\documentclass[11pt]{amsart}

\usepackage[margin=1.23in]{geometry}
\usepackage{amsmath,amssymb,amsfonts,amsthm,mathtools,mathrsfs}
\usepackage{bm}
\usepackage{enumitem}
\usepackage{booktabs}
\usepackage{multirow}
\usepackage{graphicx}
\usepackage{placeins}
\usepackage{microtype}
\usepackage{xcolor}
\usepackage[normalem]{ulem}
\usepackage[colorlinks=true,linkcolor=black,citecolor=black,urlcolor=black]{hyperref}

\allowdisplaybreaks
\numberwithin{equation}{section}

\newtheorem{theorem}{Theorem}[section]
\newtheorem{lemma}[theorem]{Lemma}
\newtheorem{proposition}[theorem]{Proposition}

\theoremstyle{definition}

\newtheorem{definition}[theorem]{Definition}
\newtheorem{numexample}{Example}[section]
\theoremstyle{remark}
\newtheorem{remark}[theorem]{Remark}

\newcommand{\R}{\mathbb{R}}
\newcommand{\G}{\mathcal{G}}

\newcommand{\Kc}{\mathscr{K}}
\newcommand{\Th}{\mathcal{T}_h}
\newcommand{\Hrel}{\mathcal{H}}
\newcommand{\Qrel}{\mathcal{Q}}

\newcommand{\dd}{\mathrm{d}}

\newcommand{\ext}{\mathrm{ext}}
\newcommand{\norm}[1]{\lVert#1\rVert}

\newif\ifcmmShowDel
\cmmShowDeltrue

\newcommand{\cmmAdd}[1]{{\color{blue}#1}}

\newcommand{\cmmNote}[1]{}

\title[Fully Discrete Multi-Entropy Stability for MHD]
{Fully Discrete Multi-Entropy Stability of High-Order Schemes
	for Compressible MHD: A Weak-to-Strong Framework}

\author[H. Cao]{Huihui Cao}
\address[H. Cao]{Shenzhen International Center for Mathematics, Southern University of Science and Technology, 1088 Xueyuan Avenue, Shenzhen 518055, Guangdong, China}
\email{caohh@sustech.edu.cn}

\author[K. Wu]{Kailiang Wu}
\address[K. Wu]{Department of Mathematics and Shenzhen International Center for Mathematics, Southern University of Science and Technology, 1088 Xueyuan Avenue, Shenzhen 518055, Guangdong, China}
\email{wukl@sustech.edu.cn}
\thanks{Corresponding author: Kailiang Wu.}

\author[C. Yuan]{Caiyou Yuan}
\address[C. Yuan]{Department of Mathematics and Shenzhen International Center for Mathematics, Southern University of Science and Technology, 1088 Xueyuan Avenue, Shenzhen 518055, Guangdong, China}
\email{yuancy@sustech.edu.cn}

\subjclass[2020]{Primary 65M08, 65M12, 65M60; Secondary 35L65, 76W05}
\keywords{compressible magnetohydrodynamics, fully discrete entropy stability, positivity preservation, locally divergence-free,  weak entropy stability,  arbitrarily high-order accuracy, general meshes}
\date{}

\begin{document}
\begin{abstract}
	We establish a fully discrete weak-to-strong (W2S) multi-entropy stability
	theory for arbitrarily high-order finite-volume and discontinuous Galerkin (DG)
	approximations of the ideal compressible magnetohydrodynamics (MHD) equations on general polytopal meshes,
	whereby a single numerical update simultaneously satisfies discrete entropy inequalities
	for any prescribed finite family of convex Harten entropy pairs. 
	Because physical entropies are defined only for positive density and pressure,
	the central analytical difficulty lies in reconciling discrete entropy stability with
	positivity preservation, the magnetic divergence constraint, and the nonconservative Godunov--Powell coupling.

	Building upon the provably positivity-preserving MHD framework of [K.~Wu and C.-W.~Shu, \emph{Numer.~Math.}~\textbf{142} (2019), 995--1047],
	we develop a weak multi-entropy analysis of the common cell-average evolution of finite-volume and DG methods. 
	Through convex decomposition and relative entropy, we establish two sufficient criteria for this weak stability in one dimension and on general polytopal meshes. 
	In multiple dimensions, compatible magnetic corrections and locally divergence-free approximations cooperatively offset the nonconservative Godunov--Powell coupling and provide the discrete cancellations essential to both positivity-preservation and entropy stability. 
	A W2S lifting then unifies positivity and entropy limiting into a single cellwise scaling limiter, achieving strong multi-entropy stability while strictly preserving cell averages and the locally divergence-free magnetic field. 
	High-order temporal accuracy follows from variable-step strong-stability-preserving multistep methods. 
	Numerical experiments confirm the theoretical findings and demonstrate computational robustness in near-vacuum, high-Mach, and strongly magnetized regimes. 
	This framework provides a rigorous, fully discrete foundation for unifying physical admissibility, magnetic divergence control, and multi-entropy stability in high-order MHD approximations.
\end{abstract}   

\maketitle

\section{Introduction}\label{sec:intro}

The ideal compressible magnetohydrodynamic (MHD) equations describe the
macroscopic dynamics of electrically conducting fluids interacting with
magnetic fields, with wide applications in astrophysics, space physics,
geophysics, and magnetic confinement fusion \cite{GoedbloedPoedts2004,Toth2000}.
In $d$ spatial dimensions ($d\in\{1,2,3\}$), the system can be written in conservative form as
\begin{equation}\label{eq:intro-ideal-MHD}
	\frac{\partial\bm U}{\partial t}
	+\sum_{i=1}^d\frac{\partial\bm F_i(\bm U)}{\partial x_i}=\bm 0.
\end{equation}
The vector of conserved variables $\bm U$ and the flux
$\bm F=(\bm F_1,\ldots,\bm F_d)$ are given by
\begin{equation}\label{eq:intro-state}
	\bm U=(\rho,\bm m^\top,\bm B^\top,E)^\top,
\end{equation}
\begin{equation}\label{eq:intro-MHD-flux}
	\bm F_i(\bm U)=
	\begin{pmatrix}
		\rho v_i\\
		v_i\bm m-B_i\bm B+p_{\rm tot}\bm e_i\\
		v_i\bm B-B_i\bm v\\
		v_i(E+p_{\rm tot})-B_i(\bm v\cdot\bm B)
	\end{pmatrix},
	\qquad i=1,\ldots,d.
\end{equation}
Here $\rho$ denotes the fluid density, $\bm m = \rho\bm v$ the momentum density, $\bm v=(v_1, v_2, v_3)^\top$
the fluid velocity, $\bm B=(B_1, B_2, B_3)^\top$ the magnetic field, and $E$ the total energy
density. Let $\bm x=(x_1,\ldots,x_d)^\top\in\R^d$, and let $\bm e_i$
denote the $i$th column of the $3\times3$ identity matrix. With the
magnetic permeability normalized to unity, the total pressure and total
energy density are defined by
\begin{equation}\label{eq:intro-pressure-energy}
	p_{\rm tot}=p+\frac{|\bm B|^2}{2},
	\qquad
	E=\rho e+\frac{\rho|\bm v|^2}{2}+\frac{|\bm B|^2}{2},
\end{equation}
where $p$ is the thermal pressure and $e$ the specific internal energy.
Throughout this paper, the system is closed by the ideal gas equation
of state (EOS),
\begin{equation}\label{eq:intro-EOS}
	p=(\gamma-1)\rho e
	=(\gamma-1)\left(E-\frac{|\bm m|^2}{2\rho}
	-\frac{|\bm B|^2}{2}\right),
\end{equation}
where $\gamma>1$ is the adiabatic index.

The mathematical structure of \eqref{eq:intro-ideal-MHD} is governed
by three interdependent properties: physical admissibility, the
divergence-free constraint, and entropy inequality. 
First, the physically admissible states form an open convex set \cite{Wu2018}
\begin{equation}\label{eq:intro-admissible-set}
	\G=\left\{\bm U = (\rho,\bm m^\top,\bm B^\top,E)^\top:\rho>0,\quad
	E-\frac{|\bm m|^2}{2\rho}-\frac{|\bm B|^2}{2}>0\right\},
\end{equation}
which is equivalent to positivity of the density and the thermal pressure under \eqref{eq:intro-EOS}.
Outside $\G$, thermodynamic quantities lose their physical meaning and the equations \eqref{eq:intro-ideal-MHD} can lose their hyperbolic character.
Second, the divergence-free constraint on the magnetic field,
\begin{equation}\label{eq:intro-divergence-free}
	\nabla\cdot\bm B:=\sum_{i=1}^d\frac{\partial B_i}{\partial x_i}=0,
\end{equation}
is an involution of \eqref{eq:intro-ideal-MHD}. Indeed, taking the
divergence of the induction equation yields $\partial_t(\nabla\cdot\bm B)=0$,
which implies that an initially divergence-free magnetic field remains solenoidal for smooth solutions.
This constraint is also closely linked to positivity:
once violated, even exact smooth solutions of the conservative
MHD system \eqref{eq:intro-ideal-MHD} may develop negative pressure \cite{WuShu2018}.
Thirdly, because nonlinear hyperbolic systems can develop discontinuities
in finite time, entropy conditions are required to select the physically relevant weak solution.
Under \eqref{eq:intro-EOS}, with appropriate nondimensionalization,
the physical entropy is defined as
\[
s=\log p-\gamma\log\rho.
\]
We consider the generalized Harten entropy pairs
\begin{equation}\label{eq:intro-harten-entropy}
	\eta_f(\bm U)=-\rho f(s),
	\qquad
	\bm q_f(\bm U)=\eta_f(\bm U)\bm v,
\end{equation}
where $f\in C^2(\R)$ satisfies
\begin{equation}\label{eq:intro-harten-conditions}
	f'(s)>0,
	\qquad
	f'(s)-\gamma f''(s)>0,
	\qquad s\in\R.
\end{equation}
These conditions ensure the strict convexity of $\eta_f$ with respect to $\bm U$ on $\G$.
The corresponding entropy family for the Euler equations was characterized in
\cite{HartenLaxLevermoreMorokoff1998}, and generalized Harten
entropies have been employed in entropy-split formulations of
ideal MHD \cite{SjogreenYee2024}.
Under divergence-free condition \eqref{eq:intro-divergence-free}, 
entropy-admissible weak solutions of the conservative MHD system obey the entropy inequality
\begin{equation}\label{eq:intro-entropy-inequality}
	\partial_t\eta_f+\nabla\cdot\bm q_f\le0,
\end{equation}
for any $f \in C^2(\R)$ satisfying \eqref{eq:intro-harten-conditions}, in the sense of distributions, with equality holding for smooth solutions.
The solenoidal constraint is essential to this balance.
If relaxed \cite{LiuShuZhang2018}, smooth admissible solutions of the conservative system instead satisfy
\begin{equation}\label{eq:intro-entropy-divergence}
	\partial_t\eta_f+\nabla\cdot\bm q_f
	=
	\frac{(\gamma-1)\rho f'(s)}{p}
	(\bm v\cdot\bm B)\nabla\cdot\bm B.
\end{equation}
The right-hand side is sign-indefinite. To restore the entropy equality
for smooth solutions and obtain a symmetric hyperbolic formulation in entropy
variables, one considers the Godunov--Powell form \cite{Godunov1972,PowellEtAl1999}
\begin{equation}\label{eq:intro-GP}
	\begin{gathered}
		\bm U_t+\nabla\cdot\bm F(\bm U)
		=-(\nabla\cdot\bm B)\bm S(\bm U),\\
		\bm S(\bm U)
		=\bigl(0,\bm B^\top,\bm v^\top,\bm v\cdot\bm B\bigr)^\top.
	\end{gathered}
\end{equation}
The nonconservative Godunov--Powell source term cancels the sign-indefinite contribution in \eqref{eq:intro-entropy-divergence}.
When $\nabla\cdot\bm B=0$, this source vanishes and \eqref{eq:intro-GP}
reduces to the conservative system \eqref{eq:intro-ideal-MHD}.
For entropy-stable formulations based on the Godunov--Powell source, the nonconservative discretization raises a tension between entropy stability and conservation.
As emphasized in \cite{ChandrashekarKlingenberg2016, LiuShuZhang2018}, ``There is still a conflict between the entropy stability which requires the non-conservative source terms, and the conservation property which is lost due to these source terms.'' 

As evidenced by \eqref{eq:intro-entropy-divergence}, discrete entropy stability in MHD requires a compatible treatment of the solenoidal constraint \eqref{eq:intro-divergence-free}. Established divergence-control strategies include projection methods \cite{BrackbillBarnes1980}, constrained transport \cite{EvansHawley1988,BalsaraSpicer1999}, and hyperbolic divergence cleaning \cite{DednerEtAl2002}. Alternatively, the constraint can be embedded directly into the discrete space: locally divergence-free (LDF) methods enforce vanishing divergence cellwise \cite{LiShu2005}, while globally divergence-free formulations additionally maintain normal trace continuity across element interfaces \cite{LiXuYakovlev2011,FuLiXu2018,BalsaraKumarChandrashekar2021}.

Because physical entropies and their dual entropy variables are defined only on $\G$, any rigorous entropy-stability theory for MHD must first guarantee positivity preservation.
Representative positivity-preserving (PP) frameworks for hyperbolic conservation laws include the Zhang--Shu approach \cite{ZhangShu2010,ZhangShu2011}, parametrized flux limiters \cite{Xu2014,XiongQiuXu2016}, and invariant-domain-preserving methods with convex limiting \cite{GuermondPopov2016,GuermondPopovTomas2019,Kuzmin2021}; see \cite{WuZhangShu2026} for a comprehensive review.
Early PP schemes for MHD include finite-volume, discontinuous Galerkin (DG), and weighted essentially nonoscillatory (WENO) methods \cite{Waagan2009,Balsara2012,ChengEtAl2013,ChristliebEtAl2015}.
Wu \cite{Wu2018} uncovered the intrinsic differential--algebraic link between the positivity of conservative MHD schemes and a discrete divergence-free condition.
Building on this discovery, Wu and Shu developed provably PP high-order schemes combining LDF spaces with a compatible discretization of the Godunov--Powell source on Cartesian meshes with Lax--Friedrichs fluxes \cite{WuShu2018} and subsequently on general polytopal meshes with Harten--Lax--van Leer (HLL)-type fluxes \cite{WuShu2019}.
The geometric quasilinearization (GQL) framework \cite{WuShuGQL2023} characterizes nonlinear admissibility sets via equivalent families of supporting hyperplanes, elucidating the differential--algebraic coupling \cite{Wu2018} and guiding the design of PP MHD schemes \cite{WuJiangShu2023,DingWu2024}.
Further developments include globally divergence-free PP DG methods \cite{YanCaoWu2025,LiuWuYuan2026}, constrained-transport and active-flux solvers \cite{PangWu2025,LiuPangAbgrallWu2025}, and structure-preserving or oscillation-eliminating formulations \cite{DaoNazarovTomas2024,LiuWu2025}.

Discrete entropy inequalities provide a rigorous foundation for nonlinear stability, motivating extensive research on entropy-stable (ES) schemes.
Building on Tadmor's foundational entropy-conservative and ES framework \cite{Tadmor1987,Tadmor2003}, high-order extensions include TeCNO schemes \cite{FjordholmMishraTadmor2012},
summation-by-parts (SBP) finite-difference schemes \cite{FisherCarpenter2013}, split-form DG spectral element methods \cite{Gassner2013}, quadrature-based DG methods with SBP operators \cite{ChenShu2017}, and DG flux-differencing formulations \cite{Chan2018}; see \cite{ChenShu2020} for a review.
Complementary developments include Abgrall's residual-distribution framework \cite{Abgrall2018} alongside fully discrete approaches based on
entropy-conservative space--time discretizations \cite{LeFlochMercierRohde2002}, relaxation Runge--Kutta methods \cite{RanochaSayyariDalcinParsaniKetcheson2020} and their locally entropy-stable extensions \cite{RanochaDalcinParsani2020}, and algebraic flux limiting \cite{KuzminHajdukRupp2022}.
For MHD, entropy stability also requires a compatible treatment of the magnetic divergence constraint.
Existing MHD constructions include entropy-conservative fluxes \cite{ChandrashekarKlingenberg2016,WintersGassner2016}, dissipation operators acting on jumps in entropy variables \cite{DerigsEtAl2017}, and entropy-consistent divergence cleaning \cite{DerigsEtAl2018}.
High-order developments include DG methods with suitable quadrature \cite{LiuShuZhang2018}, SBP-based DG spectral element methods for resistive MHD \cite{BohmEtAl2020}, Gauss collocation methods \cite{RuedaRamirezHindenlangChanGassner2023}, and entropy split formulations \cite{SjogreenYee2024}.
Further extensions comprise ES finite-volume schemes on adaptive unstructured meshes \cite{ZhangZhengFengLiu2024}, essentially oscillation-free DG methods \cite{LiuLuShu2025}, and globally divergence-free nodal DG methods for conservative ideal MHD \cite{LiuGuoJiangZhang2025}.
Efforts bridging entropy stability with positivity preservation include low-order Godunov-type and relaxation solvers \cite{Gallice2003,BouchutKlingenbergWaagan2007,BouchutKlingenbergWaagan2010,WaaganFederrathKlingenberg2011}, high-order schemes enforcing pressure positivity alongside entropy dissipation \cite{DerigsWintersGassnerWalch2016,DzanicWitherden2023}, and a nodal DG scheme combining an LDF projection, PP limiting, and an ES HLL flux for semidiscrete stability of a single entropy pair \cite{YueWuShu2026}.

Despite these advances, three fundamental challenges persist in the design of high-order ES schemes for MHD.
First, most available entropy-stability results are established only at the semidiscrete level. Standard explicit time integrators generally fail to inherit the continuous entropy decay, whereas implicit ES integrators require solving costly nonlinear algebraic systems at each time step.
Second, almost all existing ES schemes are tailored to a single entropy pair. Because the dual entropy variables and the associated numerical dissipation depend explicitly on this chosen pair, stability for one specific entropy does not automatically imply stability for any other pair under the same numerical update. For instance, the schemes in \cite{ChandrashekarKlingenberg2016,WintersGassner2016,LiuShuZhang2018} use, up to a positive constant factor, the standard physical entropy pair corresponding to $f(s)=s$ in \eqref{eq:intro-harten-entropy}:
\[
\eta_{*}(\bm U)=-\rho s,
\qquad
\bm q_{*}(\bm U)=\eta_{*}(\bm U)\bm v.
\]
Their stability proofs do not extend to $\eta_f(\bm U)=-\rho f(s)$ for general functions $f$ satisfying \eqref{eq:intro-harten-conditions}.
Third, entropy stability does not ensure physical admissibility. Many ES constructions assume $\bm U\in\G$ a priori without proving its preservation. If positivity is violated, the physical entropy and dual entropy variables are no longer defined, and the stability analysis loses its meaning. A provable PP mechanism is thus an indispensable prerequisite for any rigorous entropy-stability theory.

\begin{samepage}
These challenges motivate the central question addressed in this paper:
\begin{center}
	\emph{How can one construct uniformly high-order, provably PP schemes for compressible MHD\\*
		possessing fully discrete multi-entropy stability?}
\end{center}
\end{samepage}
Here, \emph{multi-entropy stability} refers to the simultaneous satisfaction of fully discrete entropy inequalities under a single numerical update for an arbitrary prescribed finite family $\{(\eta_{f_r},\bm q_{f_r})\}_{r=1}^{M}$ of generalized Harten entropy pairs \eqref{eq:intro-harten-entropy} satisfying \eqref{eq:intro-harten-conditions}.

To address this question, we build upon the provably PP LDF framework of Wu and Shu \cite{WuShu2019} and extend the weak-to-strong (W2S) strategy of the entropy-positivity-oscillation (EPO) framework \cite{WuEPO2026} to the compressible MHD system with nonconservative Godunov--Powell symmetrization.
In the EPO framework, a weak entropy estimate for the updated cell average governs a high-order polynomial correction that enforces fully discrete quadrature-level entropy inequalities. 
An entropy limiter for DG, similar to that in EPO \cite{WuEPO2026}, was independently developed in \cite{LiuGuoJiangSun2026}. 
Applying the EPO methodology to MHD, however, raises a major analytical challenge: the original EPO theory was developed for strictly conservative systems, whereas the provably PP, LDF discretization of MHD relies essentially on the nonconservative Godunov--Powell source in \eqref{eq:intro-GP}.
Through relative entropy estimates and discrete balance identities, we identify the precise mechanism by which discrete magnetic divergence errors enter the entropy balance: non-solenoidal fields produce sign-indefinite residual terms that undermine weak entropy stability.
We then prove that cellwise LDF approximations combined with a compatible Godunov--Powell interface discretization exactly eliminate these residuals, offsetting nonconservative errors both within element interiors and across interfaces. 
This exact cancellation enables the simultaneous attainment of magnetic divergence control, strict positivity preservation, and fully discrete multi-entropy stability within an arbitrarily high-order computational framework.

The main contributions of this paper are summarized as follows:
\begin{itemize}[leftmargin=2em,itemsep=0.5em]
	
	\item We establish two sufficient criteria, based on convex decomposition and relative entropy analysis, for the fully discrete weak multi-entropy stability of the arbitrarily high-order PP HLL schemes \cite{WuShu2019} in one dimension and on general multidimensional polytopal meshes.
	The analysis rigorously identifies the sign-indefinite residuals caused by magnetic divergence; by introducing source-corrected HLL intermediate states and compatible magnetic flux corrections, we account for the Godunov--Powell coupling, bound the corrected relative entropy flux by the relative entropy, and prove exact discrete divergence cancellation on LDF spaces under explicit CFL conditions.
	
	\item We construct an arbitrarily high-order weak-to-strong (W2S) lifting operator that converts the weak cell-average entropy estimates into strong, simultaneous entropy inequalities while guaranteeing physical admissibility at all specified points.
	Formulated as a cellwise convex scaling limiter, this operator strictly preserves cell averages, the invariant normal magnetic component in one dimension, and the cellwise LDF property in multiple dimensions.
	
	\item We extend the fully discrete ES theory to high-order temporal accuracy via variable-step strong-stability-preserving (SSP) multistep methods under compatible step-size restrictions.
	Because SSP multistep updates are convex combinations of forward Euler-type steps, performing a single W2S lifting per time step suffices for fully discrete multi-entropy stability, without requiring W2S limiting for intermediate stages.
	
	\item Numerical tests on one- and two-dimensional benchmarks confirm the theoretical properties of the schemes: the designed high-order accuracy, multi-entropy decay, and robust performance in extreme regimes involving strong shocks, low plasma beta, and near-vacuum states.
\end{itemize}

The paper is organized as follows. 
Section~\ref{sec:1d} establishes the two sufficient criteria for weak multi-entropy stability in one dimension.
Section~\ref{sec:md} extends the theory to multidimensional general meshes and establishes the exact discrete divergence cancellation.
Section~\ref{sec:lifting} constructs the high-order weak-to-strong lifting operator.
Section~\ref{sec:high-order-time} formulates the fully discrete variable-step SSP multistep time discretization.
Section~\ref{sec:numer} presents numerical experiments validating accuracy, multi-entropy stability, and robustness.
Section~\ref{sec:con} offers concluding remarks.
Appendix~\ref{app:harten} details the convexity and compatibility proofs for the generalized Harten entropy family.

Throughout, $r$ indexes the prescribed entropy family, $j$ a one-dimensional cell, and $K$ a general mesh cell; spatial and interface indices precede the entropy index.
Superscripts $-$ and $+$ denote interface traces, $0$ initial speeds or reference time steps, and $\star$ polynomials before limiting.
Gradients and Hessians are taken with respect to the conservative variables, and $Dg(\bm U)[\bm Z]$ denotes the directional derivative of $g$ along $\bm Z$.
Single bars $|\cdot|$ denote the scalar absolute value, Euclidean length of spatial vectors, or cell/face measure, while $\norm{\cdot}$ denotes the Euclidean norm on state space.

\section{Weak multi-entropy stability in one dimension}
\label{sec:1d}

In one dimension, the divergence constraint $\partial_x B_1=0$
and the induction equation $\partial_t B_1=0$ imply that the normal magnetic
field is invariant in space and time. Denoting its constant value by $B_{\rm const}$,
we restrict our analysis to the convex slice
$\G_{B_{\rm const}}:=\{\bm U\in\G:B_1=B_{\rm const}\}$.
We first introduce the notions of weak and strong entropy stability, and then
establish two sufficient criteria for the weak multi-entropy stability of forward Euler cell-average updates.

\subsection{Weak and strong entropy stability in one dimension}
\label{subsec:entropy-notions}

Following \cite{WuEPO2026}, we distinguish
an entropy inequality satisfied by the updated cell average from one satisfied by the
updated polynomial approximation. The qualifiers \emph{weak} and \emph{strong} refer
to these two discrete levels of stability, rather than the regularity
of the underlying solution.

With the entropy family fixed in Section~\ref{sec:intro}, let
$q_r(\bm U):=\bm q_r(\bm U)\cdot\bm e_1=v_1\eta_r(\bm U)$
denote the scalar entropy flux in one dimension.
By Lemma~\ref{lem:harten-convexity}, the Hessian of each $\eta_r$ is
positive definite on $\G$, so $\eta_r$ is strictly convex on
the admissible slice $\G_{B_{\rm const}}$.

On a finite uniform mesh
$\{I_j=[x_{j-1/2},x_{j+1/2}]\}_{j=1}^{N_x}$ of width
$\Delta x$, let $\bm U_j^n\in[\mathbb P^k(I_j)]^8$ denote
a conservative finite-volume reconstruction or a DG approximation,
where $k\ge0$ is the polynomial degree and $\mathbb P^k$ denotes polynomials
of degree at most $k$. We require
$B_{1,j}^n(x)\equiv B_{\rm const}$ throughout each cell.
Choose an $L$-point quadrature rule exact on $\mathbb P^k(I_j)$,
with nodes $\widehat x_j^{[\alpha]}\in I_j$ and positive weights
$\widehat\omega_\alpha>0$ normalized such that
$\sum_{\alpha=1}^L\widehat\omega_\alpha=1$.
The cell average is
\begin{equation}\label{eq:1d-cell-average}
	\overline{\bm U}_j^n
	:=\frac1{\Delta x}\int_{I_j}\bm U_j^n(x)\,\dd x
	=\sum_{\alpha=1}^L\widehat\omega_\alpha
	\bm U_j^n(\widehat x_j^{[\alpha]}).
\end{equation}
Assuming all quadrature states belong to $\G_{B_{\rm const}}$,
the discrete cell quadrature entropy is
\begin{equation}\label{eq:1d-quadrature-entropy}
	\mathcal S_{j,r}(\bm U_j^n)
	:=\sum_{\alpha=1}^L\widehat\omega_\alpha
	\eta_r(\bm U_j^n(\widehat x_j^{[\alpha]})).
\end{equation}
For a forward Euler step $\tau=t^{n+1}-t^n>0$, set
$\lambda=\tau/\Delta x$. For each entropy, we prescribe a consistent
numerical entropy flux $\widehat q_r$ satisfying
$\widehat q_r(\bm W,\bm W)=q_r(\bm W)$ for
$\bm W\in\G_{B_{\rm const}}$, with a single interface flux
$\widehat q_{j+1/2,r}$ shared by adjacent cells. These fluxes are evaluated
from the solution at time level $t^n$ and remain frozen during polynomial lifting.
Define the local entropy budget by
\begin{equation}\label{eq:1d-weak-bound}
	\mathcal B_{j,r}^n
	:=\mathcal S_{j,r}(\bm U_j^n)
	-\lambda(\widehat q_{j+1/2,r}-\widehat q_{j-1/2,r}).
\end{equation}
Subsection~\ref{subsubsec:1d-entropy-bound} specifies the HLL
entropy fluxes employed throughout.

\begin{definition}[Local weak and strong multi-entropy stability in one dimension]
	\label{def:weak-strong-entropy}
	A cell-average update is \emph{locally weakly entropy stable}
	for the prescribed entropy pair $(\eta_r,q_r)$ if, in every cell,
	\begin{equation}\label{eq:1d-weak-entropy}
		\overline{\bm U}_j^{n+1}\in\G_{B_{\rm const}},
		\qquad
		\eta_r(\overline{\bm U}_j^{n+1})\le\mathcal B_{j,r}^n.
	\end{equation}
	A polynomial update $\bm U_j^{n+1}\in[\mathbb P^k(I_j)]^8$
	sharing this cell average and satisfying
	$B_{1,j}^{n+1}\equiv B_{\rm const}$ is
	\emph{locally strongly entropy stable} for $(\eta_r,q_r)$ if
	all its quadrature states belong to $\G_{B_{\rm const}}$ and
	\begin{equation}\label{eq:1d-strong-entropy}
		\mathcal S_{j,r}(\bm U_j^{n+1})\le\mathcal B_{j,r}^n,
		\qquad 1\le j\le N_x.
	\end{equation}
	The update is \emph{weakly} or \emph{strongly multi-entropy stable}
	if the respective conditions hold simultaneously for every $1\le r\le M$, using
	the same updated solution, time step, and interface wave speeds (omitting
	the prefix \emph{multi-} when $M=1$).
	A fully discrete scheme possesses the corresponding property if it holds for every
	admissible input satisfying the magnetic and quadrature assumptions under the stated time-step restriction.
\end{definition}

\begin{remark}[Hierarchy and distinction between the two entropy levels]
	\label{rem:weak-strong-distinction}
	Owing to quadrature exactness for polynomials and the convexity of $\eta_r$, Jensen's
	inequality yields
	\begin{equation}\label{eq:1d-entropy-hierarchy}
		\eta_r(\overline{\bm U}_j^{n+1})
		\le\mathcal S_{j,r}(\bm U_j^{n+1})
		\le\mathcal B_{j,r}^n
	\end{equation}
	whenever the polynomial update is strongly stable. Thus strong stability implies
	weak stability, with the two notions coinciding for piecewise constant approximations ($k=0$).
	For higher-order polynomials ($k\ge 1$), the weak inequality bounds the entropy of the cell average
	while leaving intra-element variations uncontrolled. Furthermore, its right-hand
	side contains $\mathcal S_{j,r}(\bm U_j^n)$, comparing
	different entropy functionals across successive time levels.
	Both definitions are formulated with respect to the prescribed quadrature rule, rather than
	continuous spatial entropy integrals.
\end{remark}

\begin{remark}[Global inequalities and temporal recursion]
	\label{rem:1d-global-entropy}
	Under the corresponding admissibility requirements, the
	\emph{global weak} and \emph{global strong} entropy inequalities read
	\begin{equation}\label{eq:1d-global-weak-boundary}
		\sum_{j=1}^{N_x}\Delta x\,\eta_r(\overline{\bm U}_j^{n+1})
		\le\sum_{j=1}^{N_x}\Delta x\,\mathcal S_{j,r}(\bm U_j^n)
		-\tau(\widehat q_{N_x+1/2,r}-\widehat q_{1/2,r}),
	\end{equation}
	\begin{equation}\label{eq:1d-global-strong-boundary}
		\sum_{j=1}^{N_x}\Delta x\,\mathcal S_{j,r}(\bm U_j^{n+1})
		\le\sum_{j=1}^{N_x}\Delta x\,\mathcal S_{j,r}(\bm U_j^n)
		-\tau(\widehat q_{N_x+1/2,r}-\widehat q_{1/2,r}),
	\end{equation}
	respectively. Summing the local inequalities over all cells cancels the interior
    numerical entropy fluxes and yields these global estimates. Under periodic boundary
	conditions, the boundary flux contributions cancel and \eqref{eq:1d-global-strong-boundary} simplifies to
	\begin{equation}\label{eq:1d-global-entropy}
		\sum_{j=1}^{N_x}\Delta x\,\mathcal S_{j,r}(\bm U_j^{n+1})
		\le\sum_{j=1}^{N_x}\Delta x\,\mathcal S_{j,r}(\bm U_j^n).
	\end{equation}
	Hence consecutive strongly stable forward Euler steps on a fixed mesh
	ensure that the total discrete quadrature entropy remains bounded by its initial value.
	In contrast, the weak inequality alone does not provide such a temporal recursion,
    which is why the weak-to-strong lifting of Section~\ref{sec:lifting} is needed.
	For SSP multistep methods, \eqref{eq:time-multistep-bound} defines the local entropy budget
	and \eqref{eq:time-ms-global} provides the corresponding global bound.
	Unless explicitly qualified as global, entropy stability is understood throughout in the local sense.
\end{remark}

\subsection{Finite-volume and DG discretizations}
\label{subsec:1d-scheme-pp}
Because the weak entropy analysis operates at the level of cell averages, it applies
equally to high-order finite-volume and DG schemes that share the same forward Euler update.
Before establishing the entropy estimates, we review the positivity-preserving (PP)
framework of Wu and Shu \cite{WuShu2019}.

\subsubsection{The common cell-average update}
With the time step $\tau>0$ and mesh ratio $\lambda=\tau/\Delta x$,
finite-volume and DG discretizations advance cell averages via the common forward Euler update
\begin{equation}\label{eq:1d-mean-update}
	\overline{\bm U}_j^{n+1}
	=\overline{\bm U}_j^n-\lambda
	(\widehat{\bm F}_{1,j+1/2}-\widehat{\bm F}_{1,j-1/2}),
\end{equation}
where $\widehat{\bm F}_{1,j+1/2} :=\widehat{\bm F}_1(\bm U_{j+1/2}^-,\bm U_{j+1/2}^+)$
is evaluated from the left and right interface traces
\[
\bm U_{j+1/2}^-:=\bm U_j^n(x_{j+1/2}),
\qquad
\bm U_{j+1/2}^+:=\bm U_{j+1}^n(x_{j+1/2}).
\]
Here and below, the time index on interface traces, numerical fluxes, and wave speeds is suppressed for brevity.

We employ the HLL numerical flux \cite{HartenLaxvanLeer1983}.
At a fixed interface $x_{j+1/2}$, abbreviate $\bm U^\pm=\bm U_{j+1/2}^\pm$.
Given wave speeds $a^-<0<a^+$, the two-sided HLL flux is given by
\begin{equation}\label{eq:1d-hll-flux}
	\widehat{\bm F}_1
	=\frac{a^+\bm F_1(\bm U^-)-a^-\bm F_1(\bm U^+)
		+a^+a^-(\bm U^+-\bm U^-)}{a^+-a^-}.
\end{equation}
The associated HLL intermediate state
\begin{equation}\label{eq:1d-hll-state}
	\bm H
	=\frac{a^+\bm U^+-a^-\bm U^-
		-\bm F_1(\bm U^+)+\bm F_1(\bm U^-)}{a^+-a^-}
\end{equation}
satisfies the consistency identities
\begin{equation}\label{eq:1d-hll-identities}
	\widehat{\bm F}_1
	=\bm F_1(\bm U^-)+a^-(\bm H-\bm U^-)
	=\bm F_1(\bm U^+)+a^+(\bm H-\bm U^+).
\end{equation}
Since both interface traces satisfy $B_1=B_{\rm const}$,
the flux component $(\widehat{\bm F}_1)_{B_1}$ vanishes identically.
Hence update \eqref{eq:1d-mean-update} preserves the cell average
of the normal magnetic field: $\overline{B}_{1,j}^{n+1}=B_{\rm const}$.

\subsubsection{Positivity preservation}

To establish physical admissibility, we specialize the quadrature rule in
\eqref{eq:1d-cell-average} to an
$L$-point Gauss--Lobatto formula satisfying $L\ge2$, $2L-3\ge k$, and
boundary weights $\widehat\omega_1=\widehat\omega_L=1/[L(L-1)]$.
Denote the nodal states and the two cell-boundary traces by
\begin{equation}\label{eq:1d-quadrature}
	\begin{aligned}
		\widehat{\bm U}_j^{[\alpha]}
		&:=\bm U_j^n(\widehat x_j^{[\alpha]}),
		&\overline{\bm U}_j^n
		&=\sum_{\alpha=1}^L\widehat\omega_\alpha
		\widehat{\bm U}_j^{[\alpha]},\\
		\bm U_{j,L}
		&:=\widehat{\bm U}_j^{[1]}=\bm U_{j-1/2}^+,
		&\bm U_{j,R}
		&:=\widehat{\bm U}_j^{[L]}=\bm U_{j+1/2}^-.
	\end{aligned}
\end{equation}
When $k=0$, we set $L=2$ with identical nodal states, so that interior
quadrature contributions vanish.

Let $\alpha_l,\alpha_r,\alpha_\star$ denote the splitting bounds
of Wu and Shu \cite[Eqs.~(12)--(14)]{WuShu2019}, where directional argument $1$ denotes $\bm e_1$. Define
\begin{equation}\label{eq:1d-cell-alpha}
	\alpha_j
	:=\max\{\alpha_\star(\bm U_{j,R},\bm U_{j,L};1),
	\alpha_\star(\bm U_{j,L},\bm U_{j,R};1)\}.
\end{equation}
At each interface, we impose the strict wave-speed bounds
\begin{equation}\label{eq:1d-weak-speed-assumptions}
	a^-<\min\{0,\alpha_l(\bm U^-,\bm U^+;1)\},
	\qquad
	a^+>\max\{0,\alpha_r(\bm U^+,\bm U^-;1)\}.
\end{equation}
For admissible interface states, these bounds ensure
$\bm H\in\G_{B_{\rm const}}$
\cite[Corollary~1 and Theorem~2]{WuShu2019}.
The following result provides the corresponding
Courant--Friedrichs--Lewy (CFL) condition for cell-average positivity.

\begin{proposition}[{Positivity of the cell average \cite[Theorem~4]{WuShu2019}}]
	\label{prop:1d-pp}
	Suppose that all quadrature states in \eqref{eq:1d-quadrature}
	and all exterior boundary traces belong to $\G_{B_{\rm const}}$,
	and that \eqref{eq:1d-weak-speed-assumptions} holds at every
	interface. If
	\begin{equation}\label{eq:1d-pp-cfl}
		\lambda\max\{\alpha_j+a_{j-1/2}^+,\,
		\alpha_j-a_{j+1/2}^-\}
		<\widehat\omega_1,
		\qquad 1\le j\le N_x,
	\end{equation}
	then the forward Euler update \eqref{eq:1d-mean-update} satisfies
	$\overline{\bm U}_j^{n+1}\in\G_{B_{\rm const}}$ in every cell.
\end{proposition}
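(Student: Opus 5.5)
The plan is to follow the convex-decomposition argument of Wu and Shu \cite{WuShu2019}. We decompose the updated average $\overline{\bm U}_j^{n+1}$ into a convex combination of states that are each in $\G_{B_{\rm const}}$. Since $\G_{B_{\rm const}}$ is convex, the average then lies in it as well.

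First, we insert the Gauss--Lobatto decomposition \eqref{eq:1d-quadrature} of $\overline{\bm U}_j^n$ into \eqref{eq:1d-mean-update}. We then replace the two interface fluxes using the consistency identities \eqref{eq:1d-hll-identities}. At $x_{j+1/2}$ we use the left form
\[
\widehat{\bm F}_{1,j+1/2}=\bm F_1(\bm U_{j,R})+a^-_{j+1/2}(\bm H_{j+1/2}-\bm U_{j,R}),
\]
and at $x_{j-1/2}$ the right form
\[
\widehat{\bm F}_{1,j-1/2}=\bm F_1(\bm U_{j,L})+a^+_{j-1/2}(\bm H_{j-1/2}-\bm U_{j,L}).
\]
Because $a^-_{j+1/2}<0<a^+_{j-1/2}$, this gives
\[
\overline{\bm U}_j^{n+1}=\sum_{\alpha=2}^{L-1}\widehat\omega_\alpha\widehat{\bm U}_j^{[\alpha]}
-\lambda a^-_{j+1/2}\bm H_{j+1/2}+\lambda a^+_{j-1/2}\bm H_{j-1/2}+\bm\Xi_j .
\]
The remainder $\bm\Xi_j$ collects the boundary-node terms:
\[
\bm\Xi_j=(\widehat\omega_1-\lambda a^+_{j-1/2})\bm U_{j,L}+(\widehat\omega_L+\lambda a^-_{j+1/2})\bm U_{j,R}-\lambda\bigl(\bm F_1(\bm U_{j,R})-\bm F_1(\bm U_{j,L})\bigr).
\]
The states $\bm H_{j\pm1/2}$ lie in $\G_{B_{\rm const}}$ by \cite[Corollary~1, Theorem~2]{WuShu2019}, using \eqref{eq:1d-weak-speed-assumptions} and the admissibility of the traces, including the exterior boundary traces. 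The interior nodes are admissible by hypothesis. The weights $\widehat\omega_\alpha$ and $\pm\lambda a^\pm$ are all positive.

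It remains to show that $\bm\Xi_j$, after normalization, is admissible (or at least that the total combination is). For this we use the generalized Lax--Friedrichs splitting property \cite[Theorem~2/Lemma]{WuShu2019} encoded by $\alpha_\star$. For $\bm U,\widetilde{\bm U}\in\G$ with equal $B_1$ and $\alpha>\alpha_\star$, the combination $\bm U-\frac{1}{\alpha}\bm F_1(\bm U)+\widetilde{\bm U}+\frac1\alpha\bm F_1(\widetilde{\bm U})$ lies in $\G$. Equal $B_1$ is what removes the $\nabla\cdot\bm B$ obstruction in 1D. We will actually apply this with the weaker, $\overline{\G}$-type conclusion plus a strictly admissible surplus, taking $\alpha=\alpha_j$ from \eqref{eq:1d-cell-alpha} (one may need $\alpha$ slightly larger than $\alpha_j$).

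Accordingly, we write $\bm\Xi_j$ as a split-off piece plus a leftover:
\[
\bm\Xi_j=\lambda\alpha_j\Bigl[\bm U_{j,L}+\tfrac{1}{\alpha_j}\bm F_1(\bm U_{j,L})+\bm U_{j,R}-\tfrac1{\alpha_j}\bm F_1(\bm U_{j,R})\Bigr]
+(\widehat\omega_1-\lambda(\alpha_j+a^+_{j-1/2}))\bm U_{j,L}+(\widehat\omega_L-\lambda(\alpha_j-a^-_{j+1/2}))\bm U_{j,R}.
\]
The CFL condition \eqref{eq:1d-pp-cfl}, together with $\widehat\omega_1=\widehat\omega_L$, makes both leftover coefficients strictly positive, so the leftover terms are admissible. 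The bracket is admissible, or lies on the boundary of a set whose sum with positive admissible multiples is admissible, by the splitting property with respect to both orderings $(\bm U_{j,R},\bm U_{j,L})$ and $(\bm U_{j,L},\bm U_{j,R})$. This is why $\alpha_j$ takes the maximum over both.

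Finally, the sum of all coefficients equals $1$, which is verified by consistency of the flux differences. Convexity of $\G_{B_{\rm const}}$ (and the fact that a positive combination with at least one strictly interior admissible term is interior) then gives $\overline{\bm U}_j^{n+1}\in\G_{B_{\rm const}}$. The $B_1$ component is preserved as already noted.

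The main obstacle is the splitting step. Its nonlinear (pressure) constraint cannot be checked directly, so I would invoke the GQL/inner-product characterization of $\G$ used in \cite{WuShu2019} rather than rederive it. The strict inequalities in \eqref{eq:1d-pp-cfl} and \eqref{eq:1d-weak-speed-assumptions} absorb the case where the splitting yields only closure membership.
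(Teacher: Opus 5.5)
Your proposal is correct and takes the same route as the paper, which relies on the convex decomposition \eqref{eq:1d-pp-decomposition} from \cite[Theorem~4]{WuShu2019}: your remainder is exactly $\lambda D_j\bm\Xi_j$, and your splitting off a common $\alpha_j$-part with strictly positive leftovers on $\bm U_{j,L},\bm U_{j,R}$ is equivalent to the paper's observation that $\beta_{j,L},\beta_{j,R}>\alpha_j$. One small correction: the splitting property places $\tfrac12\bigl[\bm U-\tfrac1\alpha\bm F_1(\bm U)+\widetilde{\bm U}+\tfrac1\alpha\bm F_1(\widetilde{\bm U})\bigr]$ in $\G$, not the unnormalized sum ($\G$ is not a cone because of the $|\bm B|^2/2$ term), so your bracket enters as this half-state with weight $2\lambda\alpha_j$, and that is precisely what makes your coefficients sum to one.
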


\subsubsection{The HLL entropy flux and reference time step}
\label{subsubsec:1d-entropy-bound}
Both weak multi-entropy stability criteria rely on the HLL numerical entropy flux
and the local entropy budget \eqref{eq:1d-weak-bound}.
Setting $\eta_r^\pm:=\eta_r(\bm U^\pm)$ and
$q_r^\pm:=q_r(\bm U^\pm)$, the HLL entropy flux and its
associated intermediate entropy value are given by
\begin{equation}\label{eq:1d-hll-entropy}
	\begin{aligned}
		\widehat q_r
		=\frac{a^+q_r^--a^-q_r^++a^+a^-(\eta_r^+-\eta_r^-)}{a^+-a^-},\qquad 
		\mathscr E_r^{\rm HLL}
		=\frac{a^+\eta_r^+-a^-\eta_r^--q_r^++q_r^-}{a^+-a^-},
	\end{aligned}
\end{equation}
which satisfy the algebraic identities
\begin{equation}\label{eq:1d-hll-entropy-identities}
	\widehat q_r
	=q_r^-+a^-(\mathscr E_r^{\rm HLL}-\eta_r^-)
	=q_r^++a^+(\mathscr E_r^{\rm HLL}-\eta_r^+).
\end{equation}
The numerical flux $\widehat q_r$ is consistent with $q_r$ and single-valued across cell interfaces.

At each interface $F$, fixing a positive margin $s_F>0$, we specify the baseline speeds
\begin{equation}\label{eq:1d-initial-speeds}
	\begin{aligned}
		a^{-,0}=\min\{0,\alpha_l(\bm U^-,\bm U^+;1)\}-s_F,\qquad
		a^{+,0}=\max\{0,\alpha_r(\bm U^+,\bm U^-;1)\}+s_F,
	\end{aligned}
\end{equation}
with initial wave fan width $C_F^0:=a^{+,0}-a^{-,0}>0$.

\begin{remark}[Strict admissibility of the baseline state]
	\label{rem:1d-strict-speed-margin}
	For $\bm U^\pm\in\G_{B_{\rm const}}$, consider the HLL state
	\eqref{eq:1d-hll-state} associated with zero-margin speeds:
	\[
	a^- =\min\{0,\alpha_l(\bm U^-,\bm U^+;1)\},\qquad
	a^+ =\max\{0,\alpha_r(\bm U^+,\bm U^-;1)\}.
	\]
	By \cite[Corollary~1]{WuShu2019}, this state possesses positive density
	and nonnegative internal energy.
	Owing to the margin $s_F>0$ in \eqref{eq:1d-initial-speeds}, 
	the baseline intermediate state $\bm H^0$ is a convex
	combination of this zero-margin state and the strictly admissible
	midpoint $(\bm U^-+\bm U^+)/2$, with the latter carrying positive weight.
	The strict concavity of the internal energy ensures strictly
	positive pressure, implying that $\bm H^0\in\G_{B_{\rm const}}$
	and that all associated entropies $\eta_r(\bm H^0)$ are well-defined.
\end{remark}

Both criteria adjust these baseline speeds while maintaining
the admissibility requirement \eqref{eq:1d-weak-speed-assumptions}. Once the speeds
are chosen, the physical and entropy numerical fluxes are fixed.
For a CFL safety factor $\sigma\in(0,1)$, we define the reference time-step bound
\begin{equation}\label{eq:1d-weak-reference-step}
	\lambda_j^0
	:=\frac{\sigma\widehat\omega_1}
	{\max\{\alpha_j+a_{j-1/2}^+,\,\alpha_j-a_{j+1/2}^-\}}>0.
\end{equation}
Any common step size satisfying $0<\lambda\le\min_j\lambda_j^0$ satisfies the positivity CFL condition \eqref{eq:1d-pp-cfl} strictly. Below, we write $[z]_+:=\max\{z,0\}$.
The first criterion establishes weak stability by bounding intermediate-state
entropies within a convex decomposition, whereas the second criterion
estimates the relative entropy defect in terms of the nodal states.

\subsection{A convex-decomposition criterion}
\label{subsec:1d-weak-convex}

Substituting the HLL flux identities \eqref{eq:1d-hll-identities} into the
cell-average update \eqref{eq:1d-mean-update} and regrouping the endpoint
contributions as in \cite[Theorem~4]{WuShu2019}, we introduce the auxiliary coefficients
\begin{equation}\label{eq:1d-beta}
	\begin{aligned}
		\beta_{j,L}:=\frac{\widehat\omega_1}{\lambda}-a_{j-1/2}^+,\qquad
		\beta_{j,R}:=\frac{\widehat\omega_1}{\lambda}+a_{j+1/2}^-,\qquad
		D_j:=\beta_{j,L}+\beta_{j,R}.
	\end{aligned}
\end{equation}
With the interior splitting state defined by
\begin{equation}\label{eq:1d-interior-state}
	\bm\Xi_j
	:=\frac{\beta_{j,R}\bm U_{j,R}+\beta_{j,L}\bm U_{j,L}
		-\bm F_1(\bm U_{j,R})+\bm F_1(\bm U_{j,L})}{D_j},
\end{equation}
the cell-average update can be recast as
\begin{equation}\label{eq:1d-pp-decomposition}
	\overline{\bm U}_j^{n+1}
	=\sum_{\alpha=2}^{L-1}\widehat\omega_\alpha
	\widehat{\bm U}_j^{[\alpha]}
	+\lambda D_j\bm\Xi_j-\lambda a_{j+1/2}^-\bm H_{j+1/2}
	+\lambda a_{j-1/2}^+\bm H_{j-1/2}.
\end{equation}
Under the wave-speed bounds \eqref{eq:1d-weak-speed-assumptions} and the
CFL condition \eqref{eq:1d-pp-cfl}, we have $\beta_{j,L},\beta_{j,R}>\alpha_j$.
Consequently, all intermediate states in \eqref{eq:1d-pp-decomposition} belong to
$\G_{B_{\rm const}}$ by the splitting results of \cite[Corollaries~1--2 and Theorem~2]{WuShu2019}.
Furthermore, the expansion coefficients are strictly positive and sum to unity, since
$\lambda D_j-\lambda a_{j+1/2}^-+\lambda a_{j-1/2}^+=2\widehat\omega_1$ and $\sum_{\alpha=1}^L\widehat\omega_\alpha=1$,
confirming that \eqref{eq:1d-pp-decomposition} is a convex decomposition.

To obtain a matching decomposition for the local entropy budget,
we associate with $\bm\Xi_j$ the entropy value
\begin{equation}\label{eq:1d-weak-interior-entropy}
	\mathscr E_{j,r}^{\Xi}
	:=\frac{\beta_{j,R}\eta_r(\bm U_{j,R})
		+\beta_{j,L}\eta_r(\bm U_{j,L})
		-q_r(\bm U_{j,R})+q_r(\bm U_{j,L})}{D_j}.
\end{equation}
Using the entropy flux identities \eqref{eq:1d-hll-entropy-identities},
the local entropy budget \eqref{eq:1d-weak-bound} admits the matching representation
\begin{equation}\label{eq:1d-weak-bound-decomposition}
	\mathcal B_{j,r}^n
	=\sum_{\alpha=2}^{L-1}\widehat\omega_\alpha
	\eta_r(\widehat{\bm U}_j^{[\alpha]})
	+\lambda D_j\mathscr E_{j,r}^{\Xi}-\lambda a_{j+1/2}^-\mathscr E_{j+1/2,r}^{\rm HLL}
	+\lambda a_{j-1/2}^+\mathscr E_{j-1/2,r}^{\rm HLL}.
\end{equation}
Because \eqref{eq:1d-pp-decomposition} and \eqref{eq:1d-weak-bound-decomposition}
share identical convex weights, establishing weak entropy stability reduces to
verifying entropy inequalities for the intermediate states.

\begin{theorem}[Weak stability by convex decomposition]
	\label{thm:1d-weak-convex}
	Suppose that all quadrature states at time level $t^n$ and all exterior boundary traces
	belong to $\G_{B_{\rm const}}$. Under \eqref{eq:1d-weak-speed-assumptions} and
	\eqref{eq:1d-pp-cfl}, assume further that
	\begin{equation}\label{eq:1d-weak-state-conditions}
		\eta_r(\bm H_{j+1/2})\le\mathscr E_{j+1/2,r}^{\rm HLL},
		\qquad
		\eta_r(\bm\Xi_j)\le\mathscr E_{j,r}^{\Xi},
		\qquad 1\le r\le M.
	\end{equation}
	Then the cell-average update \eqref{eq:1d-mean-update} is weakly multi-entropy stable in
	the sense of Definition~\ref{def:weak-strong-entropy} with respect to the budgets \eqref{eq:1d-weak-bound}.
\end{theorem}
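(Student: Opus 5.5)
The plan is to combine the two matching convex representations \eqref{eq:1d-pp-decomposition} and \eqref{eq:1d-weak-bound-decomposition} with Jensen's inequality for the convex entropy $\eta_r$. Admissibility of $\overline{\bm U}_j^{n+1}$ comes directly from Proposition~\ref{prop:1d-pp}, since its hypotheses are exactly those assumed here. I would still record the convex structure explicitly, because the entropy argument needs it.

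\textbf{Step 1: weights and states.} Under \eqref{eq:1d-weak-speed-assumptions} and \eqref{eq:1d-pp-cfl}, the weights in \eqref{eq:1d-pp-decomposition} are strictly positive: $\widehat\omega_\alpha$ for $2\le\alpha\le L-1$, $\lambda D_j$, $-\lambda a^-_{j+1/2}$ and $\lambda a^+_{j-1/2}$. They sum to one because $\lambda D_j-\lambda a_{j+1/2}^-+\lambda a_{j-1/2}^+=2\widehat\omega_1$. Every state entering the decomposition lies in $\G_{B_{\rm const}}$:
\begin{itemize}
\item the interior nodes $\widehat{\bm U}_j^{[\alpha]}$ by hypothesis;
\item the HLL states $\bm H_{j\pm1/2}$ by the cited splitting results of Wu and Shu;
\item $\bm\Xi_j$ because $\beta_{j,L},\beta_{j,R}>\alpha_j$.
\end{itemize}
Hence every entropy value used below is well defined. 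The $B_1$ component of each state equals $B_{\rm const}$, since the $B_1$ components of the flux differences vanish. I would also check the degenerate case $k=0$, $L=2$, where the interior sum is empty.

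\textbf{Step 2: Jensen.} The slice $\G_{B_{\rm const}}$ is convex and $\eta_r$ is convex on it by Lemma~\ref{lem:harten-convexity}. Applying the finite Jensen inequality to \eqref{eq:1d-pp-decomposition} gives
\[
\eta_r(\overline{\bm U}_j^{n+1})\le\sum_{\alpha=2}^{L-1}\widehat\omega_\alpha\eta_r(\widehat{\bm U}_j^{[\alpha]})+\lambda D_j\eta_r(\bm\Xi_j)-\lambda a_{j+1/2}^-\eta_r(\bm H_{j+1/2})+\lambda a_{j-1/2}^+\eta_r(\bm H_{j-1/2}).
\]

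\textbf{Step 3: intermediate-state conditions.} Since the weights are nonnegative, the hypotheses \eqref{eq:1d-weak-state-conditions} let me replace $\eta_r(\bm\Xi_j)$ by $\mathscr E^{\Xi}_{j,r}$ and $\eta_r(\bm H_{j\pm1/2})$ by $\mathscr E^{\rm HLL}_{j\pm1/2,r}$. The resulting right-hand side is exactly $\mathcal B^n_{j,r}$ by \eqref{eq:1d-weak-bound-decomposition}.

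The step needing genuine care is verifying \eqref{eq:1d-weak-bound-decomposition} itself. The approach is as follows:
\begin{enumerate}
\item Write $\mathcal S_{j,r}(\bm U_j^n)$ with endpoint weights $\widehat\omega_1=\lambda(\beta_{j,L}+a^+_{j-1/2})=\lambda(\beta_{j,R}-a^-_{j+1/2})$.
\item Substitute $\widehat q_{j+1/2,r}=q_r(\bm U_{j,R})+a^-_{j+1/2}(\mathscr E^{\rm HLL}_{j+1/2,r}-\eta_r(\bm U_{j,R}))$ and $\widehat q_{j-1/2,r}=q_r(\bm U_{j,L})+a^+_{j-1/2}(\mathscr E^{\rm HLL}_{j-1/2,r}-\eta_r(\bm U_{j,L}))$, both from \eqref{eq:1d-hll-entropy-identities}.
\item Check that the endpoint terms regroup into $\lambda D_j\mathscr E^{\Xi}_{j,r}$ via \eqref{eq:1d-weak-interior-entropy}.
\end{enumerate}
This mirrors the derivation of \eqref{eq:1d-pp-decomposition} line by line. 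Since the same speeds, and hence the same weights, are used for every $r$, the conclusion holds simultaneously for all $1\le r\le M$, which is weak multi-entropy stability.
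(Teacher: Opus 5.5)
Your proposal is correct and follows the paper's proof: admissibility of $\overline{\bm U}_j^{n+1}$ from Proposition~\ref{prop:1d-pp}, Jensen's inequality applied to \eqref{eq:1d-pp-decomposition}, and substitution of \eqref{eq:1d-weak-state-conditions} into the matching representation \eqref{eq:1d-weak-bound-decomposition}. You also verify \eqref{eq:1d-weak-bound-decomposition} explicitly, using $\widehat\omega_1=\lambda(\beta_{j,L}+a^+_{j-1/2})=\lambda(\beta_{j,R}-a^-_{j+1/2})$ together with the one-sided identities \eqref{eq:1d-hll-entropy-identities}; that check is correct, and the paper simply states this representation before the theorem rather than proving it inside the proof.
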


\begin{proof}
	Admissibility of the updated cell average $\overline{\bm U}_j^{n+1}\in\G_{B_{\rm const}}$ follows from
	Proposition~\ref{prop:1d-pp}.
	Applying the convexity of $\eta_r$ to \eqref{eq:1d-pp-decomposition}
	and substituting the intermediate bounds \eqref{eq:1d-weak-state-conditions} into
	\eqref{eq:1d-weak-bound-decomposition} yields
	$\eta_r(\overline{\bm U}_j^{n+1})\le\mathcal B_{j,r}^n$
	simultaneously for all $1\le r\le M$.
\end{proof}

The intermediate state conditions \eqref{eq:1d-weak-state-conditions} can always be
satisfied with finite wave speeds and a strictly positive uniform time step.
The construction rests on an elementary convexity estimate: for any $\bm Z_0,\bm M\in\G$,
$E_{0,r},\overline\eta_r\in\R$, $C>0$, and $t\ge0$,
\begin{equation}\label{eq:1d-weak-completion-bound}
	CE_{0,r}+t\overline\eta_r
	-(C+t)\eta_r\left(\frac{C\bm Z_0+t\bm M}{C+t}\right)
	\ge C[E_{0,r}-\eta_r(\bm Z_0)]
	+t[\overline\eta_r-\eta_r(\bm M)].
\end{equation}

\begin{proposition}[Finite wave speeds and a positive common time step]
	\label{prop:1d-weak-convex-attainability}
	On a finite mesh, suppose that all quadrature states in
	\eqref{eq:1d-quadrature} and all exterior boundary traces belong
	to $\G_{B_{\rm const}}$.
	Let $\{f_r\}_{r=1}^M\subset C^2(\R)$ satisfy
	\eqref{eq:intro-harten-conditions}.
	Then there exist finite interface wave speeds
	$a_{j+1/2}^-<0<a_{j+1/2}^+$ and a threshold
	$\lambda_*^{\rm cvx}>0$ such that, for every
	$0<\lambda\le\lambda_*^{\rm cvx}$, all hypotheses of
	Theorem~\ref{thm:1d-weak-convex} hold simultaneously
	in every cell and for every $1\le r\le M$.
\end{proposition}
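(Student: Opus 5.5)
We need, for both intermediate states $\bm H_{j+1/2}$ and $\bm\Xi_j$, to verify \eqref{eq:1d-weak-state-conditions}; positivity and the CFL condition then follow from the choice of a small common $\lambda$. The plan rests on the completion bound \eqref{eq:1d-weak-completion-bound}: enlarging a wave speed (or $1/\lambda$) by $t$ replaces the baseline state by the convex combination $(C\bm Z_0+t\bm M)/(C+t)$ with a fixed strictly admissible "target" $\bm M$. The paired entropy value $(CE_{0,r}+t\overline\eta_r)/(C+t)$ is formed with $\overline\eta_r$ chosen so that $\overline\eta_r-\eta_r(\bm M)>0$. Then, for $t$ large, the positive term $t[\overline\eta_r-\eta_r(\bm M)]$ dominates the possibly negative fixed quantity $C[E_{0,r}-\eta_r(\bm Z_0)]$, uniformly over the finitely many $r$.

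\emph{Interface states.} Fix an interface, start from the baseline speeds \eqref{eq:1d-initial-speeds}, and set $a^\pm=a^{\pm,0}\pm t_F$ with $t_F\ge0$. A direct computation from \eqref{eq:1d-hll-state} gives
\[
(C_F^0+2t_F)\bm H=C_F^0\bm H^0+2t_F\,\tfrac12(\bm U^++\bm U^-).
\]
The analogous identity holds for $\mathscr E^{\rm HLL}_r$ from \eqref{eq:1d-hll-entropy}, with $\bm M=(\bm U^++\bm U^-)/2$ and $\overline\eta_r=(\eta_r^++\eta_r^-)/2$. By Remark~\ref{rem:1d-strict-speed-margin}, $\bm H^0\in\G_{B_{\rm const}}$, so $\eta_r(\bm H^0)$ is finite.

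If $\bm U^+\neq\bm U^-$, strict convexity (Lemma~\ref{lem:harten-convexity}) gives $\overline\eta_r>\eta_r(\bm M)$. Applying \eqref{eq:1d-weak-completion-bound} with $C=C_F^0$ and $t=2t_F$, the defect $\mathscr E^{\rm HLL}_r-\eta_r(\bm H)$ is nonnegative as soon as
\[
2t_F\ge \max_r C_F^0[\eta_r(\bm H^0)-\mathscr E^{0}_r]_+/[\overline\eta_r-\eta_r(\bm M)].
\]
If $\bm U^+=\bm U^-$, then $\bm H=\bm U^\pm$ and $\mathscr E^{\rm HLL}_r=\eta_r^\pm$, so the condition holds with equality. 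The enlarged speeds still satisfy \eqref{eq:1d-weak-speed-assumptions}, and they are finite because the mesh is finite. This fixes all speeds, and hence all fluxes, before $\lambda$ is chosen.

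\emph{Interior states.} With the speeds fixed, write $\mu=\widehat\omega_1/\lambda$, so that $\beta_{j,L}=\mu-a^+_{j-1/2}$ and $\beta_{j,R}=\mu+a^-_{j+1/2}$. Pick $\mu_0$ large enough that $\beta_{j,L},\beta_{j,R}>\alpha_j$, i.e.\ the CFL condition \eqref{eq:1d-pp-cfl} holds. Increasing $\mu=\mu_0+t/2$ gives
\[
D_j\bm\Xi_j=D_j^0\bm\Xi_j^0+t\,\tfrac12(\bm U_{j,L}+\bm U_{j,R}),
\]
and the same affine relation holds for $\mathscr E^\Xi_{j,r}$ via \eqref{eq:1d-weak-interior-entropy}. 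Here $\bm\Xi_j^0\in\G_{B_{\rm const}}$ by the Wu--Shu splitting results quoted before \eqref{eq:1d-pp-decomposition}. The same argument as for the interfaces then produces a threshold $t_j$: the degenerate case $\bm U_{j,L}=\bm U_{j,R}$ gives $\bm\Xi_j=\bm U_{j,L}$ with equality, and otherwise strict convexity gives a positive gap. The choice $\lambda_*^{\rm cvx}:=\min_j\widehat\omega_1/(\mu_0+t_j/2)>0$ works, because for every smaller $\lambda$ the same monotone argument applies. For $\mu\ge\mu_0+t_j/2$, the inequality \eqref{eq:1d-weak-completion-bound} keeps the defect nonnegative since the $t$-term only grows.

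\emph{Main obstacle.} The delicate point is making the affine parametrizations exact, so that \eqref{eq:1d-weak-completion-bound} applies with a fixed $\bm Z_0$ and $E_{0,r}$. For $\bm H$, the speeds must be shifted symmetrically; note that $a^+a^-$ does not appear in \eqref{eq:1d-hll-state}, so the state is genuinely affine in the shift. For $\bm\Xi$, the parametrization must be in $1/\lambda$ rather than in $\lambda$. One must also check that the baseline states are strictly admissible, so their entropies are finite, and that nothing breaks in the degenerate equal-state cases.
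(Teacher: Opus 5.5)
Your proposal is correct and is essentially the paper's proof: the symmetric shift $a^\pm=a^{\pm,0}\pm t_F$ is the paper's construction \eqref{eq:1d-weak-interface-completion} with the explicitly allowed choice $\theta_F=1/2$ and $\zeta_F=2t_F$, and your interior parametrization in $\mu=\widehat\omega_1/\lambda$ is the paper's $\delta_j(\lambda)=\widehat\omega_1(1/\lambda-1/\lambda_j^0)$, giving the same threshold $\lambda_j^{\rm cvx}=\widehat\omega_1/(\widehat\omega_1/\lambda_j^0+\zeta_j/2)$. One wording slip: each enlarged speed is finite because the Jensen gap $\overline\eta_r-\eta_r(\bm M)$ is strictly positive, not because the mesh is finite; finiteness of the mesh is what makes the minimum over cells, and hence $\lambda_*^{\rm cvx}$, positive.
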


\begin{proof}
	\emph{Interface inequalities.} Let $\bm H^0$ and
	$\mathscr E_r^{{\rm HLL},0}$ be the intermediate state and entropy value associated with the
	baseline speeds \eqref{eq:1d-initial-speeds}. Choose a parameter $\theta_F\in(0,1)$ common to all entropies
	at interface $F$, such as $\theta_F=-a^{-,0}/C_F^0$ or $\theta_F=1/2$.
	Under reversal of the interface orientation, these choices transform as $\theta_F \mapsto 1-\theta_F$.
	Define
	\[
	\begin{aligned}
		\bm M_F&=\theta_F\bm U^-+(1-\theta_F)\bm U^+,\qquad 
		\overline\eta_{F,r} =\theta_F\eta_r^-+(1-\theta_F)\eta_r^+,\\
		J_{F,r}&=\overline\eta_{F,r}-\eta_r(\bm M_F),\qquad\qquad 
		g_{F,r}^0 =C_F^0[\mathscr E_r^{{\rm HLL},0}-\eta_r(\bm H^0)].
	\end{aligned}
	\]
	For distinct traces $\bm U^-\ne\bm U^+$, set
	\begin{equation}\label{eq:1d-weak-interface-completion}
		\zeta_F=\max_{1\le r\le M}\frac{[-g_{F,r}^0]_+}{J_{F,r}},
		\qquad a^-=a^{-,0}-\theta_F\zeta_F,
		\qquad a^+=a^{+,0}+(1-\theta_F)\zeta_F.
	\end{equation}
	Because $\zeta_F\ge0$, these speed enlargements preserve the strict PP bounds \eqref{eq:1d-weak-speed-assumptions}.
	The resulting intermediate state and entropy flux value satisfy the convex mixtures
	\begin{equation}\label{eq:1d-weak-interface-mixing}
		\bm H=\frac{C_F^0\bm H^0+\zeta_F\bm M_F}{C_F^0+\zeta_F},
		\qquad
		\mathscr E_r^{\rm HLL}
		=\frac{C_F^0\mathscr E_r^{{\rm HLL},0}
			+\zeta_F\overline\eta_{F,r}}{C_F^0+\zeta_F}.
	\end{equation}
	Applying the convexity estimate \eqref{eq:1d-weak-completion-bound} gives
	\[
	(C_F^0+\zeta_F)
	[\mathscr E_r^{\rm HLL}-\eta_r(\bm H)]
	\ge g_{F,r}^0+\zeta_FJ_{F,r}\ge0.
	\]
	Since $\eta_r$ is strictly convex on $\G_{B_{\rm const}}$ and $\theta_F\in(0,1)$,
	Jensen's inequality ensures $J_{F,r}>0$ whenever $\bm U^-\ne\bm U^+$, so
	$\zeta_F$ and the adjusted speeds $a^\pm$ remain finite.
	When the traces coincide ($\bm U^-=\bm U^+$), we set $\zeta_F=0$; by
consistency the entropy inequality reduces to an equality, and no division by zero occurs.
	
	\emph{Interior inequalities.} With the interface wave speeds and numerical
	fluxes fixed, let $\lambda_j^0$ be given by
	\eqref{eq:1d-weak-reference-step}, where superscript $0$ denotes
	quantities evaluated at $\lambda_j^0$. Define
	\begin{equation}\label{eq:1d-weak-interior-gaps}
		\begin{aligned}
			\bm M_j&=\tfrac12(\bm U_{j,L}+\bm U_{j,R}),\qquad
			\overline\eta_{j,r}
			=\tfrac12[\eta_r(\bm U_{j,L})+\eta_r(\bm U_{j,R})],\\
			J_{j,r}&=\overline\eta_{j,r}-\eta_r(\bm M_j),\qquad 
			g_{j,r}^0=D_j^0[\mathscr E_{j,r}^{\Xi,0}-\eta_r(\bm\Xi_j^0)].
		\end{aligned}
	\end{equation}
	For $0<\lambda\le\lambda_j^0$ and $\nu\in\{L,R\}$,
	decompose the coefficients in \eqref{eq:1d-beta} as
	$\beta_{j,\nu}(\lambda)=\beta_{j,\nu}^0+\delta_j(\lambda)$ with
	$\delta_j(\lambda):=\widehat\omega_1(1/\lambda-1/\lambda_j^0)\ge0$.
	Substituting these relations into \eqref{eq:1d-interior-state} and
	\eqref{eq:1d-weak-interior-entropy} yields the convex representations
	\begin{equation}\label{eq:1d-weak-interior-mixing}
		\begin{aligned}
			D_j(\lambda)=D_j^0+2\delta_j(\lambda),\quad
			D_j(\lambda)\bm\Xi_j(\lambda)
			&=D_j^0\bm\Xi_j^0+2\delta_j(\lambda)\bm M_j,\\
			D_j(\lambda)\mathscr E_{j,r}^{\Xi}(\lambda)
			&=D_j^0\mathscr E_{j,r}^{\Xi,0}
			+2\delta_j(\lambda)\overline\eta_{j,r}.
		\end{aligned}
	\end{equation}
	Applying \eqref{eq:1d-weak-completion-bound} with
	$C=D_j^0$ and $t=2\delta_j(\lambda)$ yields
	\begin{equation}\label{eq:1d-weak-interior-defect-bound}
		D_j(\lambda)[\mathscr E_{j,r}^{\Xi}(\lambda)
		-\eta_r(\bm\Xi_j(\lambda))]
		\ge g_{j,r}^0+2\delta_j(\lambda)J_{j,r}.
	\end{equation}
	For distinct endpoint states ($\bm U_{j,L}\ne\bm U_{j,R}$), strict convexity ensures $J_{j,r}>0$.
	We then set
	\begin{equation}\label{eq:1d-weak-interior-completion}
		\zeta_j:=\max_{1\le r\le M}\frac{[-g_{j,r}^0]_+}{J_{j,r}},\qquad
		\lambda_j^{\rm cvx}
		:=\frac{\widehat\omega_1}
		{\widehat\omega_1/\lambda_j^0+\zeta_j/2}>0.
	\end{equation}
	Consequently, for every $0<\lambda\le\lambda_j^{\rm cvx}$, we have
	$2\delta_j(\lambda)\ge\zeta_j$, so that
	\eqref{eq:1d-weak-interior-defect-bound} guarantees
	$\eta_r(\bm\Xi_j(\lambda))
	\le\mathscr E_{j,r}^{\Xi}(\lambda)$ simultaneously for all $1\le r\le M$.
	If the endpoint states coincide, the interior state reduces to $\bm\Xi_j=\bm U_{j,L}$ and
	$\mathscr E_{j,r}^{\Xi}=\eta_r(\bm U_{j,L})$; we then set
	$\zeta_j=0$ and $\lambda_j^{\rm cvx}=\lambda_j^0$.
	Because the mesh and entropy family are finite, the positive threshold
	\begin{equation}\label{eq:1d-weak-convex-cfl}
		\lambda_*^{\rm cvx}:=\min_{1\le j\le N_x}\lambda_j^{\rm cvx}>0
	\end{equation}
	provides a nonempty interval of uniform time steps $0<\lambda\le\lambda_*^{\rm cvx}$ satisfying both the
	positivity CFL condition and all interior entropy inequalities. The interface
	inequalities remain valid because the wave speeds are held fixed, which concludes the proof.
\end{proof}

\subsection{A criterion based on relative-entropy rates}
\label{subsec:1d-weak-relative}

Unlike the convex-decomposition approach, the second criterion directly estimates
the cell-average entropy defect without requiring individual entropy bounds on the
intermediate states.
Given admissible initial data and interface wave speeds satisfying
\eqref{eq:1d-weak-speed-assumptions}, we first establish an exact algebraic identity
for the entropy defect, and then construct wave speeds that rigorously control its
boundary contributions, keeping the numerical fluxes fixed as the time step varies.

\subsubsection{An exact identity for the entropy defect}
\label{subsubsec:1d-relative-defect}

For $\bm U,\bm W\in\G_{B_{\rm const}}$, we define the entropy
variables, entropy potential, and relative entropy (or Bregman divergence) by
\begin{equation}\label{eq:1d-weak-relative-quantities}
	\begin{aligned}
		\bm V_r(\bm U)&:=\nabla\eta_r(\bm U),\qquad 
		\psi_r(\bm U) :=\bm V_r(\bm U)^\top\bm F_1(\bm U)-q_r(\bm U),\\
		\Hrel_r(\bm U\mid\bm W)
		&:=\eta_r(\bm U)-\eta_r(\bm W)
		-\bm V_r(\bm W)^\top(\bm U-\bm W).
	\end{aligned}
\end{equation}
Here the gradient is evaluated with respect to all eight conserved
variables on $\G$. By the strict convexity of $\eta_r$ on
$\G_{B_{\rm const}}$, one has $\Hrel_r(\bm U\mid\bm W)\ge0$,
with equality holding if and only if $\bm U=\bm W$.
For any reference state $\bm W\in\G_{B_{\rm const}}$, we define the boundary defect terms
\begin{equation}\label{eq:1d-weak-side-defects}
	\begin{aligned}
		P_{j,L,r}(\bm W)
		&:=-\psi_r(\bm W)+\bm V_r(\bm W)^\top
		\widehat{\bm F}_{1,j-1/2}
		-\widehat q_{j-1/2,r},\\
		P_{j,R,r}(\bm W)
		&:=\psi_r(\bm W)-\bm V_r(\bm W)^\top
		\widehat{\bm F}_{1,j+1/2}
		+\widehat q_{j+1/2,r}.
	\end{aligned}
\end{equation}

For any time step satisfying the positivity CFL condition \eqref{eq:1d-pp-cfl},
the updated cell average satisfies $\overline{\bm U}_j^{n+1}\in\G_{B_{\rm const}}$.
Setting $\bm W=\overline{\bm U}_j^{n+1}$, the quadrature representation of $\overline{\bm U}_j^n$
and update \eqref{eq:1d-mean-update} yield the exact defect identity
\begin{equation}\label{eq:1d-weak-defect-identity}
	\mathcal B_{j,r}^n-\eta_r(\bm W)
	=\sum_{\alpha=1}^L\widehat\omega_\alpha
	\Hrel_r(\widehat{\bm U}_j^{[\alpha]}\mid\bm W)
	-\lambda\bigl[P_{j,L,r}(\bm W)+P_{j,R,r}(\bm W)\bigr].
\end{equation}
Indeed, expanding the relative entropies on the right-hand side of
\eqref{eq:1d-weak-defect-identity} and substituting
$\overline{\bm U}_j^n-\bm W
=\lambda(\widehat{\bm F}_{1,j+1/2}-\widehat{\bm F}_{1,j-1/2})$,
the entropy potential terms $\pm\psi_r(\bm W)$ cancel identically.
Verifying weak entropy stability therefore reduces to controlling the boundary
defects $P_{j,L,r}(\bm W)$ and $P_{j,R,r}(\bm W)$ via the relative entropies of the
corresponding cell boundary traces.

\subsubsection{Relative-entropy control of the boundary terms}
\label{subsubsec:1d-relative-control}

For fixed wave speeds, let $\lambda_j^0$ be the reference time-step bound
in \eqref{eq:1d-weak-reference-step}. Consider the parameterized update trajectory
and its compact image
\begin{equation}\label{eq:1d-weak-update-path}
	\begin{aligned}
		\bm W_j(\vartheta)
		:=\overline{\bm U}_j^n
		-\vartheta(\widehat{\bm F}_{1,j+1/2}
		-\widehat{\bm F}_{1,j-1/2}),\qquad 
		\mathscr W_j
		:=\{\bm W_j(\vartheta):0\le\vartheta\le\lambda_j^0\}.
	\end{aligned}
\end{equation}
By Proposition~\ref{prop:1d-pp} and the convexity of $\G_{B_{\rm const}}$, $\mathscr W_j$
is a compact line segment contained in $\G_{B_{\rm const}}$, encompassing every updated cell average
$\overline{\bm U}_j^{n+1}$ for $0<\lambda\le\lambda_j^0$.

Suppose there exist finite nonnegative rates $\mathfrak c_{j,\nu,r}$ such that
\begin{equation}\label{eq:1d-weak-rate-bound}
	P_{j,\nu,r}(\bm W)
	\le\mathfrak c_{j,\nu,r}\Hrel_r(\bm U_{j,\nu}\mid\bm W),
	\qquad \bm W\in\mathscr W_j,\quad \nu\in\{L,R\}.
\end{equation}
Set $\mathfrak c_{j,\nu}:=\max_{1\le r\le M}\mathfrak c_{j,\nu,r}$.
Since $\widehat\omega_L=\widehat\omega_1$, substituting \eqref{eq:1d-weak-rate-bound} into
\eqref{eq:1d-weak-defect-identity} with $\bm W=\overline{\bm U}_j^{n+1}$ yields
\begin{equation}\label{eq:1d-weak-controlled-defect}
	\mathcal B_{j,r}^n-\eta_r(\bm W)
	\ge\sum_{\alpha=2}^{L-1}\widehat\omega_\alpha
	\Hrel_r(\widehat{\bm U}_j^{[\alpha]}\mid\bm W)+\sum_{\nu\in\{L,R\}}
	(\widehat\omega_1-\lambda\mathfrak c_{j,\nu,r})
	\Hrel_r(\bm U_{j,\nu}\mid\bm W).
\end{equation}
The weak entropy inequalities $\eta_r(\overline{\bm U}_j^{n+1})\le\mathcal B_{j,r}^n$
follow immediately for all $1\le r\le M$ provided the time step satisfies
\begin{equation}\label{eq:1d-weak-relative-cfl}
	0<\lambda\le\min_j\lambda_j^0, \qquad 
	\lambda\mathfrak c_{j,L}\le\widehat\omega_1,
	\qquad
	\lambda\mathfrak c_{j,R}\le\widehat\omega_1
	\quad\text{for every }j.
\end{equation}

\subsubsection{Finite relative-entropy rates and wave-speed selection}
\label{subsubsec:1d-relative-finiteness}

We define the optimal rates by
\begin{equation}\label{eq:1d-weak-relative-rates}
	\mathfrak c_{j,\nu,r}
	:=\sup_{\substack{\bm W\in\mathscr W_j\\\bm W\ne\bm U_{j,\nu}}}
	\frac{[P_{j,\nu,r}(\bm W)]_+}
	{\Hrel_r(\bm U_{j,\nu}\mid\bm W)},
	\qquad \nu\in\{L,R\},
\end{equation}
with the supremum understood as zero if the set is empty.
Because $\mathscr W_j$ is compact and $\Hrel_r(\bm U_{j,\nu}\mid\bm W)>0$ for $\bm W\ne\bm U_{j,\nu}$,
the quotient in \eqref{eq:1d-weak-relative-rates} can become unbounded only as
$\bm W\to\bm U_{j,\nu}$. Furthermore, if the trace $\bm U_{j,\nu}$ belongs to $\mathscr W_j$,
bound \eqref{eq:1d-weak-rate-bound} also requires the sign condition $P_{j,\nu,r}(\bm U_{j,\nu})\le0$.

At each interface, we introduce the one-sided entropy dissipations
\begin{equation}\label{eq:1d-weak-one-sided-dissipation}
	\begin{aligned}
		d_r^-:=\bm V_r(\bm U^-)^\top\widehat{\bm F}_1
		-\widehat q_r-\psi_r(\bm U^-),\qquad 
		d_r^+ :=\psi_r(\bm U^+)
		-\bm V_r(\bm U^+)^\top\widehat{\bm F}_1+\widehat q_r.
	\end{aligned}
\end{equation}
The boundary defect terms at the cell-boundary traces satisfy
\begin{equation}\label{eq:1d-weak-trace-defects}
	P_{j,L,r}(\bm U_{j,L})=-d_{j-1/2,r}^+,
	\qquad
	P_{j,R,r}(\bm U_{j,R})=-d_{j+1/2,r}^-.
\end{equation}
When the one-sided dissipation is strictly positive, continuity ensures that
$[P_{j,\nu,r}(\bm W)]_+$ vanishes identically in a neighborhood of the trace.
If the dissipation vanishes, however, a first-order linear term in the Taylor expansion of $P_{j,\nu,r}$
could obstruct quadratic relative-entropy control. This linear term vanishes
whenever the numerical state and entropy fluxes coincide with the physical fluxes at that trace.
We therefore impose, at each interface and for each entropy, the alternative condition
\begin{equation}\label{eq:1d-weak-side-condition}
	\begin{gathered}
		d_r^->0\quad\text{or}\quad
		(\widehat{\bm F}_1,\widehat q_r)
		=(\bm F_1(\bm U^-),q_r(\bm U^-)),\\
		d_r^+>0\quad\text{or}\quad
		(\widehat{\bm F}_1,\widehat q_r)
		=(\bm F_1(\bm U^+),q_r(\bm U^+)).
	\end{gathered}
\end{equation}

To analyze the case where numerical fluxes equal physical fluxes, define the relative entropy flux
\begin{equation}\label{eq:1d-weak-relative-entropy-flux}
	\Qrel_r(\bm U\mid\bm W)
	:=q_r(\bm U)-q_r(\bm W)
	-\bm V_r(\bm W)^\top[\bm F_1(\bm U)-\bm F_1(\bm W)].
\end{equation}
On the constant-$B_1$ slice $\G_{B_{\rm const}}$, direct differentiation of
\eqref{eq:intro-harten-entropy} yields the compatibility identity
\begin{equation}\label{eq:1d-compatibility}
	Dq_r(\bm U)[\bm Z]
	=\bm V_r(\bm U)^\top D\bm F_1(\bm U)[\bm Z],
	\qquad z_{B_1}=0,
	\quad \bm U\in\G_{B_{\rm const}},
\end{equation}
for every tangent vector $\bm Z\in\R^8$ with vanishing magnetic component $z_{B_1}=0$, and for all $r$.
By Lemma~\ref{lem:harten-convexity}, $D^2\eta_r$ is positive definite on this tangent space.
Thus, on each compact convex subset of $\G_{B_{\rm const}}$, Taylor expansions ensure the existence
of constants $c_r,C_r>0$ such that
\begin{equation}\label{eq:1d-weak-quadratic-bounds}
	\Hrel_r(\bm U\mid\bm W)\ge c_r\norm{\bm U-\bm W}^2,
	\qquad
	|\Qrel_r(\bm U\mid\bm W)|\le C_r\norm{\bm U-\bm W}^2,
\end{equation}
where compatibility relation \eqref{eq:1d-compatibility} eliminates the first-order linear term in $\Qrel_r$.

\begin{lemma}[Finiteness of the relative-entropy rates]
	\label{lem:1d-weak-finite-rates}
	For fixed wave speeds satisfying
	\eqref{eq:1d-weak-speed-assumptions} and
	\eqref{eq:1d-weak-side-condition}, the rates defined in
	\eqref{eq:1d-weak-relative-rates} are finite and satisfy
	\eqref{eq:1d-weak-rate-bound} for every $\bm W\in\mathscr W_j$.
\end{lemma}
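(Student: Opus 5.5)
The plan is to show, for each fixed $j$, $\nu\in\{L,R\}$ and $r$, that the quotient $[P_{j,\nu,r}(\bm W)]_+/\Hrel_r(\bm U_{j,\nu}\mid\bm W)$ is bounded on $\mathscr W_j\setminus\{\bm U_{j,\nu}\}$. Once the supremum in \eqref{eq:1d-weak-relative-rates} is finite, \eqref{eq:1d-weak-rate-bound} holds by definition for every $\bm W\ne\bm U_{j,\nu}$. The remaining case $\bm W=\bm U_{j,\nu}$, which is relevant only if the trace lies on the segment, requires $P_{j,\nu,r}(\bm U_{j,\nu})\le0$. By \eqref{eq:1d-weak-trace-defects} this value equals $-d^\pm\le0$ in the first alternative of \eqref{eq:1d-weak-side-condition}. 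In the second alternative it equals $0$, since then $\psi_r-\bm V_r^\top\bm F_1+q_r=0$ at the trace by the definition of $\psi_r$.

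\emph{Localization.} The map $\bm W\mapsto P_{j,\nu,r}(\bm W)$ is continuous on $\mathscr W_j\subset\G_{B_{\rm const}}$, because $\bm V_r$ and $\psi_r$ are smooth on $\G$ and the numerical fluxes are frozen. Likewise, $\Hrel_r(\bm U_{j,\nu}\mid\bm W)$ is continuous in $\bm W$ and strictly positive for $\bm W\ne\bm U_{j,\nu}$, by strict convexity on the slice (both states have $B_1=B_{\rm const}$). Fix $\epsilon>0$. On the compact set $\mathscr W_j\cap\{\norm{\bm W-\bm U_{j,\nu}}\ge\epsilon\}$ the denominator attains a positive minimum and the numerator is bounded, so the quotient is bounded there. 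It therefore suffices to control a neighborhood of $\bm U_{j,\nu}$. This is needed only if $\bm U_{j,\nu}$ is an accumulation point of $\mathscr W_j$, i.e. lies in the closed segment $\mathscr W_j$; otherwise the distance is already bounded below and we are done.

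\emph{Near the trace.} Take $\nu=R$; the case $\nu=L$ is symmetric, with $d^+$ in place of $d^-$. In the first alternative, $P_{j,R,r}(\bm U_{j,R})=-d^-_{j+1/2,r}<0$, so by continuity $P_{j,R,r}<0$ in a neighborhood and the numerator $[P]_+$ vanishes there. In the second alternative, $\widehat{\bm F}_{1,j+1/2}=\bm F_1(\bm U_{j,R})$ and $\widehat q_{j+1/2,r}=q_r(\bm U_{j,R})$. Expanding $\psi_r(\bm W)=\bm V_r(\bm W)^\top\bm F_1(\bm W)-q_r(\bm W)$ then gives the key algebraic step
\[
P_{j,R,r}(\bm W)=q_r(\bm U_{j,R})-q_r(\bm W)-\bm V_r(\bm W)^\top[\bm F_1(\bm U_{j,R})-\bm F_1(\bm W)]=\Qrel_r(\bm U_{j,R}\mid\bm W).
\]
For $\nu=L$ one similarly obtains $P_{j,L,r}=-\Qrel_r(\bm U_{j,L}\mid\bm W)$. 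Now apply \eqref{eq:1d-weak-quadratic-bounds} on a compact convex subset of $\G_{B_{\rm const}}$ containing $\mathscr W_j$ and the trace, for instance the convex hull of $\mathscr W_j\cup\{\bm U_{j,\nu}\}$, which is compact and lies in the convex set $\G_{B_{\rm const}}$. This yields $|P|\le C_r\norm{\bm U_{j,\nu}-\bm W}^2\le (C_r/c_r)\Hrel_r(\bm U_{j,\nu}\mid\bm W)$, so the quotient is bounded by $C_r/c_r$.

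The main obstacle is this last step. One must justify the quadratic bound on $\Qrel_r$, where the first-order term vanishes by \eqref{eq:1d-compatibility}, which applies because $\bm U-\bm W$ has zero $B_1$ component. One must also choose the compact convex set so that the Taylor constants are uniform. Everything else is compactness and continuity. Taking the maximum over $r$ and $\nu$ of the finite bounds then completes the argument.
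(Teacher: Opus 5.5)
Your proposal is correct and follows the paper's proof essentially step for step. You treat the strict-dissipation case by continuity near the trace, and reduce the flux-equality case to $P_{j,R,r}(\bm W)=\Qrel_r(\bm U_{j,R}\mid\bm W)$ or $P_{j,L,r}(\bm W)=-\Qrel_r(\bm U_{j,L}\mid\bm W)$, bounded by $C_r/c_r$ via the quadratic estimates on the convex hull of $\mathscr W_j\cup\{\bm U_{j,\nu}\}$, with compactness away from the trace and the sign condition $P_{j,\nu,r}(\bm U_{j,\nu})\le0$ at the trace itself — exactly as the paper does.
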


\begin{proof}
	If the corresponding one-sided dissipation is strictly positive,
	\eqref{eq:1d-weak-trace-defects} and continuity ensure that
	$[P_{j,\nu,r}(\bm W)]_+$ vanishes identically in a neighborhood of $\bm U_{j,\nu}$.
	Under the alternative flux equality in \eqref{eq:1d-weak-side-condition},
	substitution into \eqref{eq:1d-weak-side-defects} yields
	\[
	P_{j,L,r}(\bm W)=-\Qrel_r(\bm U_{j,L}\mid\bm W),
	\qquad\text{or}\qquad
	P_{j,R,r}(\bm W)=\Qrel_r(\bm U_{j,R}\mid\bm W).
	\]
	Applying the quadratic bounds \eqref{eq:1d-weak-quadratic-bounds} on the compact convex hull
	of $\mathscr W_j\cup\{\bm U_{j,\nu}\}$, we deduce that the quotient in \eqref{eq:1d-weak-relative-rates}
	is uniformly bounded by $C_r/c_r$ as $\bm W\to\bm U_{j,\nu}$.
	Away from the trace, boundedness is guaranteed by compactness.
	Hence the rates are finite and their definition implies \eqref{eq:1d-weak-rate-bound} for
	$\bm W\ne\bm U_{j,\nu}$. At the trace point itself, the estimate holds trivially since
	$P_{j,\nu,r}(\bm U_{j,\nu})\le0$ and $\Hrel_r(\bm U_{j,\nu}\mid\bm U_{j,\nu})=0$.
\end{proof}

We now construct interface wave speeds that satisfy
\eqref{eq:1d-weak-side-condition} while preserving the strict positivity bounds \eqref{eq:1d-weak-speed-assumptions}.
For distinct interface traces ($\bm U^-\ne\bm U^+$), define the directional relative wave speeds
\begin{equation}\label{eq:1d-weak-interface-ratios}
	\ell_r^+
	:=\frac{\Qrel_r(\bm U^+\mid\bm U^-)}
	{\Hrel_r(\bm U^+\mid\bm U^-)},
	\qquad
	\ell_r^-
	:=\frac{\Qrel_r(\bm U^-\mid\bm U^+)}
	{\Hrel_r(\bm U^-\mid\bm U^+)}.
\end{equation}
Substituting the HLL numerical state and entropy fluxes into \eqref{eq:1d-weak-one-sided-dissipation}
yields the factorization
\begin{equation}\label{eq:1d-weak-dissipation-factors}
	\begin{aligned}
		d_r^-=\frac{-a^-}{a^+-a^-}(a^+-\ell_r^+)
		\Hrel_r(\bm U^+\mid\bm U^-),\qquad 
		d_r^+=\frac{a^+}{a^+-a^-}(\ell_r^--a^-)
		\Hrel_r(\bm U^-\mid\bm U^+).
	\end{aligned}
\end{equation}
Enforcing $a^-<\ell_r^-$ and $a^+>\ell_r^+$ guarantees $d_r^\pm>0$ for every $1\le r\le M$.
Starting from the baseline speeds in \eqref{eq:1d-initial-speeds}, set
\begin{equation}\label{eq:1d-weak-speed-envelope-data}
	\Lambda_-:=\min_{1\le r\le M}\ell_r^-,
	\qquad
	\Lambda_+:=\max_{1\le r\le M}\ell_r^+,
\end{equation}
and define the final speeds via the hyperbolic envelopes
\begin{equation}\label{eq:1d-weak-relative-speeds}
	\begin{aligned}
		a^-:=\frac{a^{-,0}+\Lambda_-
			-\sqrt{(a^{-,0}-\Lambda_-)^2+(C_F^0)^2}}{2},\quad 
		a^+:=\frac{a^{+,0}+\Lambda_+
			+\sqrt{(a^{+,0}-\Lambda_+)^2+(C_F^0)^2}}{2}.
	\end{aligned}
\end{equation}
Because $\sqrt{z^2+(C_F^0)^2}>|z|$ for any $z\in\R$ and $C_F^0>0$, we immediately obtain
\[
a^-<\min\{a^{-,0},\Lambda_-\},
\qquad
a^+>\max\{a^{+,0},\Lambda_+\}.
\]
Consequently, the positivity bounds \eqref{eq:1d-weak-speed-assumptions} are preserved,
and \eqref{eq:1d-weak-dissipation-factors} yields $d_r^\pm>0$, with $C_F^0$ providing a strictly positive separation margin.
By \eqref{eq:1d-weak-quadratic-bounds}, the ratios $\ell_r^\pm$ remain uniformly bounded on
compact subsets of $\G_{B_{\rm const}}$ as $\bm U^-\to\bm U^+$. The selected speeds are therefore
finite for any fixed admissible data and finite entropy family.
When traces coincide ($\bm U^-=\bm U^+$), we set $a^-:=a^{-,0}$ and $a^+:=a^{+,0}$;
consistency then yields $d_r^\pm=0$ and the alternative in \eqref{eq:1d-weak-side-condition} holds.

With the interface speeds satisfying \eqref{eq:1d-weak-speed-assumptions} and
\eqref{eq:1d-weak-side-condition} held fixed across the mesh, define the critical CFL bound
\begin{equation}\label{eq:1d-weak-relative-step-bound}
	\lambda_*^{\rm rel}
	:=\min_{1\le j\le N_x}\left\{\lambda_j^0,
	\frac{\widehat\omega_1}{\mathfrak c_{j,L}},
	\frac{\widehat\omega_1}{\mathfrak c_{j,R}}\right\},
\end{equation}
with division by zero interpreted as $+\infty$.
Since $\lambda_j^0>0$ and the rates are finite by Lemma~\ref{lem:1d-weak-finite-rates},
the threshold $\lambda_*^{\rm rel}$ is strictly positive on any finite mesh.
Every time step satisfying $0<\lambda\le\lambda_*^{\rm rel}$ complies with \eqref{eq:1d-weak-relative-cfl}.
Computing this threshold directly requires certified upper bounds for the suprema in
\eqref{eq:1d-weak-relative-rates}. Alternatively, keeping the selected fluxes fixed and halving
a trial step until \eqref{eq:1d-weak-entropy} is satisfied gives a practical procedure that
terminates in exact arithmetic because $\lambda_*^{\rm rel}>0$.

\begin{theorem}[Weak stability by relative-entropy rates]
	\label{thm:1d-weak-relative}
	Suppose that all quadrature states at time level $t^n$ and all exterior boundary traces
	belong to $\G_{B_{\rm const}}$, and that the interface speeds satisfy
	\eqref{eq:1d-weak-speed-assumptions} and \eqref{eq:1d-weak-side-condition}.
	Then the cell-average update \eqref{eq:1d-mean-update} is weakly multi-entropy
	stable in the sense of Definition~\ref{def:weak-strong-entropy}
	for every $0<\lambda\le\lambda_*^{\rm rel}$, where
	$\lambda_*^{\rm rel}$ is defined by \eqref{eq:1d-weak-relative-step-bound}.
\end{theorem}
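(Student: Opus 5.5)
The proof assembles ingredients already established in this subsection; essentially all the work is in Lemma~\ref{lem:1d-weak-finite-rates}. Fix $0<\lambda\le\lambda_*^{\rm rel}$. Since $\lambda_*^{\rm rel}\le\min_j\lambda_j^0$ and $\lambda_j^0$ satisfies the positivity CFL condition \eqref{eq:1d-pp-cfl} strictly, Proposition~\ref{prop:1d-pp} gives $\overline{\bm U}_j^{n+1}\in\G_{B_{\rm const}}$ in every cell. The same wave speeds and fluxes are used for all $r$, and the normal magnetic average is preserved by the remark following \eqref{eq:1d-hll-identities}. This settles the admissibility part of \eqref{eq:1d-weak-entropy}.

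Next, set $\bm W=\overline{\bm U}_j^{n+1}=\bm W_j(\lambda)$, which lies in $\mathscr W_j$ because $\lambda\le\lambda_j^0$. I would invoke the exact defect identity \eqref{eq:1d-weak-defect-identity}; its algebra follows from expanding the Bregman terms, using $\sum_\alpha\widehat\omega_\alpha\widehat{\bm U}_j^{[\alpha]}=\overline{\bm U}_j^n$ and the update \eqref{eq:1d-mean-update}, with the $\pm\psi_r(\bm W)$ terms cancelling. Under \eqref{eq:1d-weak-speed-assumptions} and \eqref{eq:1d-weak-side-condition}, Lemma~\ref{lem:1d-weak-finite-rates} supplies finite rates $\mathfrak c_{j,\nu,r}$ satisfying \eqref{eq:1d-weak-rate-bound} on all of $\mathscr W_j$, and in particular at this $\bm W$. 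Substituting these bounds, together with $\widehat\omega_L=\widehat\omega_1$, gives the lower bound \eqref{eq:1d-weak-controlled-defect}.

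Finally, the definition \eqref{eq:1d-weak-relative-step-bound} ensures $\lambda\mathfrak c_{j,\nu,r}\le\lambda\mathfrak c_{j,\nu}\le\widehat\omega_1$ for $\nu\in\{L,R\}$; when $\mathfrak c_{j,\nu}=0$ this holds trivially. Every coefficient on the right of \eqref{eq:1d-weak-controlled-defect} is therefore nonnegative, and the relative entropies are nonnegative by convexity. Hence $\eta_r(\overline{\bm U}_j^{n+1})\le\mathcal B_{j,r}^n$ for every $j$ and every $1\le r\le M$, with a single update, time step, and set of speeds, which is weak multi-entropy stability.

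The $k=0$ case needs one check: with $L=2$ and identical nodes the interior sum is empty, $\bm U_{j,L}=\bm U_{j,R}$, and the argument goes through unchanged. The only genuinely delicate point is finiteness of the rates near $\bm W\to\bm U_{j,\nu}$, which the Lemma already handles. The remaining care is to confirm that $\lambda_*^{\rm rel}>0$ when some $\mathfrak c_{j,\nu}$ vanish; this follows from the $+\infty$ convention in \eqref{eq:1d-weak-relative-step-bound}.
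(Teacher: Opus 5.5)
Your proposal is correct and follows essentially the same route as the paper. It places $\overline{\bm U}_j^{n+1}$ in $\mathscr W_j\subset\G_{B_{\rm const}}$ via Proposition~\ref{prop:1d-pp} and $\lambda\le\lambda_j^0$, then inserts the rate bounds of Lemma~\ref{lem:1d-weak-finite-rates} into the exact defect identity \eqref{eq:1d-weak-defect-identity} to obtain \eqref{eq:1d-weak-controlled-defect}, whose coefficients are nonnegative by the definition of $\lambda_*^{\rm rel}$; your extra checks on the $k=0$ case and the $+\infty$ convention are harmless additions.
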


\begin{proof}
	By Proposition~\ref{prop:1d-pp} and the reference-step bound, the updated cell average satisfies
	$\overline{\bm U}_j^{n+1}=\bm W_j(\lambda)\in\mathscr W_j\subset\G_{B_{\rm const}}$.
	Lemma~\ref{lem:1d-weak-finite-rates} provides the boundary estimates \eqref{eq:1d-weak-rate-bound}.
	For every $\lambda\le\lambda_*^{\rm rel}$, all coefficients in \eqref{eq:1d-weak-controlled-defect}
	are nonnegative, establishing \eqref{eq:1d-weak-entropy} simultaneously for all $1\le r\le M$.
\end{proof}

\subsection{Comparison of the two criteria}
\label{subsec:1d-weak-comparison}

Although both criteria control the same cell-average entropy defect
$\mathcal B_{j,r}^n-\eta_r(\overline{\bm U}_j^{n+1})$, they do so by different mechanisms. The identities below relate their interface conditions and bring out the structural difference between their cellwise estimates.

At a given interface $F$, define the intermediate entropy gap
$e_{F,r}:=\mathscr E_r^{\rm HLL}-\eta_r(\bm H)$.
Combining the HLL state identities \eqref{eq:1d-hll-identities} with the
entropy flux identities \eqref{eq:1d-hll-entropy-identities}, direct algebraic manipulation
yields the interface bridge relations
\begin{equation}\label{eq:1d-weak-interface-bridge}
	\begin{aligned}
		d_r^-=(-a^-)\bigl[e_{F,r}+\Hrel_r(\bm H\mid\bm U^-)\bigr],\qquad 
		d_r^+=a^+\bigl[e_{F,r}+\Hrel_r(\bm H\mid\bm U^+)\bigr].
	\end{aligned}
\end{equation}
Because $a^-<0<a^+$ and $\Hrel_r\ge0$, the convex-decomposition condition $e_{F,r}\ge0$
guarantees the nonnegativity of the one-sided dissipations, $d_r^\pm\ge0$.
Furthermore, if either dissipation vanishes ($d_r^-=0$ or $d_r^+=0$), then necessarily
$e_{F,r}=0$ and $\Hrel_r(\bm H\mid\bm U^\pm)=0$. The strict convexity of $\eta_r$ then implies
$\bm H=\bm U^-$ (resp.~$\bm H=\bm U^+$), whence the HLL identities \eqref{eq:1d-hll-identities}
and \eqref{eq:1d-hll-entropy-identities} reduce to the physical state and entropy fluxes on that side.
Consequently, the convex-decomposition interface conditions strictly imply the alternative
condition \eqref{eq:1d-weak-side-condition}, and the wave speeds constructed in
Proposition~\ref{prop:1d-weak-convex-attainability} also satisfy the interface hypotheses of
Theorem~\ref{thm:1d-weak-relative}.
Conversely, the dissipation condition $d_r^\pm\ge0$ alone does not yield $e_{F,r}\ge0$ in
\eqref{eq:1d-weak-interface-bridge}, because the positive relative-entropy terms
$\Hrel_r(\bm H\mid\bm U^\pm)$ can compensate for a negative value of $e_{F,r}$.
In this sense, the interface requirement of the convex-decomposition criterion is strictly stronger.

At the cell level, under the CFL condition \eqref{eq:1d-pp-cfl}, we define
$e_{j,r}^{\Xi}:=\mathscr E_{j,r}^{\Xi}-\eta_r(\bm\Xi_j)$
and set $\bm W:=\overline{\bm U}_j^{n+1}\in\G_{B_{\rm const}}$.
For any convex combination $\bm W=\sum_k\theta_k\bm Z_k$ with $\sum_k\theta_k=1$ and $\theta_k\ge0$,
the definition of relative entropy gives the general identity
\[
\sum_k\theta_k\eta_r(\bm Z_k)-\eta_r(\bm W)
=\sum_k\theta_k\Hrel_r(\bm Z_k\mid\bm W).
\]
Applying this relation to the convex decomposition \eqref{eq:1d-pp-decomposition} and
subtracting $\eta_r(\bm W)$ from \eqref{eq:1d-weak-bound-decomposition}, we obtain the unified
cell-defect representation
\begin{equation}\label{eq:1d-weak-cell-bridge}
	\begin{aligned}
		\mathcal B_{j,r}^n-\eta_r(\bm W)
		={}&\sum_{\alpha=2}^{L-1}\widehat\omega_\alpha
		\Hrel_r(\widehat{\bm U}_j^{[\alpha]}\mid\bm W)
		+\lambda D_j
		\bigl[\Hrel_r(\bm\Xi_j\mid\bm W)+e_{j,r}^{\Xi}\bigr]\\
		&-\lambda a_{j+1/2}^-
		\bigl[\Hrel_r(\bm H_{j+1/2}\mid\bm W)+e_{j+1/2,r}\bigr]\\
		&+\lambda a_{j-1/2}^+
		\bigl[\Hrel_r(\bm H_{j-1/2}\mid\bm W)+e_{j-1/2,r}\bigr],
	\end{aligned}
\end{equation}
where $e_{j\pm1/2,r}$ denotes the intermediate gap $e_{F,r}$ at the corresponding cell interface $x_{j\pm1/2}$.

Identity \eqref{eq:1d-weak-cell-bridge} reveals the structural distinction between the two approaches.
The convex-decomposition criterion directly enforces $e_{j,r}^{\Xi}\ge0$ and $e_{j\pm1/2,r}\ge0$,
rendering every individual term on the right-hand side of \eqref{eq:1d-weak-cell-bridge} nonnegative.
In contrast, the relative-entropy-rate criterion starts from the exact defect identity
\eqref{eq:1d-weak-defect-identity} and controls the boundary terms $P_{j,\nu,r}(\bm W)$ by bounding them
against the boundary relative entropies $\Hrel_r(\bm U_{j,\nu}\mid\bm W)$ along the update trajectory $\mathscr W_j$.
In summary, the convex-decomposition interface condition implies the relative-entropy interface condition,
but the two proofs rest on different cellwise mechanisms, and the implication does not establish a general ordering between their respective
time-step restrictions $\lambda_*^{\rm cvx}$ and $\lambda_*^{\rm rel}$.

\section{Weak multi-entropy stability in multiple dimensions}
\label{sec:md}

In multiple dimensions, locally divergence-free (LDF) magnetic fields admit discontinuous normal traces across element interfaces. The Godunov--Powell discretization of Wu and Shu \cite{WuShu2019} treats these jumps through an interface source, while the volume source vanishes identically by the cellwise LDF property. In this section, we extend the two weak multi-entropy stability criteria of Section~\ref{sec:1d} to cell-average updates on general polygonal and polyhedral meshes, incorporating the interface source into both the positivity decomposition and the entropy defect estimates.

\subsection{Multidimensional discretization and positivity}
\label{subsec:md-scheme-pp}

\subsubsection{Quadrature and the LDF condition}
Let $\Th$ be a finite conforming mesh of bounded polytopal cells $K$ with nonempty interiors and planar faces in $d\in\{2,3\}$ spatial dimensions. On each cell $K\in\Th$, let $\bm U_K^n\in[\mathbb P^k(K)]^8$ denote a DG polynomial solution or a high-order finite-volume reconstruction with the given cell average, where $\mathbb P^k(K)$ is the space of polynomials of total degree at most $k$. The magnetic field is required to belong to the locally divergence-free polynomial space
\begin{equation}\label{eq:md-ldf}
	\mathbb V_{\rm div}^k(K)
	:=\left\{\bm b\in[\mathbb P^k(K)]^d:
	\sum_{i=1}^d\partial_{x_i}b_i=0
	\text{ in }K\right\},\qquad
	(B_{1,K}^n,\ldots,B_{d,K}^n)^\top
	\in\mathbb V_{\rm div}^k(K).
\end{equation}

For each face $e\subset\partial K$, let $\bm n_{K,e}\in\R^d$ denote its outward unit normal. When $d=2$, $\bm n_{K,e}$ is identified with $(n_1,n_2,0)^\top$ when taking inner or cross products with three-component vectors. For any unit normal $\bm n$, we define the directional quantities
\[
\bm F_{\bm n}(\bm U):=\sum_{i=1}^d n_i\bm F_i(\bm U),\qquad
B_{\bm n}(\bm U):=\bm B\cdot\bm n,\qquad
q_{r,\bm n}(\bm U):=\eta_r(\bm U)\bm v\cdot\bm n.
\]
Let $|K|$ denote the $d$-dimensional measure of $K$ and $|e|$ the $(d-1)$-dimensional measure of a face $e$. On each face, we choose a quadrature rule $\{(\bm x_{e,q},\varpi_{e,q})\}_{q=1}^{Q_e}$ exact for face averages of polynomials in $\mathbb P^k(e)$, with positive weights $\varpi_{e,q}>0$ normalized such that $\sum_q\varpi_{e,q}=1$. On shared internal faces, adjacent cells use identical quadrature nodes and weights. We define the face-quadrature index set
\[
\mathcal I_K^\partial
:=\{(e,q):e\subset\partial K,\ 1\le q\le Q_e\}.
\]
For each index $\nu=(e,q)\in\mathcal I_K^\partial$, we set
\begin{equation}\label{eq:md-face-data}
	\begin{aligned}
		c_{K,\nu}:=|e|\varpi_{e,q}, \quad 
		\bm n_{K,\nu}:=\bm n_{K,e},\quad 
		\bm U_{K,\nu}:=\bm U_K^n(\bm x_{e,q}), \quad 
		\bm U_{K,\nu}^{\ext}:=\bm U_{K_e}^n(\bm x_{e,q}),
	\end{aligned}
\end{equation}
where $K_e$ is the neighboring cell sharing face $e$. At a domain boundary, $\bm U_{K,\nu}^{\ext}$ represents a prescribed exterior boundary trace.

Following \cite{ZhangShu2010,WuShu2019}, we assume that each cell admits a volume quadrature with strictly positive weights that is exact for cell averages of polynomials in $\mathbb P^k(K)$ and whose node set includes all face quadrature points. The cell average then admits the representation
\begin{equation}\label{eq:md-cell-average}
	\begin{aligned}
		\overline{\bm U}_K^n
		&:=\frac1{|K|}\int_K\bm U_K^n\,\dd\bm x
		=\sum_{\alpha\in\mathcal I_K^\circ}\omega_{K,\alpha}^\circ
		\widehat{\bm U}_K^{[\alpha]}
		+\sum_{\nu\in\mathcal I_K^\partial}\omega_{K,\nu}\bm U_{K,\nu},\\
		&\sum_\alpha\omega_{K,\alpha}^\circ
		+\sum_\nu\omega_{K,\nu}=1,
		\qquad \omega_{K,\alpha}^\circ,\omega_{K,\nu}>0.
	\end{aligned}
\end{equation}
Here $\widehat{\bm U}_K^{[\alpha]}:=\bm U_K^n(\bm x_K^{[\alpha]})$ denotes an interior quadrature state, with the interior index set $\mathcal I_K^\circ$ permitted to be empty. The face integration weights $c_{K,\nu}$ and the cell-average weights $\omega_{K,\nu}$ need not be proportional. We retain these quadrature assumptions throughout this section; see \cite[Section~4.2.2]{WuShu2019} for explicit constructions on simplices, tensor-product cells, and general polygonal subdivisions.

All quadrature states and exterior boundary traces are assumed to belong to the admissible set $\G$. Denote the density, velocity, and magnetic field of $\bm U_{K,\nu}$ by $\rho_{K,\nu}$, $\bm v_{K,\nu}$, and $\bm B_{K,\nu}$, respectively. Unless otherwise specified, sums over $\nu$ and $\alpha$ run over $\mathcal I_K^\partial$ and $\mathcal I_K^\circ$.

The divergence theorem and the exactness of face quadratures yield the discrete Gauss identities
\begin{equation}\label{eq:md-discrete-gauss}
	\sum_\nu c_{K,\nu}\bm n_{K,\nu}=\bm0,\qquad
	\sum_\nu c_{K,\nu}B_{\bm n_{K,\nu}}(\bm U_{K,\nu})
	=\int_K\nabla\cdot\bm B_K^n\,\dd\bm x=0.
\end{equation}
The second identity in \eqref{eq:md-discrete-gauss} follows from the LDF property \eqref{eq:md-ldf} and involves only the interior traces of $K$; global continuity of the normal magnetic field across element faces is neither assumed nor required. Furthermore, no exactness assumption is made for quadratures involving nonlinear physical fluxes.

\subsubsection{The cell-average update}
At a given face quadrature point, let $\bm U^-$ and $\bm U^+$ denote the traces on the two sides, with the unit normal $\bm n$ pointing from the minus side to the plus side. We denote the normal magnetic jump by $\delta B_{\bm n}:=B_{\bm n}(\bm U^+)-B_{\bm n}(\bm U^-)$. Given wave speeds $a^-<0<a^+$, let $C_F:=a^+-a^-$, where $F$ identifies the interface quadrature point. The standard HLL flux is
\begin{equation}\label{eq:md-hll-flux}
	\widehat{\bm F}_{\bm n}
	:=\frac{a^+\bm F_{\bm n}(\bm U^-)
		-a^-\bm F_{\bm n}(\bm U^+)
		+a^+a^-(\bm U^+-\bm U^-)}{C_F}.
\end{equation}
For cell $K$ and face index $\nu$, we evaluate the interface quantities by setting $(\bm U^-,\bm U^+,\bm n) =(\bm U_{K,\nu},\bm U_{K,\nu}^{\ext},\bm n_{K,\nu})$. We denote the resulting HLL flux and wave speeds by $\widehat{\bm F}_{K,\nu}$ and $a_{K,\nu}^\pm$, and set $\delta B_{K,\nu}:= B_{\bm n_{K,\nu}}(\bm U_{K,\nu}^{\ext}) - B_{\bm n_{K,\nu}}(\bm U_{K,\nu})$.
Using the same wave speeds for the discretization of the Godunov--Powell source term, we define the modified outward numerical flux operator
\begin{equation}\label{eq:md-powell-operators}
	\bm G_{K,\nu}:=\widehat{\bm F}_{K,\nu}
	-\frac{a_{K,\nu}^-}{a_{K,\nu}^+-a_{K,\nu}^-}
	\delta B_{K,\nu}\bm S(\bm U_{K,\nu}).
\end{equation}
For a forward Euler step $\tau>0$, let $\lambda_K:=\tau/|K|$. The cell-average update takes the form
\begin{equation}\label{eq:md-mean-update}
	\overline{\bm U}_K^{n+1}
	=\overline{\bm U}_K^n
	-\lambda_K\sum_\nu c_{K,\nu}\bm G_{K,\nu}.
\end{equation}
Equation \eqref{eq:md-mean-update} is the common cell-average update for the LDF DG scheme and high-order finite-volume methods developed in \cite{WuShu2019}. Adjacent cells share a common pair of wave speeds at each interface quadrature point. Reversing the interface orientation exchanges the interior and exterior traces, mapping $\bm n \mapsto -\bm n$ and $(a^-,a^+) \mapsto (-a^+,-a^-)$. Under this orientation reversal, the HLL state flux and the numerical entropy fluxes defined below change sign, while the normal magnetic jump $\delta B_{\bm n}$ remains invariant.

\subsubsection{A sufficient positivity condition}
Building on the estimates of Wu and Shu \cite{WuShu2019}, we reorganize the positivity analysis by absorbing the Godunov--Powell interface source into corrected HLL intermediate states and strengthening the wave-speed bounds to guarantee their physical admissibility, thereby obtaining a multidimensional convex decomposition suited to the entropy analysis that follows.

Recall the standard HLL intermediate state
\begin{equation}\label{eq:md-hll-state}
	\bm H:=\frac{a^+\bm U^+-a^-\bm U^-
		-\bm F_{\bm n}(\bm U^+)+\bm F_{\bm n}(\bm U^-)}{C_F}
\end{equation}
and define the two source-corrected intermediate states
\begin{equation}\label{eq:md-corrected-hll-states}
	\bm T^\pm:=\bm H-\frac{\delta B_{\bm n}}{C_F}\bm S(\bm U^\pm).
\end{equation}
With respect to the outward orientation of cell $K$, we write $\bm T_{K,\nu}:=\bm T^-$. A direct calculation using \eqref{eq:md-powell-operators} yields the modified consistency identity
\begin{equation}\label{eq:md-hll-powell-identities}
	\bm G_{K,\nu}
	=\bm F_{\bm n_{K,\nu}}(\bm U_{K,\nu})
	+a_{K,\nu}^-(\bm T_{K,\nu}-\bm U_{K,\nu}).
\end{equation}
Under orientation reversal, $\bm T^-$ and $\bm T^+$ are interchanged, ensuring that \eqref{eq:md-hll-powell-identities} holds symmetrically for both cells sharing the interface.

\begin{remark}[Interface source correction and the one-dimensional reduction]
	\label{rem:md-interface-correction}
	Because the Godunov--Powell source vector $\bm S$ is evaluated separately at the left and right interface traces, the interface source naturally induces two distinct corrected states $\bm T^\pm$. In the absence of a normal magnetic jump ($\delta B_{\bm n}=0$), both states coincide with $\bm H$, recovering the single HLL intermediate state employed in the one-dimensional decomposition.
\end{remark}

Let $\alpha_l$ and $\alpha_r$ denote the directional splitting bounds from \cite[Eqs.~(12)--(13)]{WuShu2019}, and define the upper bound on the fast magnetoacoustic speed
\[
s_F:=\max_{\varsigma\in\{-,+\}}
\sqrt{\frac{\gamma p(\bm U^\varsigma)+|\bm B(\bm U^\varsigma)|^2}
	{\rho(\bm U^\varsigma)}}>0.
\]
Setting $\rho^\pm:=\rho(\bm U^\pm)$, we define the baseline interface wave speeds
\begin{equation}\label{eq:md-initial-speeds}
	\begin{aligned}
		a^{-,0}&:=\min\{0,\alpha_l(\bm U^-,\bm U^+;\bm n)\}
		-\frac{|\delta B_{\bm n}|}{\sqrt{\rho^-}}-s_F,\\
		a^{+,0}&:=\max\{0,\alpha_r(\bm U^+,\bm U^-;\bm n)\}
		+\frac{|\delta B_{\bm n}|}{\sqrt{\rho^+}}+s_F.
	\end{aligned}
\end{equation}
Quantities evaluated with these baseline speeds are designated with a superscript $0$; in particular, $C_F^0:=a^{+,0}-a^{-,0}>0$ denotes the baseline fan width.

\begin{lemma}[Admissibility of the corrected HLL states]
	\label{lem:md-corrected-hll-admissibility}
	Let $\bm U^\pm\in\G$ and let $a^{-,0},a^{+,0}$ be defined by \eqref{eq:md-initial-speeds}. If the interface wave speeds satisfy
	\begin{equation}\label{eq:md-pp-speed-condition}
		a^-\le a^{-,0},\qquad a^+\ge a^{+,0},
	\end{equation}
	then both corrected states in \eqref{eq:md-corrected-hll-states} belong to the admissible set $\G$.
\end{lemma}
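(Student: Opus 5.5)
We prove admissibility of $\bm T^-$; the statement for $\bm T^+$ follows by orientation reversal, which swaps $\bm U^\pm$, maps $\bm n\mapsto-\bm n$ and $(a^-,a^+)\mapsto(-a^+,-a^-)$, leaves $\delta B_{\bm n}$ invariant, and interchanges $\bm T^\pm$. The strategy has three steps: rewrite $\bm T^-$ as a convex combination of one-sided states, reduce admissibility of each piece to a pressure-positivity condition, and then control the magnetic jump.

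\emph{Step 1: split $\bm T^-$.} Writing $C_F=(a^+-a^{+,0})+(a^{+,0}-a^{-,0})+(a^{-,0}-a^-)$, the enlargement beyond the baseline adds the terms $(a^+-a^{+,0})\bm U^+$ and $(a^{-,0}-a^-)\bm U^-$ to the numerator. Hence
\[
\bm T^-=\frac{C_F^0\,\bm T^{-,0}+(a^+-a^{+,0})\bm U^++(a^{-,0}-a^-)\bm U^-}{C_F},
\]
where $\bm T^{-,0}$ is the corrected state evaluated with the baseline speeds. Since $\G$ is convex and $\bm U^\pm\in\G$, it suffices to prove the lemma for $a^\pm=a^{\pm,0}$.

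\emph{Step 2: reduce to the GQL pressure condition.} Split the baseline numerator as
\[
\bigl[a^{+}\bm U^+-\bm F_{\bm n}(\bm U^+)\bigr]+\bigl[-a^{-}\bm U^- +\bm F_{\bm n}(\bm U^-)-\delta B_{\bm n}\bm S(\bm U^-)\bigr].
\]
In the second bracket, the density component is $\rho^-(v_{\bm n}^- -a^-)>0$, and it carries the Powell source attached to $\bm U^-$. Following Wu--Shu \cite{WuShu2019}, we do not treat the two sides separately. Instead, we use the GQL characterization: $\bm T\in\G$ iff $\rho(\bm T)>0$ and $\bm T\cdot\bm\xi^*+|\bm B^*|^2/2>0$ for all $\bm v^*,\bm B^*$, where $\bm\xi^*=(|\bm v^*|^2/2,-\bm v^{*\top},-\bm B^{*\top},1)^\top$.

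Density positivity follows as for the standard HLL state, because $\bm S$ has zero density component and $a^\pm$ dominate the speeds $\alpha_{l,r}$. For the second condition, multiply by $C_F>0$ and invoke the key inequality of \cite[Lemma~2 / Eq.~(20)]{WuShu2019}:
\[
\bigl(a\bm U-\bm F_{\bm n}(\bm U)\bigr)\cdot\bm\xi^*+a\tfrac{|\bm B^*|^2}{2}\ \ge\ -(\bm v^*\cdot\bm B^*)\,B_{\bm n}(\bm U)\quad\text{for }a\ge\alpha_r,
\]
together with its counterpart for $a\le\alpha_l$. Summing the two sides, the residual terms combine into $(\bm v^*\cdot\bm B^*)\,\delta B_{\bm n}$. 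This is precisely the sign-indefinite term that the source $-\delta B_{\bm n}\bm S(\bm U^-)\cdot\bm\xi^*$ must absorb.

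\emph{Step 3 (main obstacle): absorb the magnetic jump.} One computes
\[
\bm S(\bm U^-)\cdot\bm\xi^*=-\bm B^-\cdot\bm v^*-\bm v^-\cdot\bm B^*+\bm v^-\cdot\bm B^-.
\]
Combined with $(\bm v^*\cdot\bm B^*)\delta B_{\bm n}$, this gives
\[
\delta B_{\bm n}\,(\bm v^*-\bm v^-)\cdot(\bm B^*-\bm B^-).
\]
This cross term has no sign, so it must be dominated by the quadratic surplus produced by the extra speed margin in \eqref{eq:md-initial-speeds}, namely
\[
\Bigl(\tfrac{|\delta B_{\bm n}|}{\sqrt{\rho^-}}+s_F\Bigr)\Bigl[\tfrac{\rho^-}{2}|\bm v^*-\bm v^-|^2+\tfrac12|\bm B^*-\bm B^-|^2+\rho^-e^-\Bigr].
\]
By Young's inequality,
\[
|\delta B_{\bm n}|\,|\bm v^*-\bm v^-|\,|\bm B^*-\bm B^-|\le\frac{|\delta B_{\bm n}|}{\sqrt{\rho^-}}\Bigl(\tfrac{\rho^-}{2}|\bm v^*-\bm v^-|^2+\tfrac12|\bm B^*-\bm B^-|^2\Bigr),
\]
which is covered by the first factor of the surplus. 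The remaining contribution $s_F\rho^-e^->0$ yields strict positivity.

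The delicate point is the exact bookkeeping of which side's quadratic form multiplies each margin, so that the surplus is centered at $\bm v^-,\bm B^-$, matching the source evaluated at $\bm U^-$. We would attempt this by redoing the Wu--Shu one-sided estimate with $a^{-,0}$ replaced by $a^{-,0}+|\delta B_{\bm n}|/\sqrt{\rho^-}+s_F$.
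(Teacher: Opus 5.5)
Your plan follows the paper's proof. You reduce to the baseline speeds by the convex combination (the paper's last step) and test admissibility with the affine functional $\Phi_*(\bm U)=\bm U\cdot\bm\xi^*+\tfrac12|\bm B^*|^2$. You get density positivity from the HLL state, since $\bm S$ has no density entry. You absorb the Powell residual with the margin $|\delta B_{\bm n}|/\sqrt{\rho^-}$ via Young's inequality, and $s_F$ gives strictness.

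The gap is in Step~2: the one-sided inequality you invoke is false, so there is nothing valid to sum. Evaluating at $\bm v^*=\bm v$, $\bm B^*=\bm B$, a direct computation gives
\[
\bigl(a\bm U-\bm F_{\bm n}(\bm U)\bigr)\cdot\bm\xi^*+\tfrac{a}{2}|\bm B^*|^2+(\bm v^*\cdot\bm B^*)B_{\bm n}(\bm U)=(a-v_{\bm n})\rho e+\tfrac12 v_{\bm n}|\bm B|^2 .
\]
This is negative whenever $v_{\bm n}<0$, $\bm B\ne\bm 0$ and $\rho e<\tfrac12|v_{\bm n}||\bm B|^2/(a-v_{\bm n})$. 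At fixed $\rho,\bm v,\bm B$, both $s_F$ and the fast-speed-type bounds $\alpha_{l,r}$ stay bounded as $p\to0^+$, so no speed of the form \eqref{eq:md-initial-speeds} makes it hold. Already with $\bm U^+=\bm U^-$ you can see the mechanism: the two one-sided quantities sum to $(a^+-a^-)\Phi_*(\bm U)>0$, yet each can be negative. This mirrors the failure of the Lax--Friedrichs splitting property for MHD. Wu's positivity estimates are intrinsically two-state, and only the combination of the two traces is controlled. Your remark that the sides are not treated separately is the right instinct, but the inequality you then write down is one-sided.

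The repair is local, because Step~3 only uses the summed inequality. Set $b^-:=\min\{0,\alpha_l\}$, $b^+:=\max\{0,\alpha_r\}$ and $C_b:=b^+-b^-$, and let $\bm H_b$ be the HLL state at these zero-margin speeds. Use the genuinely two-state HLL estimate, which is what the paper takes from Wu--Shu's Theorem~2; in your notation it reads $C_b\Phi_*(\bm H_b)+\delta B_{\bm n}(\bm v^*\cdot\bm B^*)\ge0$. Put $m^\pm:=|\delta B_{\bm n}|/\sqrt{\rho^\pm}+s_F$, so that $a^{+,0}=b^++m^+$ and $a^{-,0}=b^--m^-$. Affinity of $\Phi_*$ then gives the exact identity
\[
\begin{aligned}
C_F^0\Phi_*(\bm T^{-,0})={}&\bigl[C_b\Phi_*(\bm H_b)+\delta B_{\bm n}\,(\bm v^*\cdot\bm B^*)\bigr]+m^+\Phi_*(\bm U^+)\\
&+m^-\Phi_*(\bm U^-)-\delta B_{\bm n}\,(\bm v^--\bm v^*)\cdot(\bm B^--\bm B^*).
\end{aligned}
\]
The bookkeeping you flag as delicate is therefore automatic. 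The margin multiplying $\Phi_*(\bm U^-)$ is centered at $\bm U^-$ by definition, and no Wu--Shu estimate needs to be re-derived with shifted speeds. Your Young step then yields exactly the paper's bound $C_F^0\Phi_*(\bm T^{-,0})\ge s_F\Phi_*(\bm U^-)+\bigl(s_F+|\delta B_{\bm n}|/\sqrt{\rho^+}\bigr)\Phi_*(\bm U^+)>0$. With this substitution your argument coincides with the paper's. The orientation-reversal shortcut for $\bm T^+$ is fine given the reflection symmetry of $\alpha_{l,r}$; the paper simply writes both bounds.
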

\begin{proof}
	For arbitrary constant vectors $\bm v_*,\bm B_*\in\R^3$, consider the quadratic auxiliary functional
	\[
	\Phi_*(\bm U):=E-\bm m\cdot\bm v_*+\tfrac12\rho|\bm v_*|^2
	-\bm B\cdot\bm B_*+\tfrac12|\bm B_*|^2.
	\]
	Because the density component of the source vector $\bm S$ vanishes identically, the HLL estimate in \cite[Theorem~2]{WuShu2019} guarantees $\rho(\bm T^{\pm,0})=\rho(\bm H^0)>0$. Combining that estimate with the speed increments in \eqref{eq:md-initial-speeds} and the nonconservative source bound from \cite[Lemma~7]{WuShu2019} yields
	\begin{equation}\label{eq:md-corrected-state-positivity}
		\begin{aligned}
			C_F^0\Phi_*(\bm T^{-,0})
			&\ge s_F\Phi_*(\bm U^-)
			+\left(s_F+\frac{|\delta B_{\bm n}|}{\sqrt{\rho^+}}\right)
			\Phi_*(\bm U^+)>0,\\
			C_F^0\Phi_*(\bm T^{+,0})
			&\ge\left(s_F+\frac{|\delta B_{\bm n}|}{\sqrt{\rho^-}}\right)
			\Phi_*(\bm U^-)+s_F\Phi_*(\bm U^+)>0.
		\end{aligned}
	\end{equation}
	By the convex dual characterization of $\G$ in \cite[Lemma~1]{WuShu2019}, this implies $\bm T^{\pm,0}\in\G$. For general wave speeds satisfying \eqref{eq:md-pp-speed-condition}, defining $h_-:=a^{-,0}-a^-\ge0$ and $h_+:=a^+-a^{+,0}\ge0$, we express $\bm T^\pm$ as the convex combination
	\[
	\bm T^\pm
	=\frac{C_F^0\bm T^{\pm,0}+h_-\bm U^-+h_+\bm U^+}
	{C_F^0+h_-+h_+}\in\G,
	\]
	which completes the proof by the convexity of $\G$.
\end{proof}

For the cell-interior estimate, we introduce the Roe-type velocity average
\[
\widetilde{\bm v}_{K,\nu\mu}
:=\frac{\sqrt{\rho_{K,\nu}}\bm v_{K,\nu}
	+\sqrt{\rho_{K,\mu}}\bm v_{K,\mu}}
{\sqrt{\rho_{K,\nu}}+\sqrt{\rho_{K,\mu}}}.
\]
Using the total boundary measure $|\partial K|:=\sum_\nu c_{K,\nu}$, the multistate splitting coefficients of \cite[Theorem~1]{WuShu2019} take the form
\begin{equation}\label{eq:md-multistate-coefficient}
	\begin{aligned}
		\alpha_{K,\nu}^{\rm PP}:={}&
		\max\left\{\bm v_{K,\nu}\cdot\bm n_{K,\nu},
		\frac1{|\partial K|}\sum_\mu c_{K,\mu}
		(\bm n_{K,\nu}-\bm n_{K,\mu})\cdot
		\widetilde{\bm v}_{K,\nu\mu}\right\}\\
		&+\mathcal C(\bm U_{K,\nu};\bm n_{K,\nu})
		+\frac2{|\partial K|}\sum_\mu c_{K,\mu}
		\frac{|\bm B_{K,\nu}-\bm B_{K,\mu}|}
		{\sqrt{\rho_{K,\nu}}+\sqrt{\rho_{K,\mu}}},
	\end{aligned}
\end{equation}
where $\mathcal C(\bm U;\bm n)$ is the directional acoustic splitting speed defined in \cite[Section~2.2.1]{WuShu2019}.

We impose the multidimensional positivity CFL restriction
\begin{equation}\label{eq:md-pp-cfl}
	\lambda_Kc_{K,\nu}
	(\alpha_{K,\nu}^{\rm PP}-a_{K,\nu}^-)
	<\omega_{K,\nu},\qquad K\in\Th,\quad\nu\in\mathcal I_K^\partial.
\end{equation}
Analogously to the one-dimensional case, we define the auxiliary boundary coefficients and the cell-interior splitting state by
\begin{equation}\label{eq:md-interior-state}
	\begin{aligned}
		\beta_{K,\nu}:=\frac{\omega_{K,\nu}}{\lambda_Kc_{K,\nu}}
		+a_{K,\nu}^-, \quad 
		D_K:=\sum_\nu c_{K,\nu}\beta_{K,\nu},\quad 
		\bm\Xi_K
		:=\frac{\sum_\nu c_{K,\nu}
			[\beta_{K,\nu}\bm U_{K,\nu}
			-\bm F_{\bm n_{K,\nu}}(\bm U_{K,\nu})]}{D_K}.
	\end{aligned}
\end{equation}
Condition \eqref{eq:md-pp-cfl} ensures that $\varepsilon_{K,\nu}:=\beta_{K,\nu}-\alpha_{K,\nu}^{\rm PP}>0$. Invoking the discrete divergence identity \eqref{eq:md-discrete-gauss} and \cite[Theorem~1 and Remark~1]{WuShu2019}, the baseline sum $\widehat D_K:=\sum_\nu c_{K,\nu}\alpha_{K,\nu}^{\rm PP}$ is strictly positive, and the baseline state $\widehat{\bm\Xi}_K$ obtained from \eqref{eq:md-interior-state} by replacing $\beta_{K,\nu}$ with $\alpha_{K,\nu}^{\rm PP}$ satisfies $\widehat{\bm\Xi}_K\in\overline\G$. Consequently,
\[
D_K=\widehat D_K+\sum_\nu c_{K,\nu}\varepsilon_{K,\nu}>0,
\qquad
\bm\Xi_K=
\frac{\widehat D_K\widehat{\bm\Xi}_K
	+\sum_\nu c_{K,\nu}\varepsilon_{K,\nu}\bm U_{K,\nu}}
{D_K}\in\G,
\]
because the admissible boundary states carry strictly positive weights.

Substituting the modified flux identity \eqref{eq:md-hll-powell-identities} into \eqref{eq:md-mean-update}, we obtain the multidimensional convex decomposition
\begin{equation}\label{eq:md-pp-decomposition}
	\overline{\bm U}_K^{n+1}
	=\sum_\alpha\omega_{K,\alpha}^\circ
	\widehat{\bm U}_K^{[\alpha]}
	+\lambda_KD_K\bm\Xi_K
	-\lambda_K\sum_\nu c_{K,\nu}a_{K,\nu}^-\bm T_{K,\nu}.
\end{equation}
Because $\lambda_KD_K-\lambda_K\sum_\nu c_{K,\nu}a_{K,\nu}^- =\sum_\nu\omega_{K,\nu}$ and the quadrature weights sum to unity in \eqref{eq:md-cell-average}, all expansion coefficients in \eqref{eq:md-pp-decomposition} are strictly positive and sum to one.

\begin{proposition}[Positivity of the multidimensional cell average]
	\label{prop:md-pp}
	Under the preceding quadrature, admissibility, and LDF assumptions, suppose that \eqref{eq:md-pp-speed-condition} holds at every interface point. If the CFL condition \eqref{eq:md-pp-cfl} holds, then $\overline{\bm U}_K^{n+1}\in\G$ for every cell $K\in\Th$.
\end{proposition}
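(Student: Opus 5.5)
The plan is to read off admissibility of $\overline{\bm U}_K^{n+1}$ from the convex decomposition \eqref{eq:md-pp-decomposition}, using that $\G$ is convex. Three things must be checked:
\begin{itemize}
\item the decomposition is an algebraic identity;
\item every state appearing in it lies in $\G$;
\item all coefficients are strictly positive and sum to one.
\end{itemize}

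\textbf{The identity.} I substitute the modified consistency identity \eqref{eq:md-hll-powell-identities} into the update \eqref{eq:md-mean-update}. This gives
\[
\overline{\bm U}_K^{n+1}=\overline{\bm U}_K^n-\lambda_K\sum_\nu c_{K,\nu}\bigl[\bm F_{\bm n_{K,\nu}}(\bm U_{K,\nu})-a_{K,\nu}^-\bm U_{K,\nu}\bigr]-\lambda_K\sum_\nu c_{K,\nu}a_{K,\nu}^-\bm T_{K,\nu}.
\]
Next I write $\overline{\bm U}_K^n$ through the quadrature \eqref{eq:md-cell-average}. I split each boundary weight as $\omega_{K,\nu}=\lambda_Kc_{K,\nu}(\beta_{K,\nu}-a_{K,\nu}^-)$, which is just the definition of $\beta_{K,\nu}$ in \eqref{eq:md-interior-state}. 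The $a^-$ terms then cancel, and the boundary terms collect into $\lambda_K\sum_\nu c_{K,\nu}[\beta_{K,\nu}\bm U_{K,\nu}-\bm F_{\bm n_{K,\nu}}(\bm U_{K,\nu})]=\lambda_KD_K\bm\Xi_K$. This yields \eqref{eq:md-pp-decomposition}.

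\textbf{The coefficients.} The coefficients are $\omega^\circ_{K,\alpha}>0$, $\lambda_KD_K$ and $-\lambda_Kc_{K,\nu}a^-_{K,\nu}>0$; the last is positive since $a^-\le a^{-,0}<0$. They sum to one by the relation $\lambda_KD_K-\lambda_K\sum_\nu c_{K,\nu}a^-_{K,\nu}=\sum_\nu\omega_{K,\nu}$ together with \eqref{eq:md-cell-average}.

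\textbf{Admissibility of the states.} Interior quadrature states lie in $\G$ by assumption. Each $\bm T_{K,\nu}=\bm T^-$ lies in $\G$ by Lemma~\ref{lem:md-corrected-hll-admissibility}, since \eqref{eq:md-pp-speed-condition} is assumed at every interface point; exterior boundary traces are admissible by hypothesis, so the lemma applies at domain faces as well. The main point is $\bm\Xi_K\in\G$ with $D_K>0$. Here I use the CFL condition \eqref{eq:md-pp-cfl}: dividing by $\lambda_Kc_{K,\nu}$ gives $\varepsilon_{K,\nu}=\beta_{K,\nu}-\alpha^{\rm PP}_{K,\nu}>0$.

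Then I invoke the multistate splitting result \cite[Theorem~1 and Remark~1]{WuShu2019} for the baseline data. Its hypothesis is a vanishing-divergence condition, which is supplied exactly by the discrete Gauss identities \eqref{eq:md-discrete-gauss}, i.e.\ by the LDF property \eqref{eq:md-ldf}. The result gives $\widehat D_K>0$ and $\widehat{\bm\Xi}_K\in\overline\G$. Since $D_K\bm\Xi_K=\widehat D_K\widehat{\bm\Xi}_K+\sum_\nu c_{K,\nu}\varepsilon_{K,\nu}\bm U_{K,\nu}$, the state $\bm\Xi_K$ is a convex combination of a point of $\overline\G$ and points of $\G$, the latter with strictly positive total weight.

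\textbf{Expected obstacle.} The step I expect to need the most care is this last one. First, I must confirm that the hypotheses of Wu--Shu's Theorem~1 match the present normalization: face weights $c_{K,\nu}$ summing to $|\partial K|$, and the discrete divergence $\sum_\nu c_{K,\nu}B_{\bm n_{K,\nu}}=0$ being exactly zero rather than merely small. Second, I must justify that a convex combination of a point of $\overline\G$ and a point of the open convex set $\G$, the latter with positive weight, lies in $\G$. This is standard: $\G$ is open and convex, and $\overline\G$ is its closure. With these facts, convexity of $\G$ gives $\overline{\bm U}_K^{n+1}\in\G$ for every $K$.
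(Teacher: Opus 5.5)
Your proposal is correct and follows the paper's route. The paper's proof is the one-line observation that every state in \eqref{eq:md-pp-decomposition} lies in $\G$, by Lemma~\ref{lem:md-corrected-hll-admissibility} and by the interior splitting estimate from \cite[Theorem~1]{WuShu2019} together with the LDF Gauss identity \eqref{eq:md-discrete-gauss}, followed by convexity of $\G$. The supporting facts are established in the text just before the statement exactly as you reconstruct them: the decomposition identity, the positivity and unit sum of the weights, and $\bm\Xi_K\in\G$. Your explicit check of the algebra and of the closure-plus-open-set convexity step fills in details the paper leaves implicit.
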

\begin{proof}
	Lemma~\ref{lem:md-corrected-hll-admissibility} and the interior estimate ensure that every state in decomposition \eqref{eq:md-pp-decomposition} belongs to $\G$. The assertion then follows from the convexity of $\G$.
\end{proof}

For both multidimensional entropy criteria below, interface wave speeds are selected first and held fixed. With these speeds and numerical fluxes fixed, we choose a CFL safety factor $0<\sigma<1$ and define the cell reference time-step bound
\begin{equation}\label{eq:md-reference-step}
	\tau_K^0:=\sigma|K|\min_{\nu\in\mathcal I_K^\partial}
	\frac{\omega_{K,\nu}}
	{c_{K,\nu}(\alpha_{K,\nu}^{\rm PP}-a_{K,\nu}^-)}>0.
\end{equation}
Indeed, \eqref{eq:md-multistate-coefficient} implies $\alpha_{K,\nu}^{\rm PP}\ge\bm v_{K,\nu}\cdot\bm n_{K,\nu} +\mathcal C(\bm U_{K,\nu};\bm n_{K,\nu})$, whereas \eqref{eq:md-initial-speeds}--\eqref{eq:md-pp-speed-condition} and \cite[Eq.~(13)]{WuShu2019} give $a_{K,\nu}^-\le\bm v_{K,\nu}\cdot\bm n_{K,\nu} -\mathcal C(\bm U_{K,\nu};\bm n_{K,\nu})$. Subtracting these inequalities yields
\[
\alpha_{K,\nu}^{\rm PP}-a_{K,\nu}^-
\ge 2\mathcal C(\bm U_{K,\nu};\bm n_{K,\nu})>0.
\]
Thus every denominator in \eqref{eq:md-reference-step} is strictly positive and bounded away from zero. Any uniform time step satisfying $0<\tau\le\min_K\tau_K^0$ therefore satisfies the positivity CFL condition \eqref{eq:md-pp-cfl} strictly.

\subsection{Entropy bounds and stability definitions}
\label{subsec:md-weak}
Combining the cell quadrature with the outward numerical entropy fluxes yields the multidimensional counterpart of Definition~\ref{def:weak-strong-entropy}. We present the formulation for planar polygonal meshes ($d=2$); the definitions and bounds carry over directly to polyhedral meshes in three dimensions ($d=3$).

At each oriented interface point, employing the same wave speeds as in \eqref{eq:md-hll-flux} and~\eqref{eq:md-powell-operators}, we define the numerical entropy fluxes by
\begin{equation}\label{eq:md-hll-entropy-flux}
	\widehat q_r
	:=\frac{a^+q_{r,\bm n}(\bm U^-)
		-a^-q_{r,\bm n}(\bm U^+)
		+a^+a^-[\eta_r(\bm U^+)-\eta_r(\bm U^-)]}{C_F}.
\end{equation}
Each flux is consistent with the directional entropy flux $q_{r,\bm n}$ and is conservative across cell interfaces, changing sign under reversal of the normal. We denote by $\widehat q_{K,\nu,r}$ its outward value at face node $\nu$ of cell $K$.

In terms of the cell quadrature \eqref{eq:md-cell-average}, the discrete cell entropy at time level $t^n$ is given by
\begin{equation}\label{eq:md-quadrature-entropy}
	\mathcal S_{K,r}(\bm U_K^n)
	:=\sum_\alpha\omega_{K,\alpha}^\circ
	\eta_r(\widehat{\bm U}_K^{[\alpha]})
	+\sum_\nu\omega_{K,\nu}\eta_r(\bm U_{K,\nu}),
\end{equation}
and the corresponding forward Euler entropy bound is
\begin{equation}\label{eq:md-weak-bound}
	\mathcal B_{K,r}^n
	:=\mathcal S_{K,r}(\bm U_K^n)
	-\lambda_K\sum_\nu c_{K,\nu}\widehat q_{K,\nu,r}.
\end{equation}
Because the quadrature nodes and weights are fixed and the numerical entropy fluxes depend solely on $\bm U^n$, the bound $\mathcal B_{K,r}^n$ is fixed by the cell-average update.

\begin{definition}[Local weak and strong multi-entropy stability in multiple dimensions]
	\label{def:md-weak-strong-entropy}
	Retain the preceding quadrature, admissibility, and LDF assumptions on $\Th$ for $d\in\{2,3\}$. The cell-average update is \emph{locally weakly multi-entropy stable} if
	\begin{equation}\label{eq:md-weak-entropy}
		\overline{\bm U}_K^{n+1}\in\G,
		\qquad
		\eta_r(\overline{\bm U}_K^{n+1})\le\mathcal B_{K,r}^n,
		\quad K\in\Th,\quad 1\le r\le M,
	\end{equation}
	with the local entropy bounds \eqref{eq:md-weak-bound}.
	A polynomial update $\bm U_K^{n+1}\in[\mathbb P^k(K)]^8$ whose magnetic components satisfy $(B_{1,K}^{n+1},\ldots,B_{d,K}^{n+1})^\top\in\mathbb V_{\rm div}^k(K)$, having these prescribed cell averages, is \emph{locally strongly multi-entropy stable} if its states at every node in \eqref{eq:md-cell-average} belong to $\G$ and
	\begin{equation}\label{eq:md-strong-entropy}
		\mathcal S_{K,r}(\bm U_K^{n+1})\le\mathcal B_{K,r}^n,
		\qquad K\in\Th,\quad 1\le r\le M.
	\end{equation}
	The inequalities hold simultaneously for the prescribed entropy family, with one updated solution, one common time step, and one shared speed pair at each interface point. When $M=1$, we refer to these properties as weak and strong entropy stability. A scheme possesses the corresponding property if the inequalities hold for every admissible input satisfying the standing assumptions and for every time step permitted by its stability restriction.
\end{definition}

\begin{remark}[Global entropy balance on a polygonal mesh]
	\label{rem:md-global-entropy}
	Let $\Omega$ denote the computational domain partitioned by $\Th$. For a domain with physical boundary, we define the total outward entropy flux across $\partial\Omega$ as
	\begin{equation}\label{eq:md-boundary-entropy-flux}
		\mathcal F_{\partial\Omega,r}^n
		:=\sum_{K\in\Th}
		\sum_{\substack{\nu=(e,q)\in\mathcal I_K^\partial\\
				e\subset\partial\Omega}}
		c_{K,\nu}\widehat q_{K,\nu,r}.
	\end{equation}
	Under the respective admissibility and solenoidal requirements, the \emph{global weak} and \emph{global strong} multi-entropy inequalities read
	\begin{equation}\label{eq:md-global-weak-boundary}
		\sum_{K\in\Th}|K|\eta_r(\overline{\bm U}_K^{n+1})
		\le\sum_{K\in\Th}|K|\mathcal S_{K,r}(\bm U_K^n)
		-\tau\mathcal F_{\partial\Omega,r}^n,
	\end{equation}
	\begin{equation}\label{eq:md-global-strong-boundary}
		\sum_{K\in\Th}|K|\mathcal S_{K,r}(\bm U_K^{n+1})
		\le\sum_{K\in\Th}|K|\mathcal S_{K,r}(\bm U_K^n)
		-\tau\mathcal F_{\partial\Omega,r}^n,
	\end{equation}
	respectively, for each $1\le r\le M$. Multiplying the local inequalities \eqref{eq:md-weak-entropy} and \eqref{eq:md-strong-entropy} by $|K|$ and summing over all $K\in\Th$, the interior numerical entropy fluxes cancel pairwise across interelement faces, while the identity $|K|\lambda_K=\tau$ yields the boundary flux term $\tau\mathcal F_{\partial\Omega,r}^n$. Although the modified fluxes $\bm G_{K,\nu}$ are nonconservative across cell interfaces due to normal magnetic jumps, the numerical entropy fluxes $\widehat q_{K,\nu,r}$ remain conservative and anti-symmetric, ensuring exact interior cancellation upon summation. For periodic boundary conditions, the boundary flux vanishes identically, whereby \eqref{eq:md-global-strong-boundary} guarantees that the total discrete entropy is non-increasing across successive strongly stable forward Euler steps. Furthermore, the entropy hierarchy and quadrature interpretation established in Remark~\ref{rem:weak-strong-distinction} hold verbatim in two and three dimensions.
\end{remark}

No magnetic divergence source term appears in the entropy budget \eqref{eq:md-weak-bound}. As shown in \eqref{eq:md-magnetic-cancellation} below, the Godunov--Powell interface correction and the LDF discrete Gauss identity \eqref{eq:md-discrete-gauss} jointly cancel the magnetic divergence contribution in the cell entropy estimate. Both stability criteria below establish weak multi-entropy stability in the sense of Definition~\ref{def:md-weak-strong-entropy}: the first directly bounds the entropy of each constituent state in the convex decomposition \eqref{eq:md-pp-decomposition}, whereas the second bounds the net cell entropy defect via relative entropies evaluated at the nodal states at time level $t^n$.

\subsection{A convex-decomposition criterion}
\label{subsec:md-weak-convex}

\subsubsection{Entropy bounds for the intermediate states}
To derive the interface entropy bounds, we define the intermediate entropy value
\begin{equation}\label{eq:md-hll-entropy-value}
	\mathscr E_r^{\rm HLL}
	:=\frac{a^+\eta_r(\bm U^+)-a^-\eta_r(\bm U^-)
		-q_{r,\bm n}(\bm U^+)+q_{r,\bm n}(\bm U^-)}{C_F}.
\end{equation}
This scalar quantity is invariant under orientation reversal and satisfies the identity
\begin{equation}\label{eq:md-hll-entropy-identities}
	\begin{aligned}
		\widehat q_r
		=q_{r,\bm n}(\bm U^-)
		+a^-[\mathscr E_r^{\rm HLL}-\eta_r(\bm U^-)]
		=q_{r,\bm n}(\bm U^+)
		+a^+[\mathscr E_r^{\rm HLL}-\eta_r(\bm U^+)].
	\end{aligned}
\end{equation}
Let $\mathscr E_{K,\nu,r}^{\rm HLL}$ denote its value at face node $\nu$ of cell $K$. Corresponding to the interior state $\bm\Xi_K$ in \eqref{eq:md-interior-state}, we define the scalar entropy value
\begin{equation}\label{eq:md-interior-entropy}
	\mathscr E_{K,r}^{\Xi}
	:=\frac{\sum_\nu c_{K,\nu}
		[\beta_{K,\nu}\eta_r(\bm U_{K,\nu})
		-q_{r,\bm n_{K,\nu}}(\bm U_{K,\nu})]}{D_K}.
\end{equation}
Substituting the entropy flux identities \eqref{eq:md-hll-entropy-identities} into \eqref{eq:md-weak-bound} yields the matching decomposition of the local entropy bound:
\begin{equation}\label{eq:md-bound-decomposition}
	\mathcal B_{K,r}^n
	=\sum_\alpha\omega_{K,\alpha}^\circ
	\eta_r(\widehat{\bm U}_K^{[\alpha]})
	+\lambda_KD_K\mathscr E_{K,r}^{\Xi}
	-\lambda_K\sum_\nu c_{K,\nu}a_{K,\nu}^-
	\mathscr E_{K,\nu,r}^{\rm HLL}.
\end{equation}

\begin{theorem}[Weak multi-entropy stability by convex decomposition]
	\label{thm:md-weak-convex}
	Under the hypotheses of Proposition~\ref{prop:md-pp}, suppose that the intermediate states satisfy
	\begin{equation}\label{eq:md-convex-entropy-conditions}
		\begin{aligned}
			\eta_r(\bm T^\pm)&\le\mathscr E_r^{\rm HLL}
			&&\text{at every interface point},\\
			\eta_r(\bm\Xi_K)&\le\mathscr E_{K,r}^{\Xi}
			&&\text{in every cell } K\in\Th,
		\end{aligned}
	\end{equation}
	for all $1\le r\le M$. Then the forward Euler cell-average update \eqref{eq:md-mean-update} is weakly multi-entropy stable in the sense of Definition~\ref{def:md-weak-strong-entropy}.
\end{theorem}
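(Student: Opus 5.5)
The plan is to mirror the one-dimensional proof of Theorem~\ref{thm:1d-weak-convex}: match the convex decomposition \eqref{eq:md-pp-decomposition} of $\overline{\bm U}_K^{n+1}$ term by term with the decomposition \eqref{eq:md-bound-decomposition} of the budget $\mathcal B_{K,r}^n$, apply Jensen's inequality to the first, and then bound each constituent entropy using the hypotheses \eqref{eq:md-convex-entropy-conditions}.

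\emph{Admissibility.} Fix a cell $K$. By Proposition~\ref{prop:md-pp}, $\overline{\bm U}_K^{n+1}\in\G$. Its proof also shows that every constituent state lies in $\G$: the interior nodes $\widehat{\bm U}_K^{[\alpha]}$, the interior splitting state $\bm\Xi_K$, and the corrected states $\bm T_{K,\nu}$ (the last by Lemma~\ref{lem:md-corrected-hll-admissibility}). Hence all entropies below are well defined. Moreover, the weights $\omega_{K,\alpha}^\circ$, $\lambda_KD_K$ and $-\lambda_Kc_{K,\nu}a_{K,\nu}^-$ are strictly positive and sum to one. Strict convexity of $\eta_r$ on $\G$ (Lemma~\ref{lem:harten-convexity}) then gives
\[
\eta_r(\overline{\bm U}_K^{n+1})\le\sum_\alpha\omega_{K,\alpha}^\circ\eta_r(\widehat{\bm U}_K^{[\alpha]})+\lambda_KD_K\,\eta_r(\bm\Xi_K)-\lambda_K\sum_\nu c_{K,\nu}a_{K,\nu}^-\,\eta_r(\bm T_{K,\nu}).
\]

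\emph{Matching the budget.} I would briefly re-verify \eqref{eq:md-bound-decomposition}. Insert the first identity of \eqref{eq:md-hll-entropy-identities}, in cell $K$'s outward orientation, into \eqref{eq:md-weak-bound}. For each face node $\nu$ the contribution becomes
\[
\omega_{K,\nu}\eta_r(\bm U_{K,\nu})-\lambda_Kc_{K,\nu}\bigl[q_{r,\bm n_{K,\nu}}(\bm U_{K,\nu})+a_{K,\nu}^-(\mathscr E_{K,\nu,r}^{\rm HLL}-\eta_r(\bm U_{K,\nu}))\bigr].
\]
Using $\beta_{K,\nu}=\omega_{K,\nu}/(\lambda_Kc_{K,\nu})+a_{K,\nu}^-$, this equals
\[
\lambda_Kc_{K,\nu}\bigl[\beta_{K,\nu}\eta_r(\bm U_{K,\nu})-q_{r,\bm n_{K,\nu}}(\bm U_{K,\nu})\bigr]-\lambda_Kc_{K,\nu}a_{K,\nu}^-\mathscr E_{K,\nu,r}^{\rm HLL}.
\]
Summing over $\nu$ and using \eqref{eq:md-interior-entropy} gives exactly the $\lambda_KD_K\mathscr E_{K,r}^\Xi$ term and the interface terms in \eqref{eq:md-bound-decomposition}, with the same weights as in \eqref{eq:md-pp-decomposition}. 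The Godunov--Powell source appears only in the states $\bm T_{K,\nu}$, not in the budget, so no divergence residual arises at this stage.

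\emph{Conclusion and main obstacle.} Subtract the Jensen bound from \eqref{eq:md-bound-decomposition}. The difference is a positive combination of $\mathscr E_{K,r}^\Xi-\eta_r(\bm\Xi_K)$ and $\mathscr E_{K,\nu,r}^{\rm HLL}-\eta_r(\bm T_{K,\nu})$, so it suffices that each of these is nonnegative. The step requiring care is orientation bookkeeping. Cell $K$ uses $\bm T_{K,\nu}=\bm T^-$ in its own outward orientation, and the neighbour uses the other corrected state, because reversal interchanges $\bm T^\pm$. Meanwhile $\mathscr E_r^{\rm HLL}$ is orientation invariant. Therefore the interface hypothesis, stated for both $\bm T^+$ and $\bm T^-$ with a common $\mathscr E_r^{\rm HLL}$, gives $\eta_r(\bm T_{K,\nu})\le\mathscr E_{K,\nu,r}^{\rm HLL}$ from either adjacent cell; at boundary points only $\bm T^-$ is needed. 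The interior hypothesis handles $\bm\Xi_K$. Hence $\eta_r(\overline{\bm U}_K^{n+1})\le\mathcal B_{K,r}^n$ for every $K$ and every $1\le r\le M$, with one update, time step and speed pair, which is the weak multi-entropy stability of Definition~\ref{def:md-weak-strong-entropy}.
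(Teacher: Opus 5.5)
Your proposal is correct and follows the paper's own proof. The paper likewise applies Jensen's inequality to the convex decomposition \eqref{eq:md-pp-decomposition}, invokes the statewise bounds \eqref{eq:md-convex-entropy-conditions}, and matches the result with \eqref{eq:md-bound-decomposition}. Your explicit re-derivation of \eqref{eq:md-bound-decomposition} and your orientation bookkeeping for $\bm T^\pm$ are correct details that the paper leaves implicit.
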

\begin{proof}
	By Proposition~\ref{prop:md-pp}, all constituent states in decomposition \eqref{eq:md-pp-decomposition} belong to $\G$, which ensures $\overline{\bm U}_K^{n+1}\in\G$. Applying the convexity of $\eta_r$ to \eqref{eq:md-pp-decomposition}, invoking the statewise bounds \eqref{eq:md-convex-entropy-conditions}, and recognizing the matching decomposition \eqref{eq:md-bound-decomposition} of $\mathcal B_{K,r}^n$, we obtain $\eta_r(\overline{\bm U}_K^{n+1})\le\mathcal B_{K,r}^n$ simultaneously for all $1\le r\le M$.
\end{proof}

\subsubsection{Finite wave speeds and a positive common step}
The interface entropy bounds are satisfied by suitably enlarging the wave speeds, whereas the interior bounds are enforced by restricting the time step. Both constructions rely on the convex mixing inequality \eqref{eq:1d-weak-completion-bound}, which holds on $\G$ for every entropy $\eta_r$ in the family.

At each interface point, let the initial wave speeds satisfy \eqref{eq:md-initial-speeds}, so that Lemma~\ref{lem:md-corrected-hll-admissibility} guarantees $\bm T^{\pm,0}\in\G$. As in the one-dimensional setting, select a partition parameter $0<\theta_F<1$ independent of $r$, satisfying $\theta_F^{\rm rev}=1-\theta_F$ under orientation reversal (such as $\theta_F=-a^{-,0}/C_F^0$). We define the Jensen gap quantities and initial defects by
\begin{equation}\label{eq:md-interface-gaps}
	\begin{aligned}
		\bm M_F&:=\theta_F\bm U^-+(1-\theta_F)\bm U^+,\qquad 
		\overline\eta_{F,r}
		:=\theta_F\eta_r(\bm U^-)+(1-\theta_F)\eta_r(\bm U^+),\\
		J_{F,r}&:=\overline\eta_{F,r}-\eta_r(\bm M_F),\qquad \qquad 
		g_{F,\pm,r}^0
		:=C_F^0[\mathscr E_r^{{\rm HLL},0}-\eta_r(\bm T^{\pm,0})].
	\end{aligned}
\end{equation}
Whenever $\bm U^-\neq\bm U^+$, the strict convexity of $\eta_r$ implies $J_{F,r}>0$ for all $1\le r\le M$. We then set
\begin{equation}\label{eq:md-convex-interface-speeds}
	\zeta_F:=\max_{\substack{1\le r\le M\\\varsigma\in\{-,+\}}}
	\frac{[-g_{F,\varsigma,r}^0]_+}{J_{F,r}}, \qquad 
	a^-:=a^{-,0}-\theta_F\zeta_F,
	\qquad a^+:=a^{+,0}+(1-\theta_F)\zeta_F.
\end{equation}
Because the source numerator $\delta B_{\bm n}\bm S(\bm U^\pm)$ is determined solely by the interface traces, the speed dilation mixes both corrected states $\bm T^\pm$ toward the common admissible state $\bm M_F$:
\begin{equation}\label{eq:md-interface-mixing}
	\bm T^\pm
	=\frac{C_F^0\bm T^{\pm,0}+\zeta_F\bm M_F}{C_F^0+\zeta_F}, \qquad 
	\mathscr E_r^{\rm HLL}
	=\frac{C_F^0\mathscr E_r^{{\rm HLL},0}
		+\zeta_F\overline\eta_{F,r}}
	{C_F^0+\zeta_F}.
\end{equation}
Applying the convexity estimate \eqref{eq:1d-weak-completion-bound} gives
\begin{equation*}
	(C_F^0+\zeta_F)[\mathscr E_r^{\rm HLL}-\eta_r(\bm T^\pm)]
	\ge g_{F,\pm,r}^0+\zeta_FJ_{F,r}\ge0,
\end{equation*}
which establishes the interface condition in \eqref{eq:md-convex-entropy-conditions}. If $\bm U^-=\bm U^+$, we set $\zeta_F=0$; then $\bm T^\pm=\bm U^\pm$, and the interface entropy bounds hold with equality.

With the interface wave speeds and numerical fluxes fixed, we evaluate the reference time steps $\tau_K^0$ from \eqref{eq:md-reference-step}. In each cell $K\in\Th$, define
\begin{equation}\label{eq:md-interior-gaps}
	\begin{aligned}
		\Omega_K&:=\sum_{\nu\in\mathcal I_K^\partial}\omega_{K,\nu},
		&\bm M_K&:=\frac1{\Omega_K}\sum_\nu\omega_{K,\nu}\bm U_{K,\nu},\\
		\overline\eta_{K,r}
		&:=\frac1{\Omega_K}\sum_\nu\omega_{K,\nu}\eta_r(\bm U_{K,\nu}),
		&J_{K,r}&:=\overline\eta_{K,r}-\eta_r(\bm M_K),\quad 
		g_{K,r}^0
		:=D_K^0[\mathscr E_{K,r}^{\Xi,0}-\eta_r(\bm\Xi_K^0)],
	\end{aligned}
\end{equation}
where superscript $0$ denotes evaluation at $\tau=\tau_K^0$. For any $0<\tau\le\tau_K^0$, let $t_K(\tau):=|K|\Omega_K(1/\tau-1/\tau_K^0)\ge0$. Recalling the definition of $\beta_{K,\nu}$ in \eqref{eq:md-interior-state}, we obtain
\begin{equation}\label{eq:md-interior-mixing}
	\begin{aligned}
		D_K(\tau)=D_K^0+t_K(\tau),\quad
		\bm\Xi_K(\tau)
		&=\frac{D_K^0\bm\Xi_K^0+t_K(\tau)\bm M_K}{D_K^0+t_K(\tau)},\\
		\mathscr E_{K,r}^{\Xi}(\tau)
		&=\frac{D_K^0\mathscr E_{K,r}^{\Xi,0}
			+t_K(\tau)\overline\eta_{K,r}}{D_K^0+t_K(\tau)}.
	\end{aligned}
\end{equation}
Decreasing the time step $\tau$ mixes the interior state $\bm\Xi_K(\tau)$ toward the fixed boundary average $\bm M_K$. Applying the convexity inequality \eqref{eq:1d-weak-completion-bound} yields
\begin{equation}\label{eq:md-interior-defect-bound}
	D_K(\tau)[\mathscr E_{K,r}^{\Xi}(\tau)-\eta_r(\bm\Xi_K(\tau))]
	\ge g_{K,r}^0+t_K(\tau)J_{K,r}.
\end{equation}
If the boundary states $\{\bm U_{K,\nu}\}_\nu$ are not all identical, the strict convexity of $\eta_r$ ensures $J_{K,r}>0$ for every $1\le r\le M$. Setting
\begin{equation}\label{eq:md-convex-step}
	\zeta_K:=\max_{1\le r\le M}\frac{[-g_{K,r}^0]_+}{J_{K,r}},
	\qquad
	\tau_K^{\rm cvx}
	:=\left(\frac1{\tau_K^0}+\frac{\zeta_K}{|K|\Omega_K}\right)^{-1},
\end{equation}
the right-hand side of \eqref{eq:md-interior-defect-bound} remains nonnegative for all $0<\tau\le\tau_K^{\rm cvx}$, establishing $\eta_r(\bm\Xi_K)\le\mathscr E_{K,r}^{\Xi}$. If all boundary states coincide, $\bm U_{K,\nu}\equiv\bm U_0$, the geometric closure identity $\sum_\nu c_{K,\nu}\bm n_{K,\nu}=\bm 0$ forces the weighted sums of the physical fluxes and directional entropy fluxes to vanish identically. In this degenerate case, $\bm\Xi_K=\bm U_0$ and $\mathscr E_{K,r}^{\Xi}=\eta_r(\bm U_0)$, so we may set $\zeta_K=0$ and $\tau_K^{\rm cvx}=\tau_K^0$.

\begin{proposition}[Realizability of the convex-decomposition criterion]
	\label{prop:md-convex-attainability}
	Assume that the high-order polynomial data at time $t^n$ satisfy the admissibility and LDF conditions with nodal values in $\G$. Then the speed modification \eqref{eq:md-convex-interface-speeds} yields finite shared interface speeds $a^-<0<a^+$. Under the conventions adopted for identical states, the global time-step bound satisfies
	\begin{equation}
		\tau_*^{\rm cvx}:=\min_{K\in\Th}\tau_K^{\rm cvx}>0,
	\end{equation}
	and every time step $0<\tau\le\tau_*^{\rm cvx}$ fulfills the hypotheses of Theorem~\ref{thm:md-weak-convex} simultaneously across all mesh cells $K\in\Th$ and all entropies $1\le r\le M$.
\end{proposition}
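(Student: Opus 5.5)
The proof assembles the constructions preceding the statement into the hypotheses of Theorem~\ref{thm:md-weak-convex}. It has three parts: the interfaces first, then the cells with the speeds frozen, and finally a minimum over the finite mesh.

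\emph{Interface step.} At each interface quadrature point I start from the baseline speeds \eqref{eq:md-initial-speeds}. These are finite because $\bm U^\pm\in\G$ gives $\rho^\pm>0$, $s_F<\infty$, and finite splitting bounds $\alpha_l,\alpha_r$. They satisfy $a^{-,0}\le -s_F<0<s_F\le a^{+,0}$, and Lemma~\ref{lem:md-corrected-hll-admissibility} gives $\bm T^{\pm,0}\in\G$, so every $\eta_r(\bm T^{\pm,0})$ and every $g^0_{F,\pm,r}$ is finite.

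If $\bm U^-\ne\bm U^+$, strict convexity of $\eta_r$ on $\G$ (Lemma~\ref{lem:harten-convexity}) together with $\theta_F\in(0,1)$ gives $J_{F,r}>0$. Hence $\zeta_F$ in \eqref{eq:md-convex-interface-speeds} is a finite maximum over the finitely many indices $r\le M$ and $\varsigma=\pm$. Since $\zeta_F\ge0$, the modified speeds satisfy $a^-\le a^{-,0}<0<a^{+,0}\le a^+$, so \eqref{eq:md-pp-speed-condition} holds. I then verify the mixing representation \eqref{eq:md-interface-mixing}. This is linear algebra: the numerators of $C_F\bm H$ and $C_F\mathscr E^{\rm HLL}_r$ are affine in $(a^-,a^+)$, and the source term $\delta B_{\bm n}\bm S(\bm U^\pm)$ is independent of the speeds. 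Consequently
\[
C_F\bm T^\pm=C_F^0\bm T^{\pm,0}+\zeta_F\bm M_F,\qquad C_F=C_F^0+\zeta_F .
\]
Applying \eqref{eq:1d-weak-completion-bound} then yields the interface inequalities. For the degenerate case $\bm U^-=\bm U^+$ I need $\delta B_{\bm n}=0$, so $\bm T^\pm=\bm H=\bm U^\pm$, and consistency gives equality.

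I also need the construction to be single-valued across shared faces. Under orientation reversal, $\theta_F\mapsto1-\theta_F$, $\bm M_F$ and $J_{F,r}$ are invariant, and the pair $\{g^0_{F,\pm,r}\}$ is swapped. So $\zeta_F$ is invariant, and $(a^-,a^+)\mapsto(-a^+,-a^-)$ is respected.

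\emph{Cell step.} With the speeds fixed, $\tau_K^0$ from \eqref{eq:md-reference-step} is positive, because each denominator is at least $2c_{K,\nu}\mathcal C>0$. For $\tau\le\tau_K^0$ the positivity CFL \eqref{eq:md-pp-cfl} holds strictly, so Proposition~\ref{prop:md-pp} applies, $\bm\Xi_K(\tau)\in\G$, and all quantities at $\tau_K^0$ are finite.

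The key identity is \eqref{eq:md-interior-mixing}. It follows from
\[
c_{K,\nu}\beta_{K,\nu}(\tau)=c_{K,\nu}\beta^0_{K,\nu}+|K|\omega_{K,\nu}\bigl(1/\tau-1/\tau_K^0\bigr),
\]
which is independent of the flux terms. Summing over $\nu$ gives $D_K=D_K^0+t_K$ and the stated convex mixtures toward $\bm M_K$ and $\overline\eta_{K,r}$. Inequality \eqref{eq:1d-weak-completion-bound} gives \eqref{eq:md-interior-defect-bound}.

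If the boundary states are not all equal, $J_{K,r}>0$ by strict Jensen, since all weights $\omega_{K,\nu}>0$. Then $\zeta_K$ is finite, $\tau_K^{\rm cvx}>0$, and for every $\tau\le\tau_K^{\rm cvx}$ we have $t_K(\tau)\ge\zeta_K$, so the interior inequality holds for all $r$. The degenerate case uses the closure identity \eqref{eq:md-discrete-gauss}, as noted before the statement. In that case I must confirm $\bm\Xi_K=\bm U_0$: the flux term drops out and $D_K^{-1}\sum_\nu c_{K,\nu}\beta_{K,\nu}=1$. The entropy value reduces analogously.

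\emph{Conclusion and main obstacle.} Since $\tau_K^{\rm cvx}\le\tau_K^0$ and there are finitely many cells, $\tau_*^{\rm cvx}>0$. Every $\tau\le\tau_*^{\rm cvx}$ satisfies the CFL condition \eqref{eq:md-pp-cfl}, all interface bounds (which are $\tau$-independent) and all interior bounds, so Theorem~\ref{thm:md-weak-convex} applies.

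The main obstacle is the interface mixing identity \eqref{eq:md-interface-mixing}. One must check carefully that enlarging the speeds by $(\theta_F\zeta_F,(1-\theta_F)\zeta_F)$ adds precisely $\zeta_F\bm M_F$ to $C_F\bm H$, using
\[
a^+\bm U^+-a^-\bm U^-\ \mapsto\ a^+\bm U^+-a^-\bm U^- + (1-\theta_F)\zeta_F\bm U^+ + \theta_F\zeta_F\bm U^- .
\]
One must also check that the Godunov--Powell correction, divided by the new width $C_F$, remains consistent with this mixture, because its numerator is unchanged.
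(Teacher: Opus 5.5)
Your proposal is correct and takes the same approach as the paper. The paper also fixes the interface speeds first through the Jensen-gap dilation \eqref{eq:md-convex-interface-speeds} and the mixing identity \eqref{eq:md-interface-mixing}, then restricts the step via \eqref{eq:md-interior-mixing} and \eqref{eq:md-convex-step}, treats the identical-state cases separately, and takes the minimum over the finite mesh. The paper's written proof only cites these constructions, whereas you additionally check the mixing identities, the orientation invariance of $\zeta_F$, and $\tau_K^{\rm cvx}\le\tau_K^0$.
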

\begin{proof}
	By Lemma~\ref{lem:md-corrected-hll-admissibility} and Proposition~\ref{prop:md-pp}, the initial states $\bm T^{\pm,0}$ and $\bm\Xi_K^0$ are strictly admissible. For distinct interface traces $\bm U^-\neq\bm U^+$ and non-identical boundary quadrature states, the strict convexity of the Harten entropies guarantees strictly positive Jensen gaps $J_{F,r}>0$ and $J_{K,r}>0$. Consequently, the dilation parameter $\zeta_F$ in \eqref{eq:md-convex-interface-speeds} and the cell parameter $\zeta_K$ in \eqref{eq:md-convex-step} are finite, yielding finite wave speeds and strictly positive local time-step bounds $\tau_K^{\rm cvx}>0$. The degenerate cases with identical states satisfy the inequalities trivially with $\zeta_F=0$ and $\zeta_K=0$. Because the mesh $\Th$ contains a finite number of elements and the entropy family $\{(\eta_r, q_{r,\bm n})\}_{r=1}^M$ is finite, the minimum over all cells $\tau_*^{\rm cvx} = \min_{K\in\Th}\tau_K^{\rm cvx}$ is strictly positive, completing the proof.
\end{proof}

\subsection{A criterion based on relative-entropy rates}
\label{subsec:md-weak-relative}

In multiple dimensions, spatial variations in the normal magnetic field introduce a first-order linear term into the uncorrected relative entropy flux. The Godunov--Powell formulation eliminates this linear growth, while the discrete LDF condition ensures that the corresponding magnetic correction cancels identically in the weighted boundary sum. These two complementary properties enable the extension of the one-dimensional relative-entropy rate framework of Subsection~\ref{subsec:1d-weak-relative} to polygonal and polyhedral meshes.

\subsubsection{Magnetic corrections and the entropy defect}
For any state $\bm U\in\G$, recall the entropy variables $\bm V_r(\bm U)=\nabla\eta_r(\bm U)$ and relative entropy $\Hrel_r(\bm U\mid\bm W)$ from \eqref{eq:1d-weak-relative-quantities}. We define the directional entropy potential $\psi_{r,\bm n}$ and the magnetic compatibility coefficient $\phi_r$ by
\begin{equation}\label{eq:md-entropy-potentials}
	\begin{aligned}
		\psi_{r,\bm n}(\bm U)
		:=\bm V_r(\bm U)^\top\bm F_{\bm n}(\bm U)-q_{r,\bm n}(\bm U),\qquad
		\phi_r(\bm U)
		:=\bm V_r(\bm U)^\top\bm S(\bm U)
		=\frac{(\gamma-1)\rho f_r'(s)}{p}(\bm v\cdot\bm B).
	\end{aligned}
\end{equation}
As established in Appendix~\ref{app:harten}, the Godunov--Powell compatibility identity on $\G$ reads
\begin{equation}\label{eq:md-powell-compatibility}
	Dq_{r,\bm n}(\bm U)
	=\bm V_r(\bm U)^\top D\bm F_{\bm n}(\bm U)
	+\phi_r(\bm U)DB_{\bm n}.
\end{equation}
Here $D$ denotes the Jacobian with respect to the conservative variables $\bm U$, so that $Dq_{r,\bm n}$ is a row vector, $\bm V_r$ is a column vector, and $DB_{\bm n}=(0,\bm0_3^\top,\bm n^\top,0)$. We define the Powell-corrected relative entropy flux by
\begin{equation}\label{eq:md-powell-relative-flux}
	\begin{aligned}
		\Qrel_{r,\bm n}(\bm U\mid\bm W)
		:={}&q_{r,\bm n}(\bm U)-q_{r,\bm n}(\bm W)
		-\bm V_r(\bm W)^\top
		[\bm F_{\bm n}(\bm U)-\bm F_{\bm n}(\bm W)]\\
		&-\phi_r(\bm W)[B_{\bm n}(\bm U)-B_{\bm n}(\bm W)].
	\end{aligned}
\end{equation}
Differentiating \eqref{eq:md-powell-relative-flux} with respect to $\bm U$ and invoking compatibility identity \eqref{eq:md-powell-compatibility}, we find that the first Fr\'echet derivative $D_{\bm U}\Qrel_{r,\bm n}(\bm U\mid\bm W)$ vanishes at $\bm U=\bm W$. Taylor's theorem together with the positive definiteness of the Hessian $D^2\eta_r$ (Lemma~\ref{lem:harten-convexity}) then guarantees the existence of constants $c_r,C_r>0$ on any compact convex subset of $\G$ such that
\begin{equation}\label{eq:md-quadratic-bounds}
	\Hrel_r(\bm U\mid\bm W)\ge c_r\norm{\bm U-\bm W}^2,
	\qquad
	|\Qrel_{r,\bm n}(\bm U\mid\bm W)|
	\le C_r\norm{\bm U-\bm W}^2,
\end{equation}
uniformly for all unit normal vectors $\bm n$.

\begin{remark}[Magnetic correction and bounded relative-entropy ratios]
	\label{rem:md-relative-flux}
	In one dimension, the normal magnetic field is strictly constant ($B_1\equiv B_{\rm const}$), and compatibility relation \eqref{eq:1d-compatibility} ensures that the relative entropy flux is quadratic in $\bm U-\bm W$. In multiple dimensions, evaluating the uncorrected relative entropy flux for arbitrary states in $\G$ yields the Taylor expansion
	\begin{equation}\label{eq:md-uncorrected-relative-expansion}
		\begin{aligned}
			&q_{r,\bm n}(\bm U)-q_{r,\bm n}(\bm W)
			-\bm V_r(\bm W)^\top
			[\bm F_{\bm n}(\bm U)-\bm F_{\bm n}(\bm W)]\\
			&\qquad=\phi_r(\bm W)[B_{\bm n}(\bm U)-B_{\bm n}(\bm W)]
			+O(\norm{\bm U-\bm W}^2).
		\end{aligned}
	\end{equation}
	Along any perturbation path $\bm U=\bm W+\varepsilon\bm Z$ with $\phi_r(\bm W)DB_{\bm n}[\bm Z]\ne0$, the ratio of this uncorrected flux to the relative entropy $\Hrel_r(\bm U\mid\bm W)\sim O(\varepsilon^2)$ diverges at the rate $O(\varepsilon^{-1})$ as $\varepsilon\to0$. The Powell correction in \eqref{eq:md-powell-relative-flux} eliminates this leading-order linear defect, ensuring via \eqref{eq:md-quadratic-bounds} that the ratio $|\Qrel_{r,\bm n}(\bm U\mid\bm W)|/\Hrel_r(\bm U\mid\bm W)$ remains uniformly bounded by $C_r/c_r$ on any compact convex subset of $\G$.
\end{remark}

For any given numerical flux pair $(\bm G_{K,\nu},\widehat q_{K,\nu,r})$ and arbitrary reference state $\bm W\in\G$, we define the corrected boundary defect term
\begin{equation}\label{eq:md-side-defect}
	P_{K,\nu,r}(\bm W):=\psi_{r,\bm n_{K,\nu}}(\bm W)
	-\bm V_r(\bm W)^\top\bm G_{K,\nu}+\widehat q_{K,\nu,r}
	+\phi_r(\bm W)[B_{\bm n_{K,\nu}}(\bm W)
	-B_{\bm n_{K,\nu}}(\bm U_{K,\nu})].
\end{equation}
This serves as the multidimensional analogue of \eqref{eq:1d-weak-side-defects}, incorporating outward normal vectors and the Powell magnetic correction from \eqref{eq:md-powell-relative-flux}. When the numerical fluxes coincide with the physical fluxes evaluated at the face trace,
\begin{equation}\label{eq:physical-fluxes}
	(\bm G_{K,\nu},\widehat q_{K,\nu,r})
	=\bigl(\bm F_{\bm n_{K,\nu}}(\bm U_{K,\nu}),
	q_{r,\bm n_{K,\nu}}(\bm U_{K,\nu})\bigr),
\end{equation}
the boundary defect reduces identically to the Powell-corrected relative entropy flux:
\begin{equation*}
	P_{K,\nu,r}(\bm W)
	=\Qrel_{r,\bm n_{K,\nu}}(\bm U_{K,\nu}\mid\bm W).
\end{equation*}
Summing \eqref{eq:md-side-defect} over all boundary nodes $\nu$ with weights $c_{K,\nu}$ and invoking the discrete Gauss identities \eqref{eq:md-discrete-gauss}, the magnetic correction terms cancel identically, yielding the exact summation identity
\begin{equation}\label{eq:md-magnetic-cancellation}
	\sum_\nu c_{K,\nu}P_{K,\nu,r}(\bm W)
	=-\bm V_r(\bm W)^\top\sum_\nu c_{K,\nu}\bm G_{K,\nu}
	+\sum_\nu c_{K,\nu}\widehat q_{K,\nu,r}.
\end{equation}
Specializing to the updated cell average $\bm W=\overline{\bm U}_K^{n+1}\in\G$, we expand the local entropy defect $\mathcal B_{K,r}^n-\eta_r(\overline{\bm U}_K^{n+1})$. Using cell quadrature \eqref{eq:md-cell-average} and the definition of relative entropy,
\begin{equation*}
	\begin{aligned}
		\mathcal B_{K,r}^n-\eta_r(\bm W)
		={}&\mathcal S_{K,r}(\bm U_K^n)-\eta_r(\bm W)
		-\lambda_K\sum_\nu c_{K,\nu}\widehat q_{K,\nu,r}\\
		={}&\sum_\alpha\omega_{K,\alpha}^\circ
		\Hrel_r(\widehat{\bm U}_K^{[\alpha]}\mid\bm W)
		+\sum_\nu\omega_{K,\nu}
		\Hrel_r(\bm U_{K,\nu}\mid\bm W)\\
		&+\bm V_r(\bm W)^\top(\overline{\bm U}_K^n-\bm W)
		-\lambda_K\sum_\nu c_{K,\nu}\widehat q_{K,\nu,r}.
	\end{aligned}
\end{equation*}
Since the forward Euler update \eqref{eq:md-mean-update} implies $\overline{\bm U}_K^n-\bm W = \lambda_K\sum_\nu c_{K,\nu}\bm G_{K,\nu}$, substituting this into the expansion and employing cancellation identity \eqref{eq:md-magnetic-cancellation} yields the exact multidimensional entropy defect identity
\begin{equation}\label{eq:md-defect-identity}
	\mathcal B_{K,r}^n-\eta_r(\bm W)
	=\sum_\alpha\omega_{K,\alpha}^\circ
	\Hrel_r(\widehat{\bm U}_K^{[\alpha]}\mid\bm W)
	+\sum_\nu\omega_{K,\nu}
	\Hrel_r(\bm U_{K,\nu}\mid\bm W)
	-\lambda_K\sum_\nu c_{K,\nu}P_{K,\nu,r}(\bm W).
\end{equation}

\begin{remark}[Purpose of the boundary correction]
	\label{rem:md-reference-correction}
	The magnetic correction in \eqref{eq:md-side-defect} vanishes under boundary quadrature weights because the geometric closure and LDF identities \eqref{eq:md-discrete-gauss} give
	\begin{equation*}
		\phi_r(\bm W)\sum_\nu c_{K,\nu}
		[B_{\bm n_{K,\nu}}(\bm W)-B_{\bm n_{K,\nu}}(\bm U_{K,\nu})] = 0.
	\end{equation*}
	Consequently, this correction preserves the exact defect balance \eqref{eq:md-defect-identity} globally on cell $K$ while rendering each individual boundary term $P_{K,\nu,r}(\bm W)$ locally quadratic whenever \eqref{eq:physical-fluxes} is satisfied. Under \eqref{eq:physical-fluxes}, the quadratic bounds \eqref{eq:md-quadratic-bounds} imply
	\begin{equation*}
		|P_{K,\nu,r}(\bm W)|
		\le\frac{C_r}{c_r}\Hrel_r(\bm U_{K,\nu}\mid\bm W)
	\end{equation*}
	on any compact convex subset of $\G$, which controls the boundary defect rates as $\bm W\to\bm U_{K,\nu}$.
\end{remark}

\subsubsection{Boundary control and finite rates}
For wave speeds satisfying the strict positivity condition \eqref{eq:md-pp-speed-condition}, we employ the reference step \eqref{eq:md-reference-step} to define the compact segment of candidate cell averages:
\begin{equation}\label{eq:md-reference-path}
	\mathscr W_K
	:=\left\{\overline{\bm U}_K^n
	-\frac{\vartheta}{|K|}\sum_\nu c_{K,\nu}\bm G_{K,\nu}:
	0\le\vartheta\le\tau_K^0\right\}\subset\G.
\end{equation}
By Proposition~\ref{prop:md-pp} and the convexity of $\G$, $\mathscr W_K\subset\G$. We define the one-sided relative-entropy defect rates by
\begin{equation}\label{eq:md-relative-rates}
	\mathfrak c_{K,\nu,r}
	:=\sup_{\substack{\bm W\in\mathscr W_K\\\bm W\ne\bm U_{K,\nu}}}
	\frac{[P_{K,\nu,r}(\bm W)]_+}
	{\Hrel_r(\bm U_{K,\nu}\mid\bm W)},
	\qquad
	\mathfrak c_{K,\nu}:=\max_{1\le r\le M}\mathfrak c_{K,\nu,r},
\end{equation}
with the supremum over an empty set taken as zero. Whenever these rates are finite and $P_{K,\nu,r}(\bm U_{K,\nu})\le0$ if $\bm U_{K,\nu}\in\mathscr W_K$, we obtain the uniform bound
\begin{equation}\label{eq:md-rate-bound}
	P_{K,\nu,r}(\bm W)
	\le\mathfrak c_{K,\nu,r}
	\Hrel_r(\bm U_{K,\nu}\mid\bm W),
	\qquad \forall\,\bm W\in\mathscr W_K.
\end{equation}
Substituting \eqref{eq:md-rate-bound} with $\bm W=\overline{\bm U}_K^{n+1}$ into \eqref{eq:md-defect-identity} yields
\begin{equation}\label{eq:md-controlled-defect}
	\mathcal B_{K,r}^n-\eta_r(\overline{\bm U}_K^{n+1})
	\ge\sum_\alpha\omega_{K,\alpha}^\circ
	\Hrel_r(\widehat{\bm U}_K^{[\alpha]}\mid\overline{\bm U}_K^{n+1})
	+\sum_\nu
	(\omega_{K,\nu}-\lambda_Kc_{K,\nu}\mathfrak c_{K,\nu,r})
	\Hrel_r(\bm U_{K,\nu}\mid\overline{\bm U}_K^{n+1}).
\end{equation}
Because the relative entropy is nonnegative, establishing weak multi-entropy stability reduces to selecting interface wave speeds that ensure the finiteness of the rates $\mathfrak c_{K,\nu,r}$ and to restricting the time step so that each coefficient $\omega_{K,\nu}-\lambda_Kc_{K,\nu}\mathfrak c_{K,\nu,r}$ is nonnegative.

At each boundary quadrature node $\nu$, we define the one-sided entropy dissipation rate by
\begin{equation}\label{eq:md-one-sided-dissipation}
	d_{K,\nu,r}:=\bm V_r(\bm U_{K,\nu})^\top\bm G_{K,\nu}
	-\widehat q_{K,\nu,r}
	-\psi_{r,\bm n_{K,\nu}}(\bm U_{K,\nu}).
\end{equation}
Across an oriented interface, let $d_r^-$ and $d_r^+$ denote the values evaluated on the two sides using the respective outward unit normal vectors. Comparing \eqref{eq:md-one-sided-dissipation} with \eqref{eq:md-side-defect} shows that $P_{K,\nu,r}(\bm U_{K,\nu})=-d_{K,\nu,r}$. To control the quotient in \eqref{eq:md-relative-rates}, we impose at every boundary node and for each $1\le r\le M$ the dissipation dichotomy:
\begin{equation}\label{eq:md-side-condition}
	d_{K,\nu,r}>0\quad\text{or}\quad
	(\bm G_{K,\nu},\widehat q_{K,\nu,r})
	=(\bm F_{\bm n_{K,\nu}}(\bm U_{K,\nu}),
	q_{r,\bm n_{K,\nu}}(\bm U_{K,\nu})).
\end{equation}

\begin{lemma}[Finiteness of the relative-entropy rates]
	\label{lem:md-finite-rates}
	For fixed wave speeds satisfying \eqref{eq:md-pp-speed-condition} and \eqref{eq:md-side-condition}, the rates in \eqref{eq:md-relative-rates} are finite and satisfy \eqref{eq:md-rate-bound} throughout $\mathscr W_K$.
\end{lemma}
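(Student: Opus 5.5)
The argument follows the proof of Lemma~\ref{lem:1d-weak-finite-rates}. Fix $K$, a boundary node $\nu$, and an entropy index $r$. The set $\mathscr W_K$ is a compact segment contained in $\G$ by Proposition~\ref{prop:md-pp}. Let $\Kc$ be the convex hull of $\mathscr W_K\cup\{\bm U_{K,\nu}\}$. Then $\Kc$ is compact and convex, and it lies in $\G$ because $\G$ is convex and $\bm U_{K,\nu}\in\G$. All estimates below are taken on $\Kc$.

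The map $\bm W\mapsto P_{K,\nu,r}(\bm W)$ is continuous on $\Kc$, since $\bm V_r$, $\psi_{r,\bm n}$ and $\phi_r$ are smooth on $\G$ and the fluxes $\bm G_{K,\nu}$, $\widehat q_{K,\nu,r}$ are frozen. The map $\bm W\mapsto\Hrel_r(\bm U_{K,\nu}\mid\bm W)$ is also continuous. By strict convexity it is strictly positive for $\bm W\neq\bm U_{K,\nu}$. It is therefore bounded below by a positive constant on $\{\bm W\in\mathscr W_K:\norm{\bm W-\bm U_{K,\nu}}\ge\epsilon\}$ for each $\epsilon>0$. Away from the trace, the quotient in \eqref{eq:md-relative-rates} is thus bounded, and only the limit $\bm W\to\bm U_{K,\nu}$ needs attention. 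If $\bm U_{K,\nu}\notin\mathscr W_K$, this limit cannot occur and the rate is finite by compactness alone.

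I then split into the two cases of the dichotomy \eqref{eq:md-side-condition}.

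\emph{Case $d_{K,\nu,r}>0$.} The identity $P_{K,\nu,r}(\bm U_{K,\nu})=-d_{K,\nu,r}<0$ and continuity give a neighborhood of $\bm U_{K,\nu}$ on which $[P_{K,\nu,r}]_+\equiv0$. The quotient therefore vanishes near the trace.

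\emph{Case \eqref{eq:physical-fluxes} holds.} The defect reduces exactly to $P_{K,\nu,r}(\bm W)=\Qrel_{r,\bm n_{K,\nu}}(\bm U_{K,\nu}\mid\bm W)$. Applying \eqref{eq:md-quadratic-bounds} on $\Kc$ gives
\[
[P_{K,\nu,r}(\bm W)]_+\le C_r\norm{\bm U_{K,\nu}-\bm W}^2\le\frac{C_r}{c_r}\Hrel_r(\bm U_{K,\nu}\mid\bm W),
\]
as in Remark~\ref{rem:md-reference-correction}. The bound \eqref{eq:md-quadratic-bounds} relies on the Powell compatibility identity \eqref{eq:md-powell-compatibility}, which removes the linear term $\phi_r(\bm W)[B_{\bm n}(\bm U)-B_{\bm n}(\bm W)]$. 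This is exactly why the magnetic correction was built into $P_{K,\nu,r}$. Without it, Remark~\ref{rem:md-relative-flux} shows the ratio blows up like $O(\varepsilon^{-1})$. Since $\bm U_{K,\nu}$ and $\bm W$ are arbitrary points of $\Kc$ here, no LDF or jump assumption on $B_{\bm n}$ is needed.

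In both cases $\mathfrak c_{K,\nu,r}<\infty$, and taking the maximum over the finitely many $r$ keeps $\mathfrak c_{K,\nu}$ finite.

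It remains to verify \eqref{eq:md-rate-bound}. For $\bm W\neq\bm U_{K,\nu}$ it follows directly from the definition of the supremum, using $P\le[P]_+$. At $\bm W=\bm U_{K,\nu}$, if this point lies in $\mathscr W_K$, we have $P_{K,\nu,r}(\bm U_{K,\nu})=-d_{K,\nu,r}\le0$ in both cases, because $d_{K,\nu,r}=0$ under \eqref{eq:physical-fluxes} by the definitions of $\psi$ and \eqref{eq:md-side-defect}. The right-hand side $\mathfrak c_{K,\nu,r}\Hrel_r(\bm U_{K,\nu}\mid\bm U_{K,\nu})$ equals $0$, so the inequality holds there as well.

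The main point needing care is that the quadratic constants $c_r,C_r$ hold uniformly on $\Kc$. This requires the Hessian of $\eta_r$ to be positive definite on all of $\G$ (Lemma~\ref{lem:harten-convexity}), in contrast to the tangent slice used in one dimension. It also requires the Taylor remainder of $\Qrel_{r,\bm n}$ to be controlled, which follows from its $C^2$ regularity on the compact set $\Kc$.
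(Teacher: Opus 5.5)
Your proposal is correct and follows the paper's proof essentially step by step. Near the trace you use the dissipation dichotomy: in one branch, continuity gives the sign of $P_{K,\nu,r}$; in the other, $P_{K,\nu,r}$ reduces to $\Qrel_{r,\bm n_{K,\nu}}(\bm U_{K,\nu}\mid\bm W)$ and the quadratic bounds apply. Away from the trace, compactness of $\mathscr W_K$ bounds the quotient, and at the trace point itself $P_{K,\nu,r}(\bm U_{K,\nu})=-d_{K,\nu,r}\le0$ settles the estimate. Two details you make explicit are left implicit in the paper: the choice of the compact convex hull of $\mathscr W_K\cup\{\bm U_{K,\nu}\}$ as the set carrying the quadratic constants, and the check that $d_{K,\nu,r}=0$ on the physical-flux branch.
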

\begin{proof}
	Under $d_{K,\nu,r}>0$, continuity implies that $P_{K,\nu,r}(\bm W)<0$, and hence $[P_{K,\nu,r}(\bm W)]_+=0$, in an open neighborhood of $\bm U_{K,\nu}$. When the flux equalities in \eqref{eq:md-side-condition} hold, the boundary defect coincides with the Powell-corrected relative flux, $P_{K,\nu,r}(\bm W)=\Qrel_{r,\bm n_{K,\nu}}(\bm U_{K,\nu}\mid\bm W)$, so quadratic estimates \eqref{eq:md-quadratic-bounds} bound the quotient near $\bm U_{K,\nu}$. Outside that neighborhood, compactness of $\mathscr W_K$ and strict positivity $\Hrel_r(\bm U_{K,\nu}\mid\bm W)>0$ bound the continuous quotient. Finally, at the trace point itself, $P_{K,\nu,r}(\bm U_{K,\nu})=-d_{K,\nu,r}\le0$ while $\Hrel_r(\bm U_{K,\nu}\mid\bm U_{K,\nu})=0$, confirming that \eqref{eq:md-rate-bound} holds throughout $\mathscr W_K$.
\end{proof}

\subsubsection{Explicit speeds and the common time-step bound}
For distinct interface traces $\bm U^-\neq\bm U^+$, we define the relative-entropy flux velocities
\begin{equation}\label{eq:md-relative-speeds}
	\ell_r^-:=\frac{\Qrel_{r,\bm n}(\bm U^-\mid\bm U^+)}
	{\Hrel_r(\bm U^-\mid\bm U^+)},
	\qquad
	\ell_r^+:=\frac{\Qrel_{r,\bm n}(\bm U^+\mid\bm U^-)}
	{\Hrel_r(\bm U^+\mid\bm U^-)}.
\end{equation}
Substituting the HLL flux \eqref{eq:md-hll-flux}, the numerical entropy flux \eqref{eq:md-hll-entropy-flux}, and the Powell-modified operator \eqref{eq:md-powell-operators} into \eqref{eq:md-one-sided-dissipation} yields the factorization
\begin{equation}\label{eq:md-dissipation-factors}
	\begin{aligned}
		d_r^-=\frac{-a^-}{C_F}(a^+-\ell_r^+)
		\Hrel_r(\bm U^+\mid\bm U^-),\qquad 
		d_r^+=\frac{a^+}{C_F}(\ell_r^--a^-)
		\Hrel_r(\bm U^-\mid\bm U^+).
	\end{aligned}
\end{equation}
The magnetic term in \eqref{eq:md-powell-relative-flux} accounts directly for the Godunov--Powell source term in \eqref{eq:md-powell-operators}, thereby preserving this product structure. Using the initial wave-speed spread $C_F^0$ from \eqref{eq:md-initial-speeds}, we define the extremal directional envelopes
\begin{equation*}
	\Lambda_-:=\min_{1\le r\le M}\ell_r^-,
	\qquad \Lambda_+:=\max_{1\le r\le M}\ell_r^+,
\end{equation*}
and set the augmented interface wave speeds as
\begin{equation}\label{eq:md-joint-speeds}
	\begin{aligned}
		a^-:=\frac{a^{-,0}+\Lambda_-
			-\sqrt{(a^{-,0}-\Lambda_-)^2+(C_F^0)^2}}2,\qquad 
		a^+:=\frac{a^{+,0}+\Lambda_+
			+\sqrt{(a^{+,0}-\Lambda_+)^2+(C_F^0)^2}}2.
	\end{aligned}
\end{equation}
By construction, these explicit speeds satisfy $a^-<\min\{a^{-,0},\Lambda_-\}$ and $a^+>\max\{a^{+,0},\Lambda_+\}$, preserving positivity \eqref{eq:md-pp-speed-condition} and ensuring $d_r^\pm>0$ via \eqref{eq:md-dissipation-factors} for all $1\le r\le M$. By \eqref{eq:md-quadratic-bounds}, the ratios $\ell_r^\pm$ remain bounded on compact subsets of $\G$, so the speeds remain finite. If $\bm U^-=\bm U^+$, we set $(a^-,a^+)=(a^{-,0},a^{+,0})$ without evaluating the indeterminate ratios; flux consistency then satisfies the equality branch of \eqref{eq:md-side-condition}. Both cases respect orientation invariance and yield finite interface speeds fulfilling \eqref{eq:md-pp-speed-condition} and \eqref{eq:md-side-condition}.

With the interface wave speeds and numerical fluxes determined, we evaluate the cell reference steps $\tau_K^0$, segments $\mathscr W_K$, and defect rates $\mathfrak c_{K,\nu}$, and define the global time-step threshold
\begin{equation}\label{eq:md-relative-step}
	\tau_*^{\rm rel}:=\min_{K\in\Th}\left\{\tau_K^0,
	\min_{\nu\in\mathcal I_K^\partial}\frac{|K|\omega_{K,\nu}}
	{c_{K,\nu}\mathfrak c_{K,\nu}}\right\}>0,
\end{equation}
where division by zero is interpreted as $+\infty$. By Lemma~\ref{lem:md-finite-rates} and $\tau_K^0>0$, the threshold $\tau_*^{\rm rel}$ is strictly positive on any finite mesh $\Th$. We emphasize that numerical fluxes and rates remain fixed while the time step is restricted. Evaluating \eqref{eq:md-relative-step} directly requires upper bounds on the suprema in \eqref{eq:md-relative-rates}. Alternatively, one may initialize with a PP-admissible trial step and bisect until \eqref{eq:md-weak-entropy} holds; the strict positivity of $\tau_*^{\rm rel}$ guarantees finite termination in exact arithmetic. In contrast, the convex-decomposition criterion in Subsection~\ref{subsec:md-weak-convex} yields an explicit closed-form time-step bound.

\begin{theorem}[Weak multi-entropy stability by relative-entropy rates]
	\label{thm:md-weak-relative}
	Under the quadrature, admissibility, and LDF assumptions above, suppose that the wave speeds satisfy \eqref{eq:md-pp-speed-condition} and \eqref{eq:md-side-condition}. Then the forward Euler cell-average update \eqref{eq:md-mean-update} is weakly multi-entropy stable in the sense of Definition~\ref{def:md-weak-strong-entropy} for every $0<\tau\le\tau_*^{\rm rel}$.
\end{theorem}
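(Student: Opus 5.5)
The plan mirrors the proof of Theorem~\ref{thm:1d-weak-relative}, with the multidimensional ingredients already assembled above. Fix a cell $K\in\Th$ and a time step $0<\tau\le\tau_*^{\rm rel}$, so that $\lambda_K=\tau/|K|$.

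\emph{Admissibility.} Since $\tau\le\tau_*^{\rm rel}\le\tau_K^0$, and $\tau_K^0$ is defined with safety factor $\sigma<1$, the positivity CFL condition \eqref{eq:md-pp-cfl} holds strictly. The wave speeds satisfy \eqref{eq:md-pp-speed-condition}, so Proposition~\ref{prop:md-pp} gives $\overline{\bm U}_K^{n+1}\in\G$. Moreover, $\overline{\bm U}_K^{n+1}$ is the point of the segment \eqref{eq:md-reference-path} with parameter $\vartheta=\tau$, so $\bm W:=\overline{\bm U}_K^{n+1}\in\mathscr W_K$. This is the only place where the reference-path construction is used.

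\emph{Exact defect identity.} I would invoke the identity \eqref{eq:md-defect-identity} at this $\bm W$. It was derived from the cell quadrature \eqref{eq:md-cell-average}, the update \eqref{eq:md-mean-update}, and the magnetic cancellation \eqref{eq:md-magnetic-cancellation}. I expect this cancellation to be the main structural obstacle, because the Powell correction terms $\phi_r(\bm W)[B_{\bm n_{K,\nu}}(\bm W)-B_{\bm n_{K,\nu}}(\bm U_{K,\nu})]$ must vanish after the weighted sum over $\nu$. Using $\phi_r(\bm W)$ as a common factor reduces this to the two discrete Gauss identities \eqref{eq:md-discrete-gauss}. The first, $\sum_\nu c_{K,\nu}\bm n_{K,\nu}=\bm 0$, removes the $B_{\bm n}(\bm W)$ terms and also the $\psi_{r,\bm n}(\bm W)$ terms, since $\psi_{r,\bm n}$ is linear in $\bm n$. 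The second, the LDF identity, removes the trace terms. Once the identity is in hand, the remaining work is bookkeeping.

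\emph{Boundary control.} Lemma~\ref{lem:md-finite-rates} applies because the speeds satisfy \eqref{eq:md-pp-speed-condition} and the dichotomy \eqref{eq:md-side-condition}. It gives finite rates $\mathfrak c_{K,\nu,r}\le\mathfrak c_{K,\nu}$ and the bound \eqref{eq:md-rate-bound} on all of $\mathscr W_K$, in particular at $\bm W$. Inserting this bound into \eqref{eq:md-defect-identity} yields \eqref{eq:md-controlled-defect}.

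The definition \eqref{eq:md-relative-step} gives $\tau\le |K|\omega_{K,\nu}/(c_{K,\nu}\mathfrak c_{K,\nu})$, which is equivalent to $\lambda_Kc_{K,\nu}\mathfrak c_{K,\nu,r}\le\omega_{K,\nu}$ for every $\nu$ and $r$. When $\mathfrak c_{K,\nu}=0$ this holds trivially. Hence every coefficient in \eqref{eq:md-controlled-defect} is nonnegative. Together with $\omega^\circ_{K,\alpha}>0$ and $\Hrel_r\ge0$, this gives $\eta_r(\overline{\bm U}_K^{n+1})\le\mathcal B_{K,r}^n$.

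\emph{Simultaneity.} The wave speeds, fluxes, and $\tau$ are common to all $r$, and $\mathfrak c_{K,\nu}$ is the maximum over $r$. Therefore the inequality holds simultaneously for $1\le r\le M$ and for every $K\in\Th$, which is the statement of Definition~\ref{def:md-weak-strong-entropy}.
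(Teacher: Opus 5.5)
Your proposal is correct and follows the paper's proof essentially step for step. You place $\overline{\bm U}_K^{n+1}$ on the segment $\mathscr W_K$ via Proposition~\ref{prop:md-pp} and $\tau\le\tau_K^0$, invoke Lemma~\ref{lem:md-finite-rates} to obtain \eqref{eq:md-controlled-defect}, and use \eqref{eq:md-relative-step} to make every boundary coefficient nonnegative. Your extra check that both the $\psi_{r,\bm n}(\bm W)$ terms and the Powell correction terms drop out of \eqref{eq:md-magnetic-cancellation} via the two discrete Gauss identities is accurate, and it makes explicit what the paper uses implicitly.
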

\begin{proof}
	By Proposition~\ref{prop:md-pp}, any time step $0<\tau\le\tau_*^{\rm rel}\le\tau_K^0$ ensures that $\overline{\bm U}_K^{n+1}\in\mathscr W_K\subset\G$. Applying Lemma~\ref{lem:md-finite-rates} and the definition of $\tau_*^{\rm rel}$ in \eqref{eq:md-relative-step}, all boundary coefficients in \eqref{eq:md-controlled-defect} satisfy $\omega_{K,\nu}-\lambda_Kc_{K,\nu}\mathfrak c_{K,\nu,r}\ge0$. Because the relative entropy is nonnegative, \eqref{eq:md-controlled-defect} implies $\eta_r(\overline{\bm U}_K^{n+1})\le\mathcal B_{K,r}^n$ simultaneously for all $1\le r\le M$.
\end{proof}

\subsection{Comparison and the role of discrete divergence control}
\label{subsec:md-comparison}
The interface bridge identities established in Subsection~\ref{subsec:1d-weak-comparison} extend directly to the Powell-corrected states $\bm T^\pm$. Combining the modified flux identities \eqref{eq:md-hll-powell-identities} and the entropy flux relations \eqref{eq:md-hll-entropy-identities} with the outward normal of each adjacent cell, we obtain
\begin{equation}\label{eq:md-interface-bridge}
	\begin{aligned}
		d_r^-=(-a^-)\bigl[\mathscr E_r^{\rm HLL}-\eta_r(\bm T^-)
		+\Hrel_r(\bm T^-\mid\bm U^-)\bigr],\qquad
		d_r^+=a^+\bigl[\mathscr E_r^{\rm HLL}-\eta_r(\bm T^+)
		+\Hrel_r(\bm T^+\mid\bm U^+)\bigr].
	\end{aligned}
\end{equation}
Under the convex-decomposition conditions \eqref{eq:md-convex-entropy-conditions}, both the entropy difference $\mathscr E_r^{\rm HLL}-\eta_r(\bm T^\pm)$ and the relative entropy $\Hrel_r(\bm T^\pm\mid\bm U^\pm)$ are nonnegative. Because $-a^->0$ and $a^+>0$, we have $d_r^\pm\ge0$. If either dissipation vanishes, say $d_r^-=0$, then both nonnegative terms in the bracket must vanish: $\mathscr E_r^{\rm HLL}=\eta_r(\bm T^-)$ and $\Hrel_r(\bm T^-\mid\bm U^-)=0$. By the strict convexity of $\eta_r$, this forces $\bm T^-=\bm U^-$, and the numerical flux reduces to the physical flux. Consequently, the interface conditions of the convex-decomposition criterion strictly imply the dissipation dichotomy \eqref{eq:md-side-condition}, so wave speeds satisfying the hypotheses of Theorem~\ref{thm:md-weak-convex} also satisfy the interface conditions of Theorem~\ref{thm:md-weak-relative}. Although both criteria target the same local entropy budget $\mathcal B_{K,r}^n$, this interface implication does not establish an ordering between their time-step bounds $\tau_*^{\rm cvx}$ and $\tau_*^{\rm rel}$.

\begin{remark}[The role of discrete divergence control]
	\label{rem:md-divergence-control}
	The LDF property plays two essential roles in our multidimensional analysis. First, it fulfills the discrete divergence hypothesis required for interior state positivity in Proposition~\ref{prop:md-pp}. Second, via the discrete Gauss formula \eqref{eq:md-discrete-gauss}, it enables the exact algebraic cancellation in \eqref{eq:md-magnetic-cancellation}. If the discrete normal magnetic constraint were violated, leaving a nonzero local divergence residual $\delta_K^B:=\sum_\nu c_{K,\nu} B_{\bm n_{K,\nu}}(\bm U_{K,\nu})\ne0$, the cell entropy defect identity \eqref{eq:md-defect-identity} would acquire the spurious defect term
	\begin{equation*}
		-\lambda_K\phi_r(\bm W)\delta_K^B,
	\end{equation*}
	which is generically sign-indefinite because the sign of $\phi_r(\bm W) = (\gamma-1)\rho f_r'(s)(\bm v\cdot\bm B)/p$ depends on the local flow alignment. Such a sign-indefinite term precludes provable cell-average multi-entropy inequalities. Thus, the LDF property combined with the Godunov--Powell interface discretization exactly eliminates this divergence error at the fully discrete level.
\end{remark}

The corresponding global entropy stability results under either criterion follow directly from Remark~\ref{rem:md-global-entropy}.

\section{Weak-to-strong lifting}
\label{sec:lifting}

Sections~\ref{sec:1d} and~\ref{sec:md} established weak entropy stability for cell averages. To achieve strong multi-entropy stability (Definitions~\ref{def:weak-strong-entropy} and~\ref{def:md-weak-strong-entropy}), intra-element polynomial variations must also be controlled. Extending the framework of \cite{WuEPO2026} to compressible MHD, we construct a weak-to-strong (W2S) lifting operator: we first enforce physical admissibility ($\rho>0$, $p>0$) via positivity-preserving scaling, and then contract the polynomial variation about its cell average to satisfy all prescribed discrete entropy inequalities simultaneously. Both operations strictly preserve the cell average and the magnetic solenoidal involution.

\subsection{The cell average and its entropy bounds}
\label{subsec:lifting-data}

Let $\Th:=\{I_j\}_{j=1}^{N_x}$ for $d=1$, and let $\Th$ be the conforming polytopal mesh of Section~\ref{sec:md} for $d\in\{2,3\}$. On each cell $K\in\Th$, denote the quadrature nodes and weights of \eqref{eq:1d-cell-average} and \eqref{eq:md-cell-average} by $\{\bm x_{K,a}\}$ and $\{\omega_{K,a}\}$. The weights are strictly positive, satisfy $\sum_a\omega_{K,a}=1$, and are exact for polynomials in $\mathbb P^k(K)$, so that
\begin{equation}\label{eq:lift-quadrature}
	\overline{\bm U}_K
	=\sum_a\omega_{K,a}\bm U_K(\bm x_{K,a}),
	\qquad \forall\,\bm U_K\in[\mathbb P^k(K)]^8.
\end{equation}
When all quadrature states belong to $\G$, the discrete cell entropy is
\begin{equation}\label{eq:lift-quadrature-entropy}
	\mathcal S_{K,r}(\bm U_K)
	:=\sum_a\omega_{K,a}\eta_r(\bm U_K(\bm x_{K,a})),
	\qquad 1\le r\le M,
\end{equation}
unifying \eqref{eq:1d-quadrature-entropy} and \eqref{eq:md-quadrature-entropy}.

Let $\bm U_K^\star\in[\mathbb P^k(K)]^8$ be a provisional polynomial (from an unconstrained DG update or finite-volume reconstruction) with cell average $\bm W_K:=\overline{\bm U}_K^\star$. Assume $\bm U_K^\star$ respects the magnetic constraint ($B_{1,K}^\star\equiv B_{\rm const}$ for $d=1$ and $(B_{1,K}^\star,\ldots,B_{d,K}^\star)^\top\in\mathbb V_{\rm div}^k(K)$ for $d\in\{2,3\}$), although its point values need not belong to $\G$. The time discretization provides local entropy bounds $\mathcal B_{K,r}$ satisfying weak multi-entropy stability:
\begin{equation}\label{eq:lift-feasibility}
	\overline{\bm U}_K^\star=\bm W_K\in\G,
	\qquad
	\eta_r(\bm W_K)\le\mathcal B_{K,r},
	\quad 1\le r\le M.
\end{equation}
For forward Euler steps, $\bm W_K=\overline{\bm U}_K^{n+1}$ and $\mathcal B_{K,r}=\mathcal B_{K,r}^n$ are given by \eqref{eq:1d-weak-bound} or \eqref{eq:md-weak-bound}; Section~\ref{sec:high-order-time} derives the bounds for SSP multistep methods.

Let $\mathcal X_K\subset K$ be a finite set containing all entropy quadrature nodes $\{\bm x_{K,a}\}$ and any additional points needed to evaluate spatial residuals at the next time level. The lifting guarantees admissibility on $\mathcal X_K$, which suffices to advance the scheme. Exterior boundary traces are prescribed in $\G$.

\begin{proposition}[Weak stability as feasibility of strong entropy bounds]
	\label{prop:lift-entropy-feasibility}
	Let $\bm W_K\in\G$ (with $B_1(\bm W_K)=B_{\rm const}$ for $d=1$), and let $\mathcal B_{K,r}$ ($1\le r\le M$) be prescribed entropy bounds. Within the polynomial space respecting the magnetic constraint, there exists a polynomial $\bm U_K\in[\mathbb P^k(K)]^8$ with $\overline{\bm U}_K=\bm W_K$ that is admissible on $\mathcal X_K$ and satisfies $\mathcal S_{K,r}(\bm U_K)\le\mathcal B_{K,r}$ for all $1\le r\le M$ if and only if
	\begin{equation}\label{eq:lift-feasibility-equivalence}
		\eta_r(\bm W_K)\le\mathcal B_{K,r},
		\qquad 1\le r\le M.
	\end{equation}
	More precisely, for any polynomial $\bm U_K$ with cell average $\bm W_K$ admissible on $\mathcal X_K$, the discrete Jensen gap
	\begin{equation}\label{eq:lift-gap-and-margin}
		\mathcal J_{K,r}(\bm U_K)
		:=\mathcal S_{K,r}(\bm U_K)-\eta_r(\bm W_K)\ge0
	\end{equation}
	satisfies the strong bounds if and only if
	\begin{equation}\label{eq:lift-gap-constraint}
		\mathcal J_{K,r}(\bm U_K)
		\le\mathcal B_{K,r}-\eta_r(\bm W_K),
		\qquad 1\le r\le M.
	\end{equation}
\end{proposition}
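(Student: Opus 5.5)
The plan is to treat the two directions of the equivalence separately and then note that the refined gap statement is just a rearrangement. The whole argument rests on the quadrature exactness \eqref{eq:lift-quadrature} and the convexity of each $\eta_r$ on $\G$ (Lemma~\ref{lem:harten-convexity}).

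First, take any polynomial $\bm U_K$ with $\overline{\bm U}_K=\bm W_K$ whose values at the quadrature nodes lie in $\G$; this holds whenever $\bm U_K$ is admissible on $\mathcal X_K$, since $\mathcal X_K$ contains every $\bm x_{K,a}$. By \eqref{eq:lift-quadrature}, $\bm W_K=\sum_a\omega_{K,a}\bm U_K(\bm x_{K,a})$ is a convex combination with positive weights summing to one. Jensen's inequality for the convex function $\eta_r$ on the convex set $\G$ then gives $\eta_r(\bm W_K)\le\mathcal S_{K,r}(\bm U_K)$, which is exactly $\mathcal J_{K,r}(\bm U_K)\ge0$. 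Subtracting $\eta_r(\bm W_K)$ from both sides of $\mathcal S_{K,r}(\bm U_K)\le\mathcal B_{K,r}$ shows that this inequality is equivalent to \eqref{eq:lift-gap-constraint}. That settles the ``more precisely'' part.

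Necessity follows at once. If such a $\bm U_K$ exists, then $\eta_r(\bm W_K)\le\mathcal S_{K,r}(\bm U_K)\le\mathcal B_{K,r}$ for every $r$, which is \eqref{eq:lift-feasibility-equivalence}.

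For sufficiency I will use the constant polynomial $\bm U_K\equiv\bm W_K$. Its cell average is $\bm W_K$ and it is admissible at every point of $\mathcal X_K$, because $\bm W_K\in\G$. The quadrature entropy is $\mathcal S_{K,r}(\bm U_K)=\sum_a\omega_{K,a}\eta_r(\bm W_K)=\eta_r(\bm W_K)\le\mathcal B_{K,r}$.

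The only point needing care is that the constant function lies in the magnetic-constrained space. For $d=1$, its $B_1$ component equals $B_1(\bm W_K)=B_{\rm const}$ by hypothesis. For $d\in\{2,3\}$, a constant magnetic field has zero divergence and so belongs to $\mathbb V_{\rm div}^k(K)$ for every $k\ge0$. No step here is a genuine obstacle; the main thing to check is that admissibility on $\mathcal X_K$ covers all the entropy quadrature nodes, so that $\mathcal S_{K,r}$ is well defined and Jensen applies.
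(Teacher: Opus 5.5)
Your proposal is correct and follows essentially the same route as the paper: Jensen's inequality combined with the quadrature exactness \eqref{eq:lift-quadrature} gives necessity and $\mathcal J_{K,r}\ge0$, the constant polynomial $\bm U_K\equiv\bm W_K$ gives sufficiency, and subtracting $\eta_r(\bm W_K)$ gives the gap reformulation. The paper additionally records the identity $\mathcal J_{K,r}(\bm U_K)=\sum_a\omega_{K,a}\Hrel_r(\bm U_K(\bm x_{K,a})\mid\bm W_K)$, which the statement does not require, and it asserts without comment what you check explicitly, namely that the constant state satisfies the magnetic constraint.
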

\begin{proof}
	Necessity follows from Jensen's inequality and \eqref{eq:lift-quadrature}:
	\begin{equation*}
		\eta_r(\bm W_K) = \eta_r\left(\sum_a\omega_{K,a}\bm U_K(\bm x_{K,a})\right) \le \sum_a\omega_{K,a}\eta_r(\bm U_K(\bm x_{K,a})) = \mathcal S_{K,r}(\bm U_K) \le \mathcal B_{K,r}.
	\end{equation*}
	Sufficiency is demonstrated by the constant polynomial $\bm U_K\equiv\bm W_K$, which satisfies the magnetic constraint, belongs to $\G$ on $\mathcal X_K$, and attains $\mathcal S_{K,r}(\bm W_K)=\eta_r(\bm W_K)\le\mathcal B_{K,r}$. Subtracting $\eta_r(\bm W_K)$ yields \eqref{eq:lift-gap-constraint}. Expanding relative entropy \eqref{eq:1d-weak-relative-quantities} and applying \eqref{eq:lift-quadrature} gives
	\begin{equation}\label{eq:lift-jensen-relative-entropy}
		\mathcal J_{K,r}(\bm U_K)
		=\sum_a\omega_{K,a}
		\Hrel_r(\bm U_K(\bm x_{K,a})\mid\bm W_K),
	\end{equation}
	completing the proof.
\end{proof}

Identity \eqref{eq:lift-jensen-relative-entropy} shows that $\mathcal J_{K,r}(\bm U_K)$ quantifies the entropy fluctuation induced by polynomial variations within $K$. By strict convexity of $\eta_r$, this gap vanishes if and only if all quadrature values coincide with $\bm W_K$. On compact convex subsets of $\G$, Lemma~\ref{lem:harten-convexity} ensures that $\mathcal J_{K,r}(\bm U_K)$ is equivalent, up to constants, to the weighted $L^2$-deviation $\sum_a\omega_{K,a}\norm{\bm U_K(\bm x_{K,a})-\bm W_K}^2$.

\subsection{Simultaneous entropy stability lifting after positivity limiting}
\label{subsec:lifting-construction}

For positivity thresholds $0<\epsilon_{\rho,K}<\rho(\bm W_K)$ and $0<\epsilon_{p,K}<p(\bm W_K)$, define the truncated admissible set
\begin{equation}\label{eq:lift-positive-set}
	\G_{K,\epsilon}
	:=\{\bm U\in\G:
	\rho(\bm U)\ge\epsilon_{\rho,K},\quad
	p(\bm U)\ge\epsilon_{p,K}\}.
\end{equation}
Since pressure $p(\bm U)$ is concave on $\{\bm U\in\G:\rho>0\}$, $\G_{K,\epsilon}$ is closed and convex, with $\bm W_K$ in its interior. Along the contraction ray
\begin{equation}\label{eq:lift-ray}
	\bm U_K(\bm x;\theta)
	:=\bm W_K+\theta\bigl(\bm U_K^\star(\bm x)-\bm W_K\bigr),
	\qquad \theta\in[0,1],
\end{equation}
we first apply the positivity-preserving (PP) limiter \cite{ZhangShu2010,WuShu2019}:
\begin{equation}\label{eq:lift-pp}
	\theta_K^{\rm PP}
	:=\max\{\theta\in[0,1]:
	\bm U_K(\bm x;\theta)\in\G_{K,\epsilon}
	\text{ for every }\bm x\in\mathcal X_K\},
	\qquad
	\bm U_K^{\rm PP}(\bm x):=\bm U_K(\bm x;\theta_K^{\rm PP}).
\end{equation}
Since $\mathcal X_K$ is finite and $\bm W_K$ satisfies both thresholds strictly, $\theta_K^{\rm PP}>0$. Admissibility is essential here because $\eta_r$ and its derivatives are defined only on $\G$.

For each entropy $\eta_r$ ($1\le r\le M$), define the one-dimensional entropy profile
\begin{equation}\label{eq:lift-profile}
	\Phi_{K,r}(\vartheta)
	:=\mathcal S_{K,r}\bigl(
	\bm W_K+\vartheta(\bm U_K^{\rm PP}-\bm W_K)\bigr),
	\qquad 0\le\vartheta\le1.
\end{equation}
Strict convexity of $\eta_r$ ensures that $\Phi_{K,r}$ is continuous and convex on $[0,1]$. Since $\overline{\bm U}_K^{\rm PP}=\bm W_K$, quadrature exactness \eqref{eq:lift-quadrature} yields
\begin{equation}\label{eq:lift-profile-origin}
	\Phi_{K,r}(0)=\eta_r(\bm W_K),
	\qquad
	\Phi_{K,r}'(0)
	=\bm V_r(\bm W_K)^\top
	\sum_a\omega_{K,a}
	\bigl(\bm U_K^{\rm PP}(\bm x_{K,a})-\bm W_K\bigr)=0.
\end{equation}
Hence $\Phi_{K,r}(\vartheta)$ is nondecreasing on $[0,1]$, attaining its minimum at $\vartheta=0$.

The entropy factor $\vartheta_{K,r}\in[0,1]$ is defined by
\begin{equation}\label{eq:lift-entropy-factor}
	\begin{cases}
		\vartheta_{K,r}=1,
		&\Phi_{K,r}(1)\le\mathcal B_{K,r},\\[2mm]
		\Phi_{K,r}(\vartheta_{K,r})=\mathcal B_{K,r},\quad
		\vartheta_{K,r}\in[0,1),
		&\Phi_{K,r}(1)>\mathcal B_{K,r}.
	\end{cases}
\end{equation}
In the second case, strict convexity of $\eta_r$ and $\Phi'_{K,r}(0)=0$ imply that $\Phi_{K,r}$ is strictly increasing; weak feasibility \eqref{eq:lift-feasibility} guarantees a unique root in $[0,1)$, computed via bisection or a safeguarded Newton iteration.
To satisfy all $M$ entropy inequalities simultaneously, we define
\begin{equation}\label{eq:lift-final}
	\vartheta_K:=\min_{1\le r\le M}\vartheta_{K,r},
	\qquad \theta_K:=\theta_K^{\rm PP}\vartheta_K,
	\qquad
	\bm U_K^{\rm new}
	:=\bm W_K+\theta_K(\bm U_K^\star-\bm W_K).
\end{equation}

\begin{theorem}[Weak-to-strong multi-entropy lifting]
	\label{thm:weak-strong-lifting}
	Under the preceding hypotheses, $\bm U_K^{\rm new}$ in \eqref{eq:lift-final} preserves the cell average $\overline{\bm U}_K^{\rm new}=\bm W_K$, belongs to $\G_{K,\epsilon}$ on $\mathcal X_K$, and satisfies the strong multi-entropy bounds
	\begin{equation}\label{eq:lift-strong}
		\mathcal S_{K,r}(\bm U_K^{\rm new})\le\mathcal B_{K,r},
		\qquad 1\le r\le M.
	\end{equation}
	Furthermore, it preserves the solenoidal constraint ($B_{1,K}^{\rm new}\equiv B_{\rm const}$ for $d=1$ and $(B_{1,K}^{\rm new},\allowbreak\ldots,B_{d,K}^{\rm new})^\top\in\mathbb V_{\rm div}^k(K)$ for $d\in\{2,3\}$). These conclusions remain valid if $\theta_K$ is replaced by any smaller nonnegative scalar.
\end{theorem}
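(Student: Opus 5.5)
The argument rests on the observation that every object in \eqref{eq:lift-final} lies on a single affine ray through $\bm W_K$. Writing $\bm U_K(\cdot;\theta)$ as in \eqref{eq:lift-ray}, we have $\bm U_K^{\rm new}=\bm U_K(\cdot;\theta_K)$ and $\bm W_K+\vartheta(\bm U_K^{\rm PP}-\bm W_K)=\bm U_K(\cdot;\theta_K^{\rm PP}\vartheta)$. We will prove all assertions for an arbitrary $\theta\in[0,\theta_K]$, which also covers the final sentence of the theorem.

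\emph{Cell average and magnetic constraint.} Both are linear properties. By quadrature exactness \eqref{eq:lift-quadrature} and $\overline{\bm U}_K^\star=\bm W_K$, we get $\overline{\bm U_K(\cdot;\theta)}=\bm W_K+\theta(\bm W_K-\bm W_K)=\bm W_K$. For $d=1$, $B_1$ is an affine combination of the constant $B_{\rm const}$ (since $B_1(\bm W_K)=B_{\rm const}$) and $B_{1,K}^\star\equiv B_{\rm const}$, so it equals $B_{\rm const}$. For $d\in\{2,3\}$, $\mathbb V_{\rm div}^k(K)$ is a linear space containing constants. Hence $\bm B(\bm W_K)+\theta(\bm B_K^\star-\bm B(\bm W_K))$ lies in it.

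\emph{Admissibility.} Since $0\le\theta\le\theta_K\le\theta_K^{\rm PP}$, at each $\bm x\in\mathcal X_K$ the state $\bm U_K(\bm x;\theta)$ is a convex combination of $\bm W_K$ and $\bm U_K(\bm x;\theta_K^{\rm PP})$. Both endpoints lie in the closed convex set $\G_{K,\epsilon}$: the first by the choice of thresholds, the second by the definition \eqref{eq:lift-pp}, where the maximum is attained because $\G_{K,\epsilon}$ is closed and $\mathcal X_K$ is finite. Convexity of $\G_{K,\epsilon}$, which follows from concavity of $p$, then gives the claim. In particular, all entropy quadrature states are admissible, so $\mathcal S_{K,r}$ is well defined.

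\emph{Entropy bounds.} Set $\vartheta=\theta/\theta_K^{\rm PP}\in[0,\vartheta_K]$ (if $\theta_K^{\rm PP}>0$, which holds). Then $\mathcal S_{K,r}(\bm U_K(\cdot;\theta))=\Phi_{K,r}(\vartheta)$. Using \eqref{eq:lift-profile-origin} and convexity, $\Phi_{K,r}$ is nondecreasing on $[0,1]$, so $\Phi_{K,r}(\vartheta)\le\Phi_{K,r}(\vartheta_{K,r})$ because $\vartheta\le\vartheta_K\le\vartheta_{K,r}$. By \eqref{eq:lift-entropy-factor}, $\Phi_{K,r}(\vartheta_{K,r})\le\mathcal B_{K,r}$ in both cases.

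The step needing the most care is the well-definedness of $\vartheta_{K,r}$ in the second case. From feasibility \eqref{eq:lift-feasibility} we have $\Phi_{K,r}(0)=\eta_r(\bm W_K)\le\mathcal B_{K,r}<\Phi_{K,r}(1)$, and continuity plus the intermediate value theorem yield a root in $[0,1)$. Monotonicity alone already guarantees the needed inequality at the chosen root, so uniqueness is not required for the bound. Convexity of $\Phi_{K,r}$ follows from convexity of $\eta_r$ on $\G$ along the segment, which lies in $\G$ by the admissibility step. Monotonicity follows from $\Phi_{K,r}'(0)=0$, since a convex function with zero derivative at its left endpoint is nondecreasing. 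Combining the three parts proves the theorem for every $\theta\in[0,\theta_K]$, in particular for $\theta_K$ itself.
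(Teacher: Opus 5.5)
Your proof is correct and follows essentially the same route as the paper: linearity of the mean and of the magnetic constraint along the contraction ray, convexity of $\G_{K,\epsilon}$ for admissibility, and monotonicity of the convex profile $\Phi_{K,r}$ (from $\Phi_{K,r}'(0)=0$), which gives $\Phi_{K,r}(\vartheta)\le\Phi_{K,r}(\vartheta_{K,r})\le\mathcal B_{K,r}$. Your small additions are welcome details the paper leaves implicit: you treat every $\theta\in[0,\theta_K]$ at once, verify that the maximum in \eqref{eq:lift-pp} is attained, and obtain the root in \eqref{eq:lift-entropy-factor} from the intermediate value theorem without needing uniqueness.
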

\begin{proof}
	Linearity of the mean ensures $\overline{\bm U}_K^{\rm new}=\bm W_K$. Since $\bm U_K^{\rm PP}(\bm x),\bm W_K\in\G_{K,\epsilon}$, convexity of $\G_{K,\epsilon}$ ensures $\bm U_K^{\rm new}(\bm x)\in\G_{K,\epsilon}$ for all $\bm x\in\mathcal X_K$. For each $1\le r\le M$, monotonicity of $\Phi_{K,r}$ and \eqref{eq:lift-entropy-factor} yield
	\begin{equation*}
		\mathcal S_{K,r}(\bm U_K^{\rm new})
		=\Phi_{K,r}(\vartheta_K)
		\le\Phi_{K,r}(\vartheta_{K,r})
		\le\mathcal B_{K,r}.
	\end{equation*}
	Because the scalar factor $\theta_K$ multiplies all spatial components of the polynomial variation equally,
	\begin{equation*}
		\nabla\cdot\bm B_K^{\rm new}
		=\nabla\cdot\bigl(\bm B(\bm W_K)+\theta_K(\bm B_K^\star-\bm B(\bm W_K))\bigr)
		=\theta_K\nabla\cdot\bm B_K^\star=0
	\end{equation*}
	for $d\in\{2,3\}$, and $B_{1,K}^{\rm new}\equiv B_{\rm const}$ for $d=1$. Finally, replacing $\theta_K$ by any $0\le\theta\le\theta_K$ corresponds to $\vartheta\le\vartheta_K$, preserving admissibility by convexity and $\mathcal S_{K,r}\le\mathcal B_{K,r}$ by monotonicity.
\end{proof}

\begin{remark}[Maximal entropy factors and compatibility with limiters]
	\label{rem:lift-exact-radius}
	By monotonicity, $\vartheta_{K,r}$ defined in \eqref{eq:lift-entropy-factor} is the maximal admissible contraction parameter:
	\begin{equation}\label{eq:lift-exact-radius}
		\vartheta_{K,r}
		=\max\{\vartheta\in[0,1]:
		\Phi_{K,r}(\vartheta)\le\mathcal B_{K,r}\}.
	\end{equation}
	Convexity of $\Phi_{K,r}$ implies that the secant ratio $[\mathcal B_{K,r}-\eta_r(\bm W_K)]/[\Phi_{K,r}(1)-\eta_r(\bm W_K)]$ provides a lower bound to $\vartheta_{K,r}$, but using it contracts more than necessary and can induce order reduction near smooth extrema; we therefore use the root defined in \eqref{eq:lift-entropy-factor}. By Theorem~\ref{thm:weak-strong-lifting}, any additional limiter contracting further along the same ray with $0\le\theta\le\theta_K$ preserves both admissibility and strong multi-entropy stability.
\end{remark}

\section{Multi-entropy stability under high-order time discretization}
\label{sec:high-order-time}

The forward Euler weak and strong multi-entropy stability results extend to high-order strong-stability-preserving (SSP) multistep methods via their canonical decomposition into forward Euler blocks \cite{GottliebShuTadmor2001,HadjimichaelKetchesonLocziNemeth2016}. Unlike SSP Runge--Kutta schemes, which require substage limiting and can suffer order reduction, SSP multistep methods evaluate spatial residuals on the solution history and invoke the W2S lifting operator only once per time step, at $t^{n+1}$. The convex decomposition yields a local multi-entropy budget that guarantees fully discrete strong multi-entropy stability.

Let $\bm U_h=(\bm U_K)_{K\in\Th}$ denote a global solution with $\bm U_K\in[\mathbb P^k(K)]^8$ satisfying $\bm U_K(\bm x)\in\G$ for all $\bm x\in\mathcal X_K$ and the solenoidal constraint $B_{1,K}\equiv B_{\rm const}$ ($d=1$) or $(B_{1,K},\ldots,B_{d,K})^\top\in\mathbb V_{\rm div}^k(K)$ ($d\in\{2,3\}$). Prescribed exterior boundary traces also belong to $\G$. Let $\mathcal L_h$ denote the spatial residual operator of Sections~\ref{sec:1d} and~\ref{sec:md}, whose cell averages satisfy \eqref{eq:1d-mean-update} or \eqref{eq:md-mean-update}. The operator $\mathcal L_h$ respects the magnetic involution: its $B_1$ component vanishes in one dimension, and its magnetic components lie in $\mathbb V_{\rm div}^k(K)$ in multiple dimensions. This preservation holds by construction for LDF DG schemes, and via divergence-free reconstruction for high-order finite-volume methods.

Define the forward Euler sub-step operator $\bm Y_h$ and its local entropy bound by
\begin{equation}\label{eq:time-fe-block}
	\bm Y_h(\bm U_h;\tau):=\bm U_h+\tau\mathcal L_h(\bm U_h),
\end{equation}
\begin{equation}\label{eq:time-fe-bound}
	\mathcal B_{K,r}^{\rm FE}(\bm U_h;\tau)
	:=\mathcal S_{K,r}(\bm U_K)
	-\frac{\tau}{|K|}\mathcal F_{K,r}(\bm U_h),
\end{equation}
where the outward numerical entropy flux is
\begin{equation}\label{eq:time-entropy-flux-functional}
	\mathcal F_{K,r}(\bm U_h):=
	\begin{cases}
		\widehat q_{j+1/2,r}-\widehat q_{j-1/2,r},&d=1,\quad K=I_j,\\
		\displaystyle\sum_{\nu\in\mathcal I_K^\partial}
		c_{K,\nu}\widehat q_{K,\nu,r},&d\in\{2,3\}.
	\end{cases}
\end{equation}
For any admissible $\bm U_h$, Sections~\ref{sec:1d} and~\ref{sec:md} determine wave speeds and a time-step bound $\tau_{\rm FE}(\bm U_h)>0$ such that
\begin{equation}\label{eq:time-fe-property}
	\overline{\bm Y}_K(\bm U_h;\tau)\in\G,
	\qquad
	\eta_r(\overline{\bm Y}_K(\bm U_h;\tau))
	\le\mathcal B_{K,r}^{\rm FE}(\bm U_h;\tau),
\end{equation}
for all $0\le\tau\le\tau_{\rm FE}(\bm U_h)$ and $1\le r\le M$. For $\tau=0$, \eqref{eq:time-fe-property} reduces to $\eta_r(\overline{\bm U}_K)\le\mathcal S_{K,r}(\bm U_K)$ via Jensen's inequality.

Let $\Delta t_n=t^{n+1}-t^n>0$, and consider an explicit $m$-step SSP multistep method with nonnegative coefficients satisfying \cite{GottliebShuTadmor2001}
\begin{equation}\label{eq:time-ssp-coefficients}
	c_{\ell,n},d_{\ell,n}\ge0,\qquad
	\sum_{\ell=0}^{m-1}c_{\ell,n}=1,\qquad
	c_{\ell,n}=0\ \Longrightarrow\ d_{\ell,n}=0.
\end{equation}
Omitting terms with $c_{\ell,n}=0$, the provisional multistep update is a convex combination of forward Euler steps:
\begin{equation}\label{eq:time-ssp-multistep}
	\bm U_h^{n+1,\star}
	=\sum_\ell c_{\ell,n}
	\bm Y_h(\bm U_h^{n-\ell};\tau_{\ell,n}),
	\qquad
	\tau_{\ell,n}:=\frac{d_{\ell,n}}{c_{\ell,n}}\Delta t_n.
\end{equation}
To ensure stability of each constituent block, we impose the step restriction
\begin{equation}\label{eq:time-ssp-step}
	\Delta t_n\le
	\min_{\ell:d_{\ell,n}>0}
	\frac{c_{\ell,n}}{d_{\ell,n}}
	\tau_{\rm FE}(\bm U_h^{n-\ell}).
\end{equation}
Combining the forward Euler bounds \eqref{eq:time-fe-bound} with weights $c_{\ell,n}$ defines the multistep entropy budget:
\begin{equation}\label{eq:time-multistep-bound}
	\begin{aligned}
		\mathcal B_{K,r}^{n,\rm MS}
		&:=\sum_\ell c_{\ell,n}
		\mathcal B_{K,r}^{\rm FE}(\bm U_h^{n-\ell};\tau_{\ell,n})
		=\sum_\ell c_{\ell,n}\mathcal S_{K,r}(\bm U_K^{n-\ell})
		-\frac{\Delta t_n}{|K|}\sum_\ell d_{\ell,n}
		\mathcal F_{K,r}(\bm U_h^{n-\ell}).
	\end{aligned}
\end{equation}
The W2S lifting of Section~\ref{sec:lifting} is then applied to $\bm U_K^{n+1,\star}$ with $\bm W_K=\overline{\bm U}_K^{n+1,\star}$ and $\mathcal B_{K,r}=\mathcal B_{K,r}^{n,\rm MS}$.

\begin{theorem}[Multi-entropy stability of the lifted SSP update]
	\label{thm:time-ssp-multistep}
	Suppose each previous state $\bm U_h^{n-\ell}$ with $c_{\ell,n}>0$ is nodal-admissible on $\mathcal X_K$ and solenoidal, with exterior boundary traces in $\G$. Under \eqref{eq:time-ssp-coefficients} and \eqref{eq:time-ssp-step}, the updated cell average satisfies
	\begin{equation}\label{eq:time-ms-weak}
		\overline{\bm U}_K^{n+1,\star}\in\G,
		\qquad
		\eta_r(\overline{\bm U}_K^{n+1,\star})
		\le\mathcal B_{K,r}^{n,\rm MS},\qquad 1\le r\le M,
	\end{equation}
	for every $K\in\Th$. Furthermore, the lifted polynomial $\bm U_K^{n+1}$ preserves this cell average ($\overline{\bm U}_K^{n+1}=\overline{\bm U}_K^{n+1,\star}$), satisfies the solenoidal constraint ($B_{1,K}^{n+1}\equiv B_{\rm const}$ or $(B_{1,K}^{n+1},\ldots,B_{d,K}^{n+1})^\top\in\mathbb V_{\rm div}^k(K)$), belongs to $\G$ on $\mathcal X_K$, and obeys the strong multi-entropy bounds
	\begin{equation}\label{eq:time-ms-strong}
		\mathcal S_{K,r}(\bm U_K^{n+1})
		\le\mathcal B_{K,r}^{n,\rm MS},\qquad 1\le r\le M.
	\end{equation}
\end{theorem}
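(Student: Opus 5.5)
The plan has two parts: first the weak inequality \eqref{eq:time-ms-weak}, obtained by convexity over the forward Euler blocks, and then the strong inequality \eqref{eq:time-ms-strong}, obtained from Theorem~\ref{thm:weak-strong-lifting}.

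\emph{Weak inequality.} Averaging is linear, so \eqref{eq:time-ssp-multistep} gives
\[
\overline{\bm U}_K^{n+1,\star}=\sum_\ell c_{\ell,n}\,\overline{\bm Y}_K(\bm U_h^{n-\ell};\tau_{\ell,n}),
\]
with nonnegative weights summing to one by \eqref{eq:time-ssp-coefficients}; terms with $c_{\ell,n}=0$ are omitted. For each retained $\ell$, the step restriction \eqref{eq:time-ssp-step} gives
\[
\tau_{\ell,n}=\frac{d_{\ell,n}}{c_{\ell,n}}\Delta t_n\le\tau_{\rm FE}(\bm U_h^{n-\ell})
\]
whenever $d_{\ell,n}>0$. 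If $d_{\ell,n}=0$, then $\tau_{\ell,n}=0$, and the block is just $\bm U_h^{n-\ell}$. Here the case $\tau=0$ of \eqref{eq:time-fe-property} applies: it is Jensen's inequality over the nodal states, which lie in $\G$ by nodal admissibility.

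In either case \eqref{eq:time-fe-property} holds for every block. Each $\overline{\bm Y}_K$ lies in $\G$, and $\eta_r(\overline{\bm Y}_K)\le\mathcal B^{\rm FE}_{K,r}(\bm U_h^{n-\ell};\tau_{\ell,n})$. Since $\G$ is convex, $\overline{\bm U}_K^{n+1,\star}\in\G$. Convexity of $\eta_r$ on $\G$ then gives
\[
\eta_r(\overline{\bm U}_K^{n+1,\star})\le\sum_\ell c_{\ell,n}\,\eta_r(\overline{\bm Y}_K)\le\sum_\ell c_{\ell,n}\,\mathcal B^{\rm FE}_{K,r}.
\]
The right-hand side equals $\mathcal B^{n,\rm MS}_{K,r}$. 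To check this, insert \eqref{eq:time-fe-bound} and use $c_{\ell,n}\tau_{\ell,n}=d_{\ell,n}\Delta t_n$, which yields the second expression in \eqref{eq:time-multistep-bound}. This proves \eqref{eq:time-ms-weak} simultaneously for all $r$. In one dimension we also need $\overline B_1=B_{\rm const}$; this holds because every block preserves it and the weights sum to one.

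\emph{Strong inequality.} Next we check the hypotheses of the lifting, in particular that the provisional polynomial respects the magnetic constraint. Each block $\bm Y_h(\bm U_h^{n-\ell};\tau)=\bm U_h^{n-\ell}+\tau\mathcal L_h(\bm U_h^{n-\ell})$ is solenoidal: the previous state is solenoidal, and $\mathcal L_h$ has vanishing $B_1$ component (for $d=1$) or magnetic components in $\mathbb V_{\rm div}^k(K)$ (for $d\ge2$). Both constraint sets are affine subspaces, so they are closed under convex combinations, and hence $\bm U_K^{n+1,\star}$ satisfies the constraint. Together with \eqref{eq:time-ms-weak}, this is exactly the feasibility hypothesis \eqref{eq:lift-feasibility} with $\bm W_K=\overline{\bm U}_K^{n+1,\star}$ and $\mathcal B_{K,r}=\mathcal B^{n,\rm MS}_{K,r}$.

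Theorem~\ref{thm:weak-strong-lifting} then yields all remaining conclusions: the lifted polynomial preserves the cell average, satisfies the solenoidal constraint, is admissible on $\mathcal X_K$ (indeed lies in $\G_{K,\epsilon}\subset\G$ there), and obeys the strong bounds \eqref{eq:time-ms-strong}.

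The main point needing care is the bookkeeping for degenerate indices. Blocks with $c_{\ell,n}=0$ must be dropped consistently from both the state combination and the budget; this is permitted by the implication $c_{\ell,n}=0\Rightarrow d_{\ell,n}=0$ in \eqref{eq:time-ssp-coefficients}. Blocks with $\tau_{\ell,n}=0$ must be covered by the Jensen case of \eqref{eq:time-fe-property}. Beyond that, the argument rests only on the linearity of the cell average and the budget in the block weights, and on joint convexity.
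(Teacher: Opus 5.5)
Your proposal is correct and follows the paper's proof essentially step for step. You write the cell average as a convex combination of forward Euler cell averages, and use \eqref{eq:time-ssp-step} with \eqref{eq:time-fe-property} to get admissibility and the block entropy bounds. Jensen's inequality with $c_{\ell,n}\tau_{\ell,n}=d_{\ell,n}\Delta t_n$ then recovers $\mathcal B_{K,r}^{n,\rm MS}$, and the affine magnetic constraint together with Theorem~\ref{thm:weak-strong-lifting} gives the strong bounds. Your extra bookkeeping for blocks with $d_{\ell,n}=0$ and for $\overline B_1=B_{\rm const}$ in one dimension just makes explicit what the paper leaves implicit.
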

\begin{proof}
	By linearity, $\overline{\bm U}_K^{n+1,\star} = \sum_\ell c_{\ell,n}\overline{\bm Y}_K(\bm U_h^{n-\ell};\tau_{\ell,n})$ is a convex combination of forward Euler cell averages. Under \eqref{eq:time-ssp-step}, each constituent step satisfies $\tau_{\ell,n}\le\tau_{\rm FE}(\bm U_h^{n-\ell})$, so \eqref{eq:time-fe-property} guarantees $\overline{\bm Y}_K(\bm U_h^{n-\ell};\tau_{\ell,n})\in\G$ and $\eta_r(\overline{\bm Y}_K)\le\mathcal B_{K,r}^{\rm FE}$. Convexity of $\G$ yields $\overline{\bm U}_K^{n+1,\star}\in\G$, while Jensen's inequality implies
	\begin{equation*}
		\eta_r(\overline{\bm U}_K^{n+1,\star})
		\le\sum_\ell c_{\ell,n}\eta_r\bigl(\overline{\bm Y}_K(\bm U_h^{n-\ell};\tau_{\ell,n})\bigr)
		\le\sum_\ell c_{\ell,n}\mathcal B_{K,r}^{\rm FE}(\bm U_h^{n-\ell};\tau_{\ell,n})
		=\mathcal B_{K,r}^{n,\rm MS},
	\end{equation*}
	proving \eqref{eq:time-ms-weak}. Because update \eqref{eq:time-ssp-multistep} is an affine combination of solenoidal states, $\bm U_h^{n+1,\star}$ satisfies the linear magnetic constraints. Theorem~\ref{thm:weak-strong-lifting} then directly yields nodal admissibility on $\mathcal X_K$, magnetic preservation, and strong multi-entropy stability \eqref{eq:time-ms-strong}.
\end{proof}

Define the total discrete entropy by
\begin{equation}\label{eq:time-global-entropy}
	\mathcal E_{h,r}(\bm U_h)
	:=\sum_{K\in\Th}|K|\mathcal S_{K,r}(\bm U_K).
\end{equation}
Multiplying \eqref{eq:time-ms-strong} by $|K|$ and summing over $K\in\Th$, interior numerical entropy fluxes cancel by conservation and anti-symmetry. Under periodic boundary conditions, boundary terms vanish, yielding the stability recursion
\begin{equation}\label{eq:time-ms-global}
	\mathcal E_{h,r}(\bm U_h^{n+1})
	\le\sum_\ell c_{\ell,n}\mathcal E_{h,r}(\bm U_h^{n-\ell})
	\le\max_{\ell:c_{\ell,n}>0}\mathcal E_{h,r}(\bm U_h^{n-\ell}).
\end{equation}
Inequality \eqref{eq:time-ms-global} establishes that the discrete entropy at $t^{n+1}$ is bounded by a convex combination of the entropies of retained history levels, precluding unbounded growth. As is standard for multistep schemes, the total entropy need not decrease monotonically between consecutive steps $(t^n, t^{n+1})$, but remains bounded by the startup values $\max_{0\le\ell\le m-1}\mathcal E_{h,r}(\bm U_h^\ell)$. For domains with physical boundaries, boundary fluxes accumulate telescopically in \eqref{eq:time-ms-global} via \eqref{eq:md-boundary-entropy-flux}.

\section{Numerical tests}
\label{sec:numer}

In this section, we validate the accuracy, multi-entropy stability, and physical admissibility of the proposed fully discrete schemes. Subsection~\ref{subsec:num-setup} summarizes the computational setup. Smooth test cases in Subsection~\ref{subsec:num-smooth} confirm the theoretical convergence rates, while challenging Riemann and two-dimensional problems in Subsections~\ref{subsec:num-riemann} and~\ref{subsec:num-2d} demonstrate robustness in near-vacuum, high-Mach, and strongly magnetized regimes.

\subsection{Numerical setup}\label{subsec:num-setup}

\subsubsection{Entropy family}\label{subsubsec:num-entropy}
We test the Harten entropy pairs \eqref{eq:intro-harten-entropy} generated by the parametric family
\begin{equation}\label{eq:entropy-parameterized-family}
	f_r(s):=
	\begin{cases}
		\displaystyle\frac{\exp\bigl(\tau_r(s-s_0)\bigr)-1}{\tau_r},
		&\tau_r\ne0,\\[2mm]
		s-s_0,&\tau_r=0,
	\end{cases}
	\qquad
	\tau_r:=\frac{\xi_r}{\gamma},\quad \xi_r<1,
	\quad r=1,\ldots,M,
\end{equation}
with the value at $\tau_r=0$ defined by continuous extension. Since
\begin{equation}\label{eq:entropy-parameterized-convexity}
	f_r'(s)=\exp\bigl(\tau_r(s-s_0)\bigr)>0,
	\qquad
	f_r'(s)-\gamma f_r''(s)=(1-\xi_r)\exp\bigl(\tau_r(s-s_0)\bigr)>0,
\end{equation}
these functions satisfy Harten's convexity conditions \eqref{eq:intro-harten-conditions}. For $\xi_r=0$, we recover the classical physical entropy $\eta_r=-\rho s$. Unless specified otherwise, we set $s_0=0$ and examine up to three simultaneous entropies with parameters
\begin{equation}\label{eq:num-entropy-parameters}
	\{\xi_r\}_{r=1}^M=
	\begin{cases}
		\{0\},&M=1,\\
		\{0,-0.5\},&M=2,\\
		\{0,0.9,-0.5\},&M=3.
	\end{cases}
\end{equation}
While our theoretical analysis applies to any finite family, this selection suffices to evaluate multi-entropy control.

\subsubsection{Spatial discretization and limiting}\label{subsubsec:num-spatial}
All simulations employ discontinuous Galerkin (DG) approximations of polynomial degree $k=2$ ($\mathbb P^2$). In multidimensional tests, we adopt Cartesian meshes with the locally divergence-free (LDF) magnetic spaces and quadrature rules from \cite{WuShu2019}, together with the Godunov--Powell interface discretization \eqref{eq:md-powell-operators}. The adiabatic index is set to $\gamma=5/3$ unless noted otherwise.

The W2S lifting limiter of Section~\ref{sec:lifting} is applied in all computations: positivity-preserving (PP) scaling is applied first, followed by a single cellwise scaling to satisfy the multi-entropy bounds. For discontinuous solutions, a standard WENO limiter precedes these scalings to suppress spurious oscillations, but is omitted in smooth accuracy tests.

We compare two variants differing solely in their interface wave speeds:
\begin{itemize}[leftmargin=2em]
	\item \textbf{Method 1}: uses the convex-decomposition wave speeds from Subsections~\ref{subsec:1d-weak-convex} and~\ref{subsec:md-weak-convex};
	\item \textbf{Method 2}: uses the relative-entropy-rate wave speeds from Subsections~\ref{subsec:1d-weak-relative} and~\ref{subsec:md-weak-relative}.
\end{itemize}
Because both methods produce visually indistinguishable profiles in most tests, solution plots for Method~2 are shown only when noticeable differences arise.

\subsubsection{Time discretization}\label{subsubsec:num-time}
To initialize the multistep scheme, the first three steps are advanced with the third-order SSP Runge--Kutta method \cite{RanochaSayyariDalcinParsaniKetcheson2020}. Subsequent integration uses the four-step, third-order variable-step formula SSPMSV43 \cite{HadjimichaelKetchesonLocziNemeth2016}. For $n\ge3$, let $H_n:=t^n-t^{n-3}$ and $\Omega_n:=H_n/\Delta t_n>2$. In \eqref{eq:time-ssp-multistep}, the only nonzero coefficients are
\begin{equation}\label{eq:time-ms43-coefficients}
	c_{0,n}=\frac{(\Omega_n+1)^2(\Omega_n-2)}{\Omega_n^3},
	\quad
	c_{3,n}=\frac{3\Omega_n+2}{\Omega_n^3},
	\quad
	d_{0,n}=\frac{(\Omega_n+1)^2}{\Omega_n^2},
	\quad
	d_{3,n}=\frac{\Omega_n+1}{\Omega_n^2},
\end{equation}
which satisfy the third-order accuracy and SSP conditions \eqref{eq:time-ssp-coefficients}. The corresponding effective forward Euler steps are
\begin{equation}\label{eq:time-ms43-effective}
	\tau_{0,n}=\frac{d_{0,n}}{c_{0,n}}\Delta t_n,
	\qquad
	\tau_{3,n}=\frac{d_{3,n}}{c_{3,n}}\Delta t_n.
\end{equation}
Writing $\mathcal S_{K,r}^n:=\mathcal S_{K,r}(\bm U_K^n)$, the multistep entropy bound \eqref{eq:time-multistep-bound} specializes to
\begin{equation}\label{eq:num-sspms-budget}
	\mathcal B_{K,r}^{n,\mathrm{MS}}
	=
	c_{0,n}\mathcal S_{K,r}^{n}
	+c_{3,n}\mathcal S_{K,r}^{n-3}
	-
	\frac{\Delta t_n}{|K|}
	\bigl[
	d_{0,n}\mathcal F_{K,r}(\bm U_h^{n})+d_{3,n}\mathcal F_{K,r}(\bm U_h^{n-3})
	\bigr],
\end{equation}
where $\mathcal F_{K,r}$ is the outward numerical entropy flux sum \eqref{eq:time-entropy-flux-functional} evaluated with the selected interface speeds.

\subsubsection{Step-size selection}\label{subsubsec:num-step}
At time level $t^n$, we define the directional wave speeds
\begin{equation}\label{eq:num-directional-cfl-speed}
	\Lambda_i^n
	:=
	\max_{K\in\Th}
	\bigl\{
	|v_i(\overline{\bm U}_K^n)|
	+\mathcal C(\overline{\bm U}_K^n;\bm e_i)
	\bigr\}
	+
	\max_{K\in\Th}
	\left\{
	|v_i(\overline{\bm U}_K^n)|
	+\frac{|\bm B(\overline{\bm U}_K^n)|}
	{\sqrt{\rho(\overline{\bm U}_K^n)}}
	\right\},
	\qquad i=1,\ldots,d,
\end{equation}
where $\mathcal C(\bm U;\bm n)$ is the auxiliary splitting speed from \cite{WuShu2019} (also used in \eqref{eq:md-multistate-coefficient}). The nominal time step is given by
\begin{equation}\label{eq:num-time-step}
	\Delta t_n^{\rm CFL}
	:=
	\begin{cases}
		\displaystyle
		\frac{\mathrm{CFL}\,\Delta x_1}{\Lambda_1^n},
		& d=1,\\[3mm]
		\displaystyle
		\frac{\mathrm{CFL}}
		{\Lambda_1^n/\Delta x_1+\Lambda_2^n/\Delta x_2},
		& d=2,
	\end{cases}
	\qquad
	\mathrm{CFL}=0.1.
\end{equation}
To guarantee physical admissibility and multi-entropy stability under SSPMSV43, the accepted step $\Delta t_n$ must satisfy the SSP restriction \eqref{eq:time-ssp-step}, which is equivalent to requiring
\begin{equation*}
	\tau_{\ell,n}\le\tau_{\rm FE}(\bm U_h^{n-\ell}),
	\qquad \ell\in\{0,3\},
\end{equation*}
for the effective forward Euler steps in \eqref{eq:time-ms43-effective}.

\subsubsection{Entropy and dissipation diagnostics}\label{subsubsec:num-diagnostics}

We use three diagnostics to compare the \cmmAdd{two }wave-speed constructions.
The index \mbox{$r=1,\ldots,M$} identifies the entropy pair generated by
\mbox{$f_r$} in \mbox{\eqref{eq:entropy-parameterized-family}};
the label \mbox{$\xi$} in each plot denotes the corresponding parameter
\mbox{$\xi_r$} in \mbox{\eqref{eq:num-entropy-parameters}}.

\begin{enumerate}[itemsep=0.6\baselineskip]
	
	\item \emph{Total entropy increment.}
	\begin{equation}\label{eq:num-global-entropy}
		\Delta\mathcal E_{h,r}^n
		:=
		\mathcal E_{h,r}^n-\mathcal E_{h,r}^0,\qquad
		\mathcal E_{h,r}^n
		:=\mathcal E_{h,r}(\bm U_h^n)
		=\sum_{K\in\Th}|K|\,\mathcal S_{K,r}^n.
	\end{equation}
	For periodic boundaries, \eqref{eq:time-ms-global} bounds the
	total entropy by its values at earlier time levels, without implying
	monotonicity between consecutive steps.
	By induction, this yields $\Delta\mathcal E_{h,r}^n\le0$
	for all $n\ge0$, provided the startup values satisfy
	$\mathcal E_{h,r}^j\le\mathcal E_{h,r}^0$ for $j=1,2,3$.
	For nonperiodic boundaries,
	boundary entropy transport may change the sign of the increment.
	
	\item \emph{Global entropy residual.}
	For nonperiodic problems, we report
	\begin{equation}\label{eq:num-entropy-residual}
		\mathcal R_{h,r}^{n+1}
		:=
		\sum_{K\in\Th}
		|K|
		\left[
		\mathcal S_{K,r}^{n+1}
		-
		\mathcal B_{K,r}^{n,\mathrm{MS}}
		\right].
	\end{equation}
	The entropy bound \eqref{eq:num-sspms-budget} includes boundary entropy
	fluxes, so the cellwise entropy inequalities imply
	$\mathcal R_{h,r}^{n+1}\le0$.
	
	\item \emph{HLL jump dissipation.}
	\begin{equation}\label{eq:num-hll-jump}
		\mathcal D_{\mathrm{HLL}}^n
		:=
		\max_{K,\nu}
		\frac{-a_{K,\nu}^{-,n}a_{K,\nu}^{+,n}}
		{a_{K,\nu}^{+,n}-a_{K,\nu}^{-,n}}
		\left\|
		\bm U_{K,\nu}^{\mathrm{ext},n}
		-
		\bm U_{K,\nu}^{n}
		\right\|_2 .
	\end{equation}
	The HLL flux \eqref{eq:md-hll-flux} contains a dissipative term proportional to
	the state jump. This diagnostic reports the maximum magnitude of that term over all face quadrature points.
	
\end{enumerate}

\subsection{Accuracy for smooth solutions}\label{subsec:num-smooth}

\begin{numexample}[One-dimensional low-density advection]
	\label{ex:smooth1d}
	We first verify the accuracy of the one-dimensional schemes equipped with the W2S lifting operator of Section~\ref{sec:lifting}. The low-density advection problem \cite{WuShu2019} is solved on the periodic domain $[0,2\pi]$ with $\gamma=1.4$. The smooth exact solution is
	\begin{equation}\label{eq:num-smooth1d}
		(\rho,\bm v^\top,p,\bm B^\top)(x_1,t)
		=
		\bigl(
		1+0.99\sin(x_1-t),1,0,0,1,0.1,0,0
		\bigr).
	\end{equation}
	Computations are performed on $N$ uniform cells with WENO limiting disabled.
	
	Table~\ref{tab:rho_errors} lists the $L^1$ and $L^2$ density errors and numerical orders at $T=0.1$. Both methods achieve the optimal third-order convergence rate, confirming that the W2S lifting procedure does not degrade the designed accuracy in smooth regions.
	
	\begin{table}[!htbp]
		\centering
		\caption{Example~\ref{ex:smooth1d}: $L^1$ and $L^2$ density errors and convergence orders at $T=0.1$, with $N$ cells.}
		\label{tab:rho_errors}
		\small
		\setlength{\tabcolsep}{3pt}
		\begin{tabular}{cccccccccc}
			\toprule
			\multirow{2}{*}{$N$} & \multicolumn{2}{c}{Method 1 ($L^1$)} & \multicolumn{2}{c}{Method 1 ($L^2$)} & & \multicolumn{2}{c}{Method 2 ($L^1$)} & \multicolumn{2}{c}{Method 2 ($L^2$)} \\
			\cmidrule(lr){2-3} \cmidrule(lr){4-5} \cmidrule(lr){7-8} \cmidrule(lr){9-10}
			& Error & Order & Error & Order & & Error & Order & Error & Order \\
			\midrule
			40  & $7.74 \times 10^{-5}$ & ---  & $4.95 \times 10^{-5}$ & ---  & & $8.22 \times 10^{-5}$ & ---  & $4.61 \times 10^{-5}$ & ---  \\
			80  & $8.29 \times 10^{-6}$ & 3.22 & $5.85 \times 10^{-6}$ & 3.08 & & $1.31 \times 10^{-5}$ & 2.64 & $6.95 \times 10^{-6}$ & 2.73 \\
			160 & $1.08 \times 10^{-6}$ & 2.94 & $6.57 \times 10^{-7}$ & 3.15 & & $1.97 \times 10^{-6}$ & 2.74 & $1.05 \times 10^{-6}$ & 2.73 \\
			320 & $1.54 \times 10^{-7}$ & 2.82 & $8.12 \times 10^{-8}$ & 3.02 & & $2.00 \times 10^{-7}$ & 3.30 & $1.16 \times 10^{-7}$ & 3.18 \\
			640 & $2.25 \times 10^{-8}$ & 2.77 & $1.22 \times 10^{-8}$ & 2.74 & & $2.39 \times 10^{-8}$ & 3.07 & $1.18 \times 10^{-8}$ & 3.29 \\
			\bottomrule
		\end{tabular}
	\end{table}
\end{numexample}

\begin{numexample}[Two-dimensional smooth vortex]
	\label{ex:smooth2d}
	We next simulate the low-pressure MHD vortex \cite{WuShu2018,WuShu2019} on the periodic domain $\Omega=[-10,10]^2$. The vortex strength parameter $\mu=5.389489439$ yields an extreme minimum pressure of $p\approx 5.3\times10^{-12}$ at the vortex core. The exact solution is a rigid translation of the initial vortex with velocity $(1,1)^\top$. With $R^2=x_1^2+x_2^2$ and $\gamma=5/3$, the initial state is
	\begin{equation}\label{eq:num-vortex-initial}
		\begin{aligned}
			\rho&=1,\qquad
			\bm v=(1+\delta v_1,1+\delta v_2,0)^\top,\qquad
			\bm B=(\delta B_1,\delta B_2,0)^\top,\\
			(\delta v_1,\delta v_2)
			&=\frac{\mu}{\sqrt{2}\pi}e^{(1-R^2)/2}(-x_2,x_1),\\
			(\delta B_1,\delta B_2)
			&=\frac{\mu}{2\pi}e^{(1-R^2)/2}(-x_2,x_1),\qquad
			p=1-\frac{\mu^2(1+R^2)}{8\pi^2}e^{1-R^2}.
		\end{aligned}
	\end{equation}
	
	Table~\ref{tab:num-vortex} displays the $L^1$ errors in $v_1, v_2, B_1$, and $B_2$ for Method~1 at $T=0.05$ across uniform $N\times N$ meshes. All four components achieve optimal third-order accuracy. Method~2 produces nearly identical errors and orders and is omitted for brevity. Together with Example~\ref{ex:smooth1d}, these results confirm that the W2S lifting preserves the design order in both low-density and low-pressure regimes.
	
	\begin{table}[!htbp]
		\caption{Example~\ref{ex:smooth2d}: $L^1$ errors and convergence orders for Method~1 at $T=0.05$ on uniform $N\times N$ meshes.}
		\label{tab:num-vortex}
		\centering
		\small
		\setlength{\tabcolsep}{4pt}
		\begin{tabular}{ccccccccc}
			\toprule
			\multirow{2}{*}{$N$} & \multicolumn{2}{c}{$v_1$}
			& \multicolumn{2}{c}{$v_2$}
			& \multicolumn{2}{c}{$B_1$}
			& \multicolumn{2}{c}{$B_2$}\\
			\cmidrule(lr){2-3}
			\cmidrule(lr){4-5}
			\cmidrule(lr){6-7}
			\cmidrule(lr){8-9}
			& Error & Order
			& Error & Order
			& Error & Order
			& Error & Order\\
			\midrule
			40
			& $6.23\times10^{-2}$ & ---
			& $6.34\times10^{-2}$ & ---
			& $4.41\times10^{-2}$ & ---
			& $4.43\times10^{-2}$ & ---\\
			80
			& $5.95\times10^{-3}$ & 3.39
			& $6.04\times10^{-3}$ & 3.39
			& $4.13\times10^{-3}$ & 3.42
			& $4.15\times10^{-3}$ & 3.42\\
			160
			& $6.97\times10^{-4}$ & 3.09
			& $7.01\times10^{-4}$ & 3.11
			& $4.78\times10^{-4}$ & 3.11
			& $4.85\times10^{-4}$ & 3.10\\
			320
			& $8.32\times10^{-5}$ & 3.07
			& $8.34\times10^{-5}$ & 3.07
			& $5.51\times10^{-5}$ & 3.12
			& $5.55\times10^{-5}$ & 3.13\\
			\bottomrule
		\end{tabular}
	\end{table}
\end{numexample}

\subsection{One-dimensional Riemann problems}
\label{subsec:num-riemann}

We now investigate one-dimensional Riemann problems in extreme physical regimes. Computational domains, stopping times, and boundary conditions follow \cite{WuShu2019}. Numerical solutions are benchmarked against high-resolution reference solutions, and discrete entropy dissipation is monitored via the global entropy increment \eqref{eq:num-global-entropy}.

\begin{numexample}[Near-vacuum shock tube]
	\label{ex:vacuum}
	We consider the near-vacuum shock tube problem \cite{ChristliebEtAl2015,WuShu2019} on $[-0.5,0.5]$:
	\begin{equation}\label{eq:num-vacuum}
		(\rho,\bm v^\top,p,\bm B^\top)(x_1,0)=
		\begin{cases}
			(10^{-12},0,0,0,10^{-12},0,0,0), & x_1<0,\\
			(1,0,0,0,0.5,0,1,0), & x_1>0.
		\end{cases}
	\end{equation}
	The left state possesses an extremely low density and pressure of $10^{-12}$, which places a stringent demand on numerical admissibility. We compute up to $T=0.1$ on $200$ uniform cells and benchmark against a $5000$-cell numerical reference.
	
	Figure~\ref{fig:num-vacuum} displays the density and pressure profiles computed by Method~1, which agree closely with the reference. The global entropy increments remain strictly nonpositive throughout the simulation for both methods, confirming fully discrete entropy stability.
	
	\begin{figure}[tbp]
		\centering
		\includegraphics[width=0.32\linewidth]{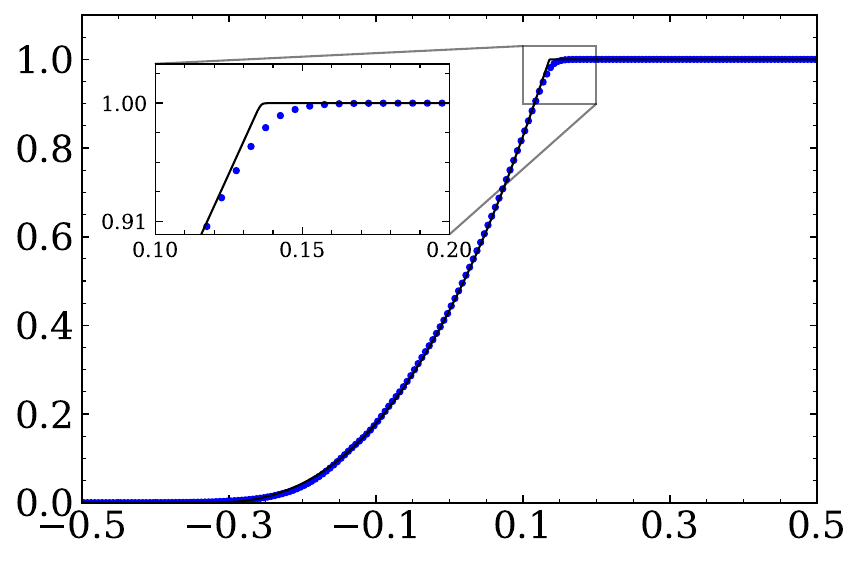}
		\includegraphics[width=0.32\linewidth]{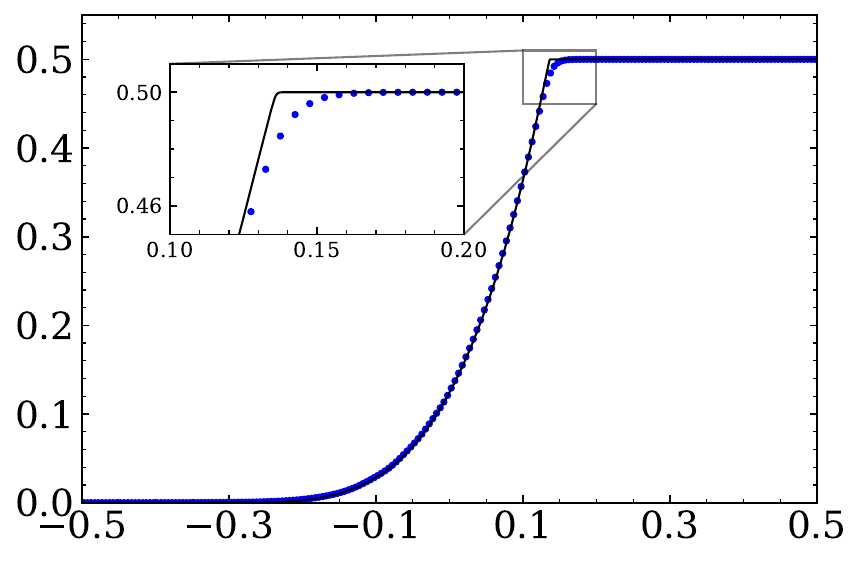}
		\includegraphics[width=0.30\linewidth]{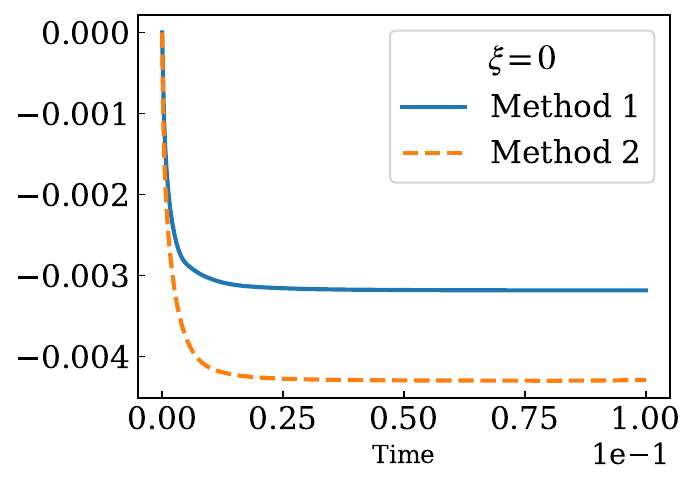}
		\caption{Example~\ref{ex:vacuum}: density $\rho$ (left) and gas pressure $p$ (center) computed by Method~1 at $T=0.1$; entropy increments \eqref{eq:num-global-entropy} for Methods~1 and~2 with $\xi=0$ (right).}
		\label{fig:num-vacuum}
	\end{figure}
\end{numexample}

\begin{numexample}[Strong-field LeBlanc problem]
	\label{ex:leblanc}
	We simulate the strongly magnetized LeBlanc problem \cite{WuShu2019} on $[-10,10]$:
	\begin{equation}\label{eq:num-leblanc}
		(\rho,\bm v^\top,p,\bm B^\top)(x_1,0)=
		\begin{cases}
			(2,0,0,0,10^9,0,5000,5000), & x_1<0,\\
			(10^{-3},0,0,0,1,0,5000,5000), & x_1>0,
		\end{cases}
	\end{equation}
	with $\gamma=1.4$. This setup features an initial pressure ratio of $10^9$ and a plasma beta as low as $\beta=2p/|\bm B|^2=4\times10^{-8}$ in the right state, with vanishing normal magnetic field $B_1\equiv 0$. We compute up to $T=3\times10^{-5}$ on $2000$ uniform cells against a $10{,}000$-cell reference solution.
	
	Figure~\ref{fig:num-leblanc} shows the computed density and magnetic pressure profiles for Method~1 alongside the fine-grid reference. The global entropy increments for both methods are virtually indistinguishable and remain strictly nonpositive, demonstrating robust entropy dissipation in the presence of very large pressure jumps and dominant magnetic fields.
	
	\begin{figure}[!htbp]
		\centering
		\includegraphics[width=0.33\linewidth]{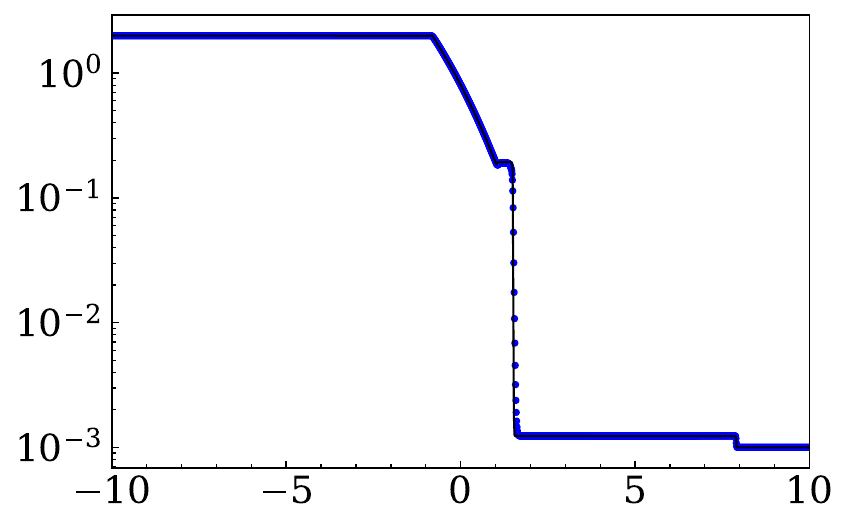}
		\includegraphics[width=0.31\linewidth]{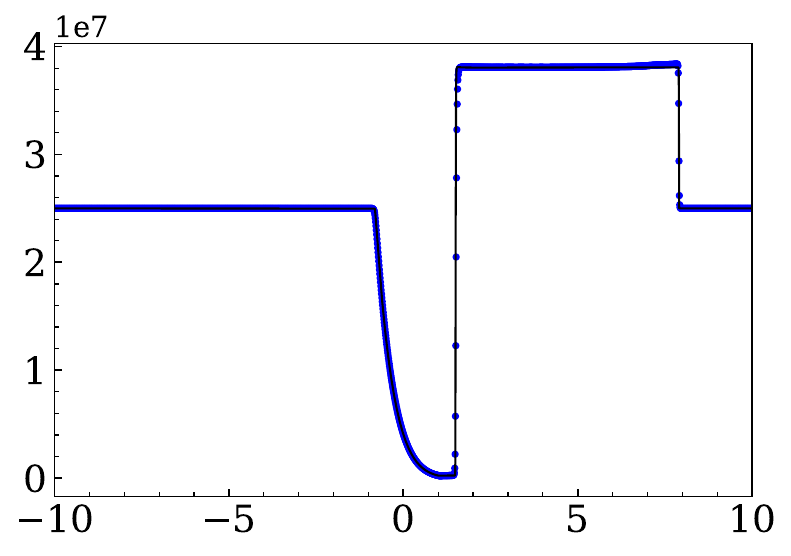}
		\includegraphics[width=0.29\linewidth]{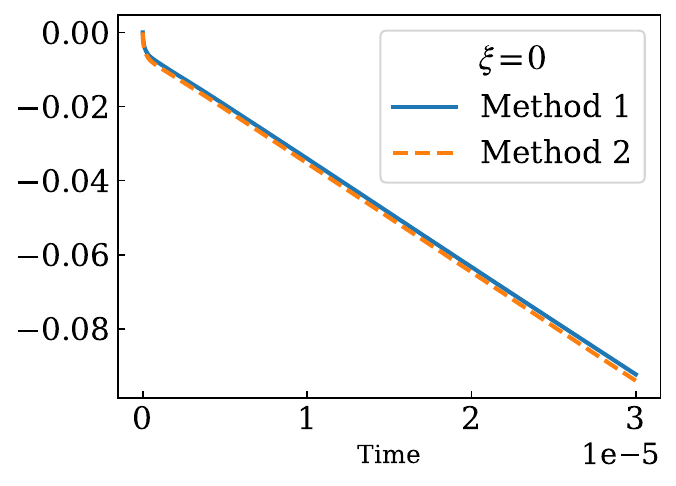}
		\caption{Example~\ref{ex:leblanc}: density $\rho$ (left) and magnetic pressure $|\bm B|^2/2$ (center) computed by Method~1 at $T=3\times10^{-5}$; entropy increments \eqref{eq:num-global-entropy} for Methods~1 and~2 with $\xi=0$ (right).}
		\label{fig:num-leblanc}
	\end{figure}
\end{numexample}

\subsection{Two-dimensional discontinuous solutions}\label{subsec:num-2d}

\begin{numexample}[Magnetized blast waves]
	\label{ex:blast}
	We compare the two wave-speed selection strategies on the magnetized blast-wave problem \cite{BalsaraSpicer1999,WuShu2019} on $\Omega=[-0.5,0.5]^2$. The initial state is
	\[
	\rho=1,\qquad
	\bm v=\bm 0,\qquad
	\bm B=(B_1,0,0)^\top,
	\]
	with the gas pressure initialized as
	\[
	p=
	\begin{cases}
		p_{\rm in}, & x_1^2+x_2^2\le 0.1^2,\\
		0.1,        & x_1^2+x_2^2>0.1^2.
	\end{cases}
	\]
	We set $\gamma=1.4$, enforce outflow boundary conditions following \cite{WuShu2019}, and discretize the domain with a uniform $320\times320$ rectangular mesh. Table~\ref{tab:num-blast} summarizes the parameters for two configurations, where $\beta_{\rm amb}=2p/|\bm B|^2$ denotes the ambient plasma beta.
	
	\begin{table}[!htbp]
		\caption{Example~\ref{ex:blast}: parameters of the two configurations.}
		\label{tab:num-blast}
		\centering
		\small
		\begin{tabular}{lcccc}
			\toprule
			Case & $B_1$ & $p_{\rm in}$ & $\beta_{\rm amb}$ & $T$\\
			\midrule
			I& $100/\sqrt{4\pi}$
			& $10^3$
			& $2.51\times10^{-4}$
			& $0.01$\\
			II
			& $1000/\sqrt{4\pi}$
			& $10^4$
			& $2.51\times10^{-6}$
			& $0.001$\\
			\bottomrule
		\end{tabular}
	\end{table}
	
	For Case~I ($B_1=100/\sqrt{4\pi}$), Figure~\ref{fig:num-blast100} shows that both methods produce nearly identical density profiles with nonpositive entropy increments, while Method~1 yields smaller HLL jump dissipation \eqref{eq:num-hll-jump} over most of the simulation. For Case~II ($B_1=1000/\sqrt{4\pi}$, $\beta_{\rm amb}=2.51\times10^{-6}$), Figure~\ref{fig:num-blast1000} displays the density and magnetic pressure for Method~1; the entropy increments again remain strictly nonpositive for both methods.
	
	\begin{figure}[!htbp]
		\centering
		\includegraphics[width=0.47\linewidth]{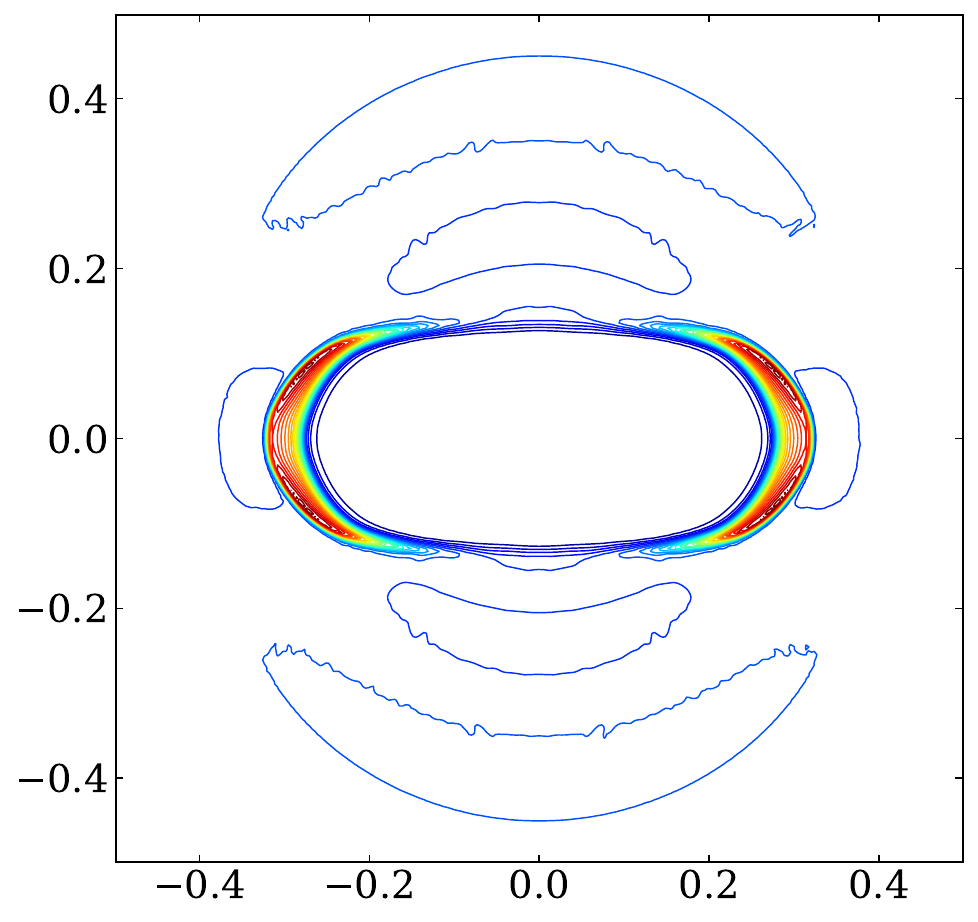}\hfill
		\includegraphics[width=0.47\linewidth]{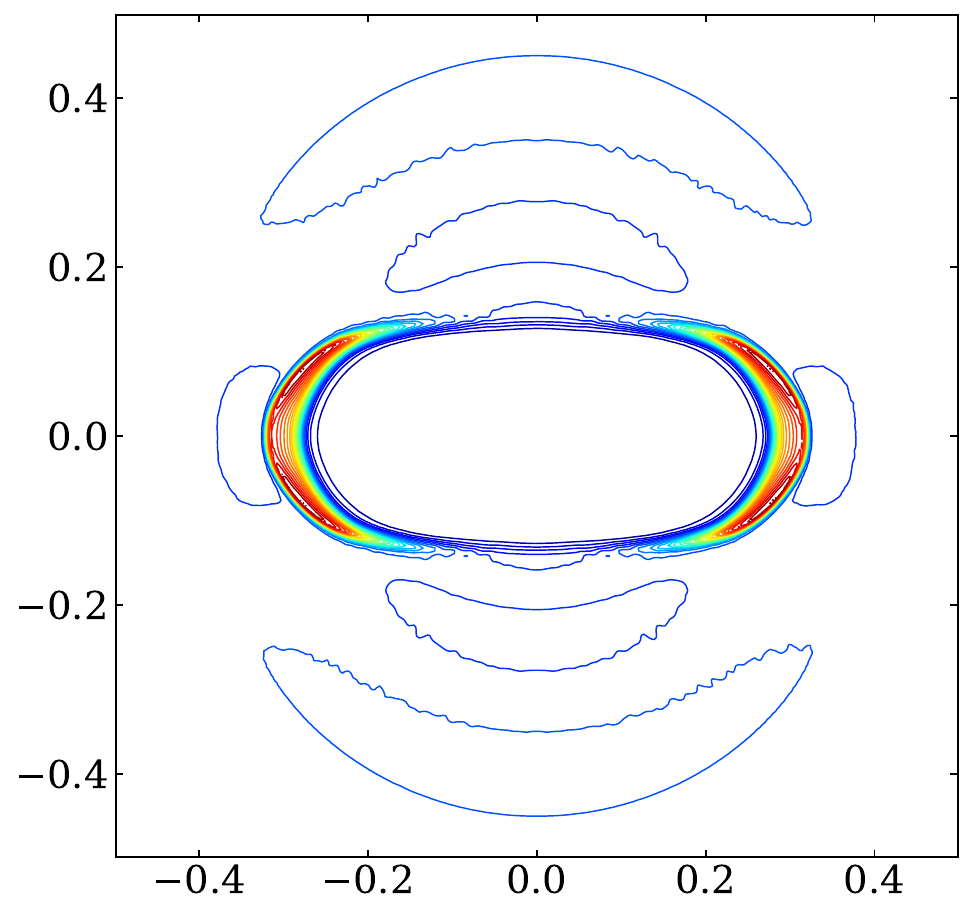}
		
		\medskip
		
		\includegraphics[width=0.47\linewidth]{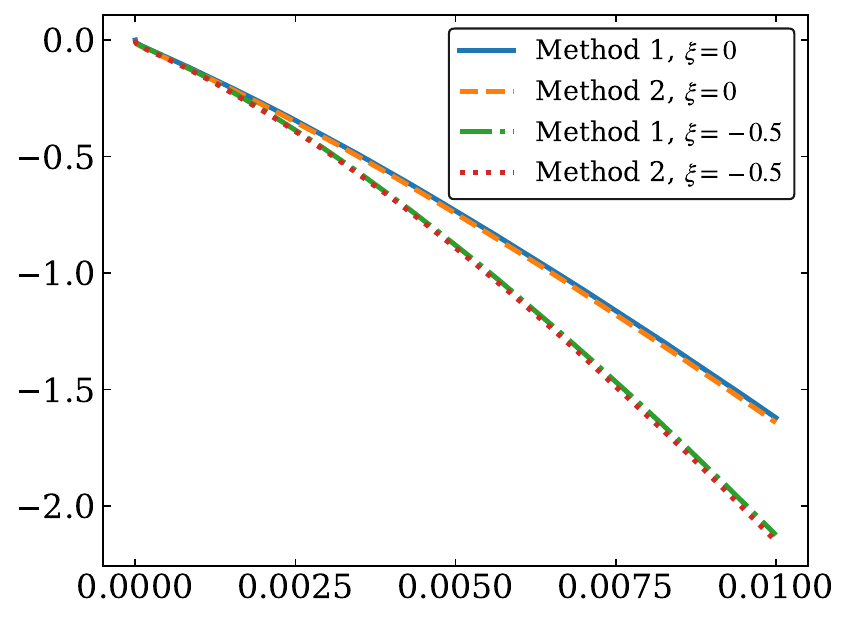}\hfill
		\includegraphics[width=0.47\linewidth]{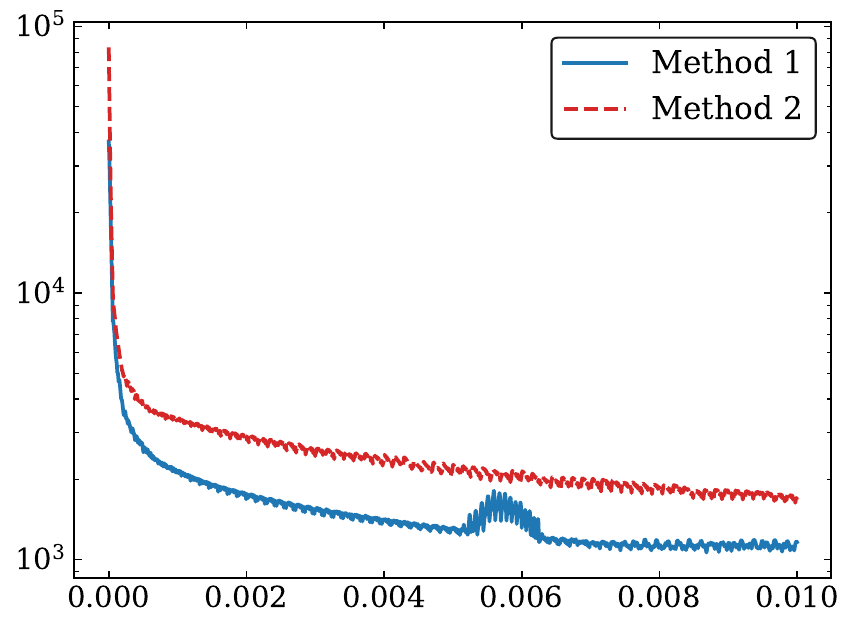}
		
		\caption{Example~\ref{ex:blast}, Case~I ($B_1=100/\sqrt{4\pi}$): density $\rho$ at $T=0.01$ computed by Method~1 (top left) and Method~2 (top right); entropy increments \eqref{eq:num-global-entropy} with $\xi=0,-0.5$ (bottom left) and maximum HLL jump dissipation \eqref{eq:num-hll-jump} (bottom right), for Methods~1 and~2.}
		\label{fig:num-blast100}
	\end{figure}
	
	\begin{figure}[!htbp]
		\centering
		\includegraphics[width=0.31\linewidth]{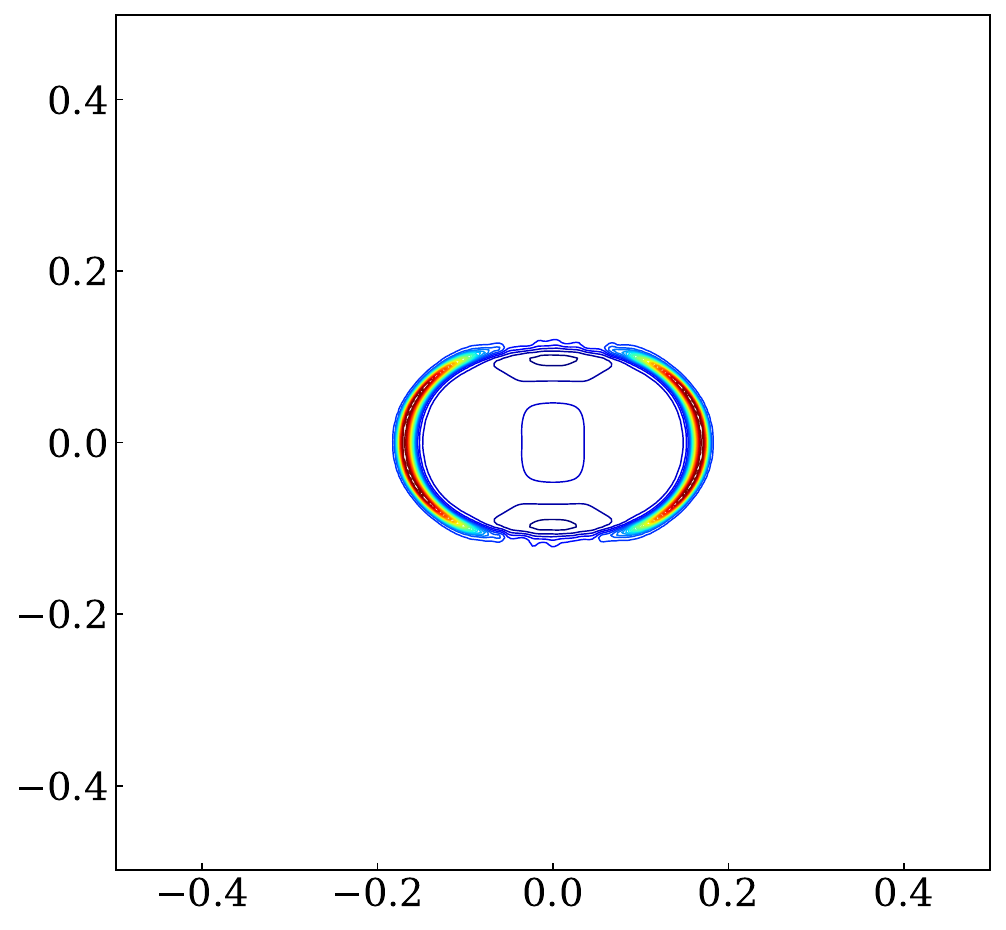}\hfill
		\includegraphics[width=0.31\linewidth]{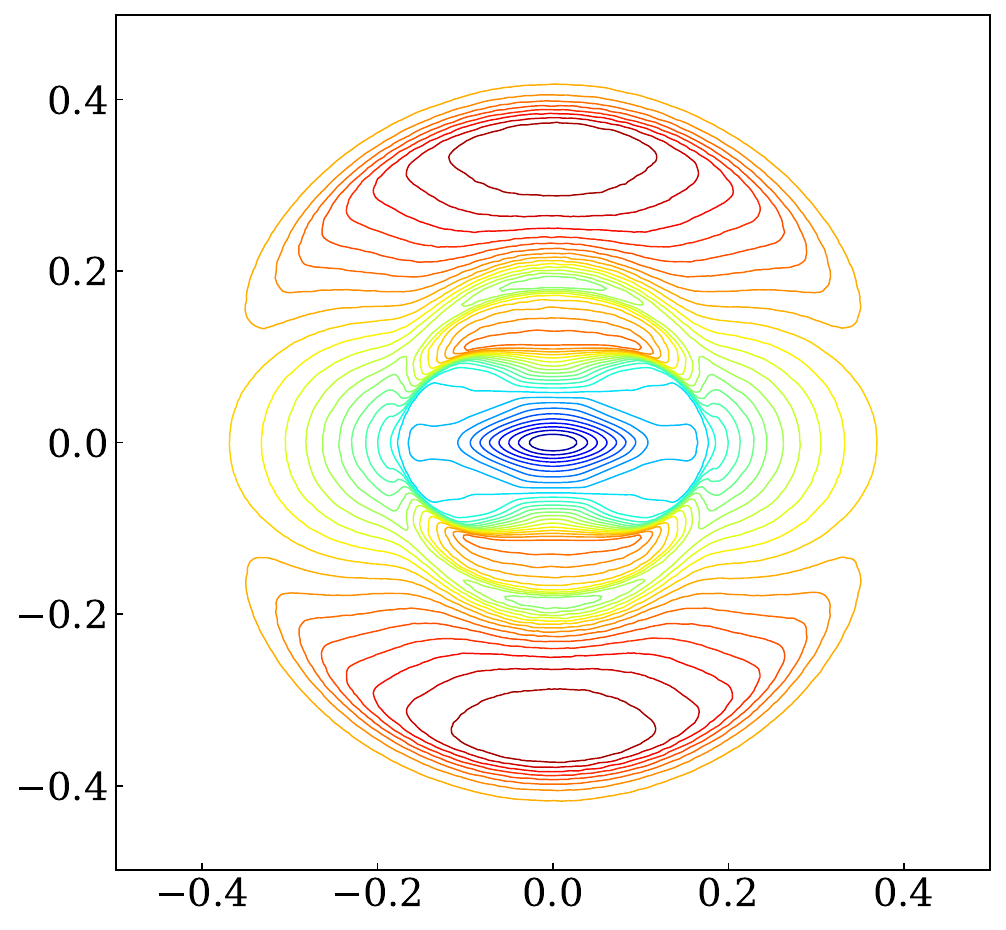}\hfill
		\includegraphics[width=0.37\linewidth]{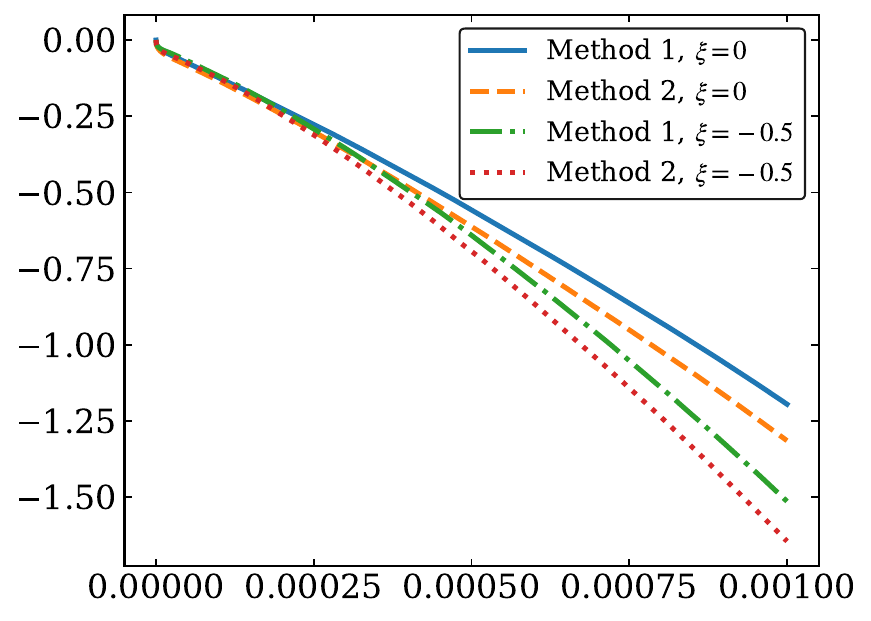}
		\caption{Example~\ref{ex:blast}, Case~II ($B_1=1000/\sqrt{4\pi}$): density $\rho$ (left) and magnetic pressure $|\bm B|^2/2$ (center) computed by Method~1 at $T=0.001$; entropy increments \eqref{eq:num-global-entropy} for Methods~1 and~2 with $\xi=0,-0.5$ (right).}
		\label{fig:num-blast1000}
	\end{figure}
\end{numexample}

\begin{numexample}[MHD rotor]
	\label{ex:rotor}
	The MHD rotor problem \cite{BalsaraSpicer1999} on $\Omega=[0,1]^2$ models a dense spinning disk driving torsional waves into a uniformly magnetized ambient medium. Define the radial coordinates
	\[
	r=\sqrt{(x_1-0.5)^2+(x_2-0.5)^2},
	\qquad
	r_1=0.1,\qquad
	r_2=0.115,
	\qquad
	\phi=\frac{r_2-r}{r_2-r_1}.
	\]
	The initial data are
	\begin{equation}\label{eq:num-rotor}
		(\rho,\bm v^\top,p,\bm B^\top)=
		\begin{cases}
			\displaystyle
			\left(
			10,
			-\frac{x_2-0.5}{r_1},\frac{x_1-0.5}{r_1},0,
			0.5,
			\frac{2.5}{\sqrt{4\pi}},0,0
			\right),
			& r\le r_1,\\[2ex]
			\displaystyle
			\left(
			1+9\phi,
			-\frac{\phi(x_2-0.5)}{r},
			\frac{\phi(x_1-0.5)}{r},0,
			0.5,
			\frac{2.5}{\sqrt{4\pi}},0,0
			\right),
			& r_1<r\le r_2,\\[2ex]
			\displaystyle
			\left(
			1,0,0,0,
			0.5,
			\frac{2.5}{\sqrt{4\pi}},0,0
			\right),
			& r>r_2 .
		\end{cases}
	\end{equation}
	Following \cite[Example~4]{ChenJinQiuZhao2026}, we impose outflow boundary conditions, employ a uniform $400\times400$ rectangular mesh, and compute up to $T=0.295$.
	
	Figure~\ref{fig:num-rotor} shows the density and sonic Mach number $\mathrm{Ma}=|\bm v|/\sqrt{\gamma p/\rho}$ computed by Method~1, sharply resolving the central rotor and the outgoing wave patterns. The global entropy increments for both entropies remain nonpositive throughout the evolution.
	
	\begin{figure}[!htbp]
		\centering
		\includegraphics[width=0.30\linewidth]{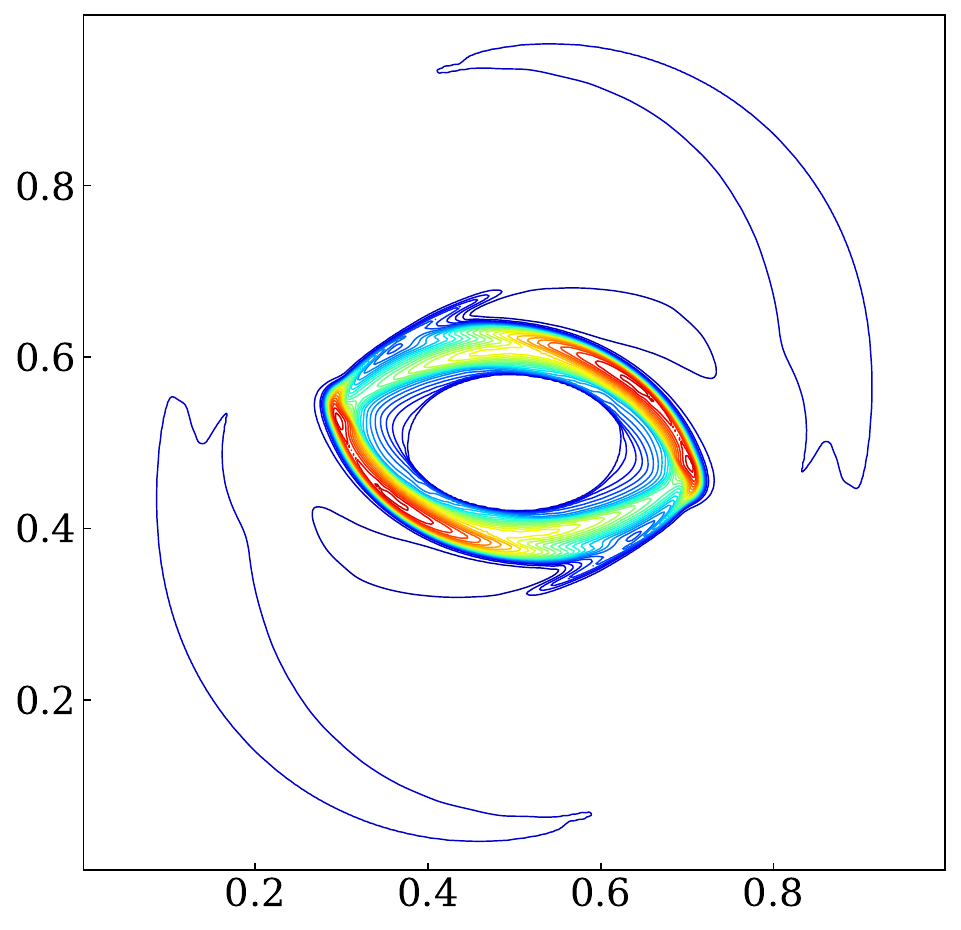}\hfill
		\includegraphics[width=0.30\linewidth]{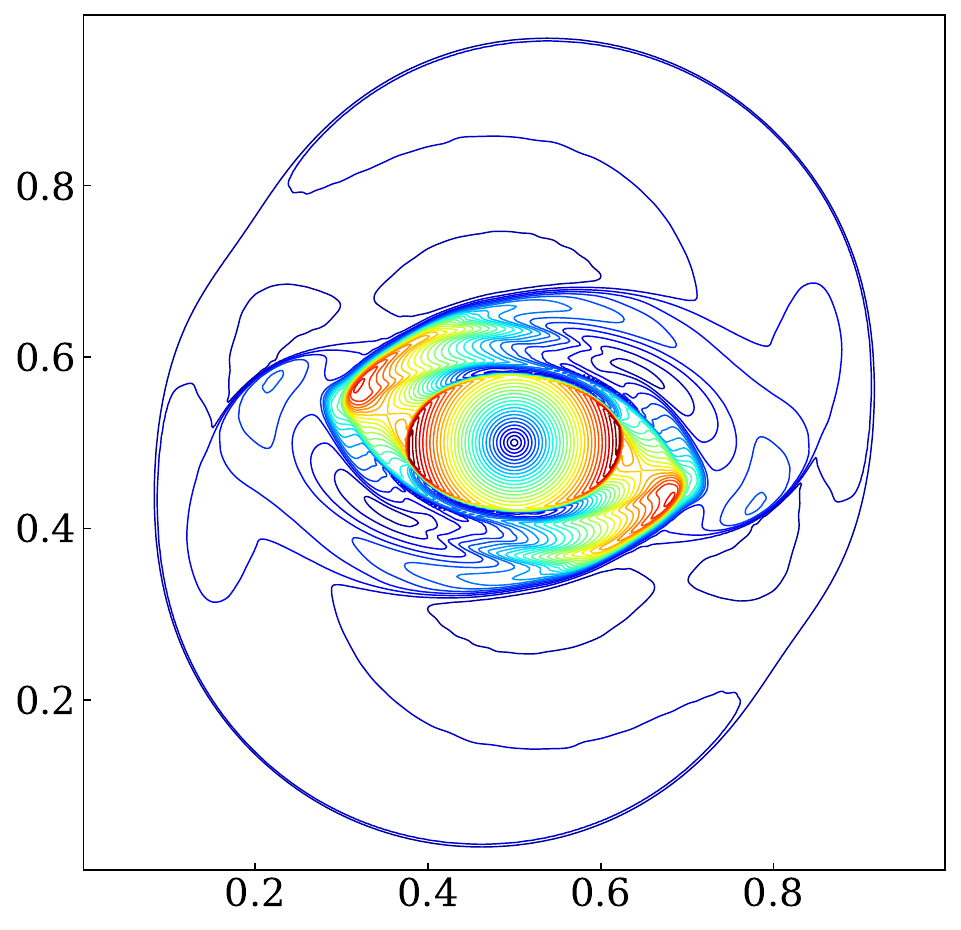}\hfill
		\includegraphics[width=0.37\linewidth]{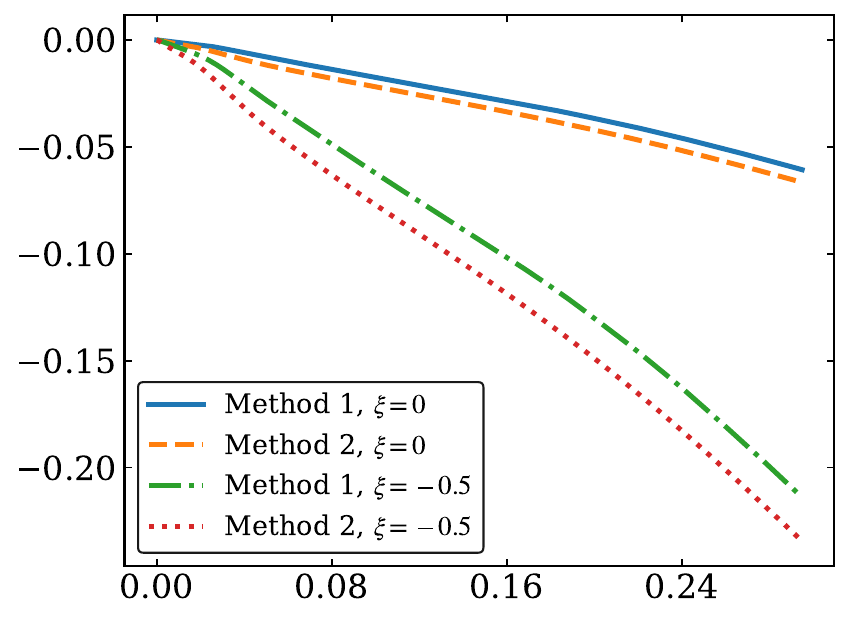}
		\caption{Example~\ref{ex:rotor}: density $\rho$ (left) and sonic Mach number (center) computed by Method~1 at $T=0.295$; entropy increments \eqref{eq:num-global-entropy} for Methods~1 and~2 with $\xi=0,-0.5$ (right).}
		\label{fig:num-rotor}
	\end{figure}
\end{numexample}

\begin{numexample}[Orszag--Tang vortex]
	\label{ex:ot}
	The Orszag--Tang vortex system \cite{JiangWu1999} on the periodic domain $\Omega=[0,2\pi]^2$ tests the transition to MHD turbulence and complex shock interactions. The initial state is
	\begin{equation}\label{eq:num-ot}
		(\rho,\bm v^\top,p,\bm B^\top)
		=
		\bigl(
		\gamma^2,\,
		-\sin x_2,\sin x_1,0,\,
		\gamma,\,
		-\sin x_2,\sin(2x_1),0
		\bigr).
	\end{equation}
	The solution is integrated to $T=4$ on a uniform $200\times200$ rectangular mesh.
	
	Figure~\ref{fig:num-ot} depicts the density and sonic Mach number at $T=4$ computed by Method~1, alongside the entropy history. The global entropy increments remain strictly nonpositive throughout the simulation for both methods, in agreement with the theoretical bound \eqref{eq:time-ms-global}.
	
	\begin{figure}[!htbp]
		\centering
		\includegraphics[width=0.29\linewidth]{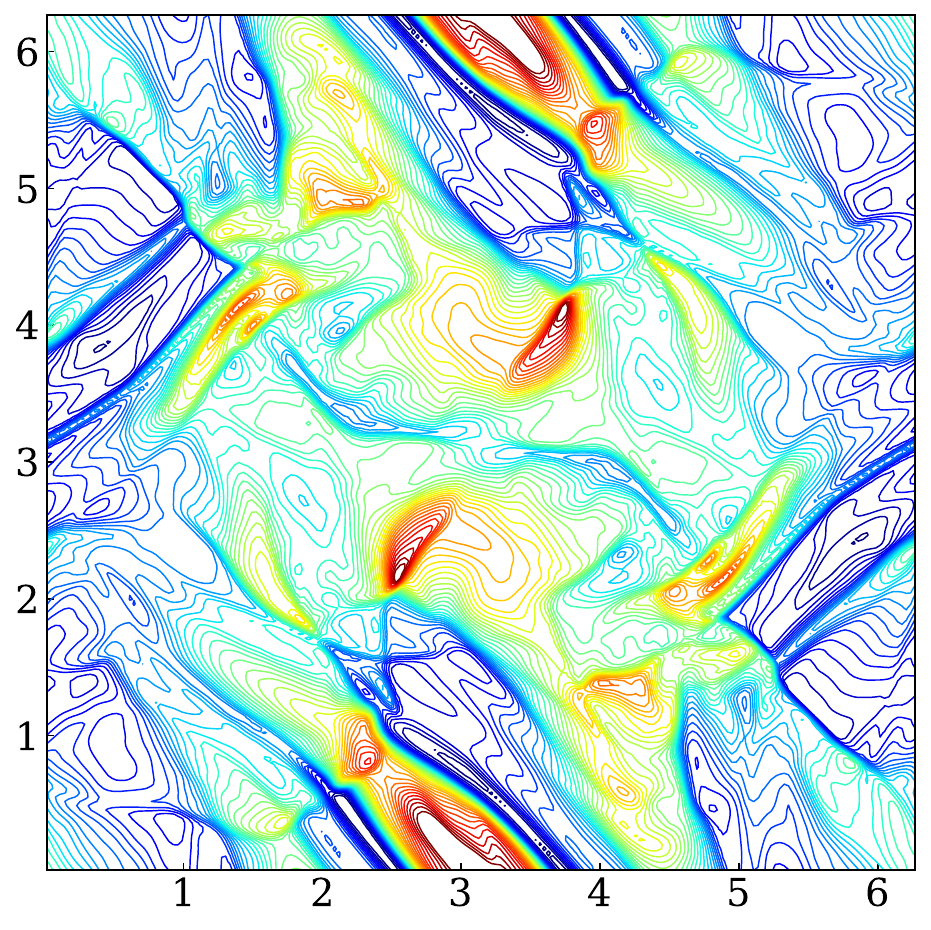}\hfill
		\includegraphics[width=0.29\linewidth]{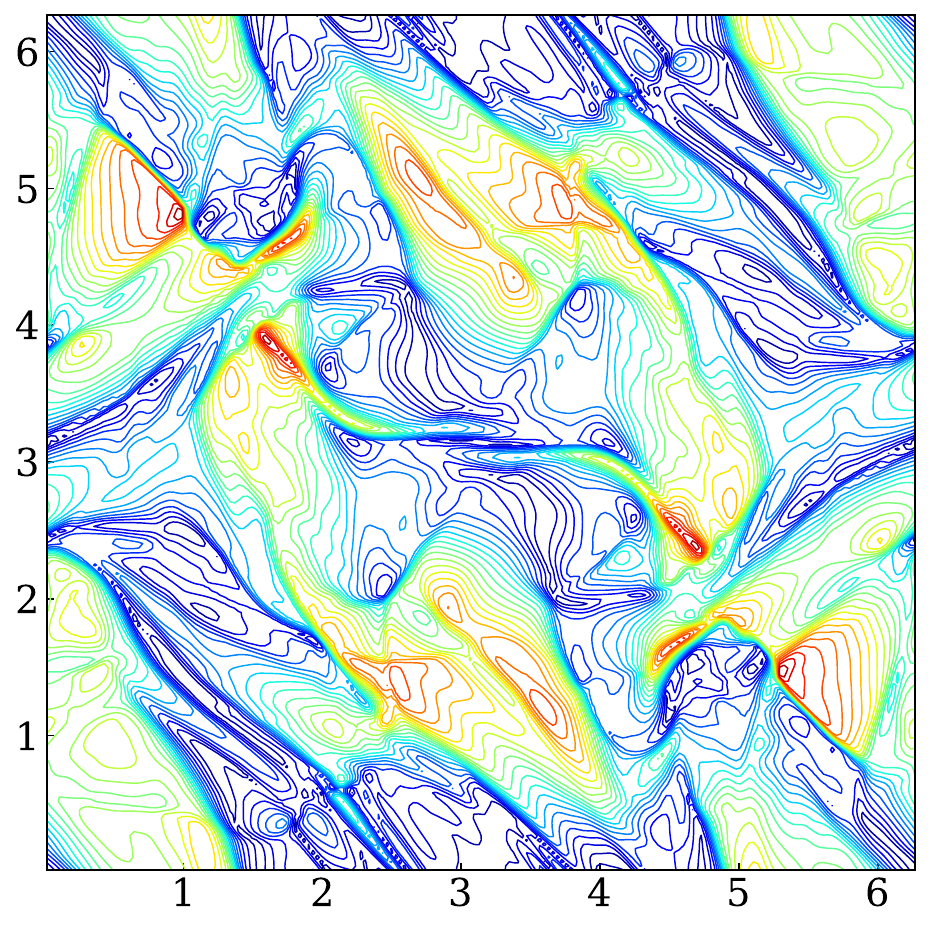}\hfill
		\includegraphics[width=0.39\linewidth]{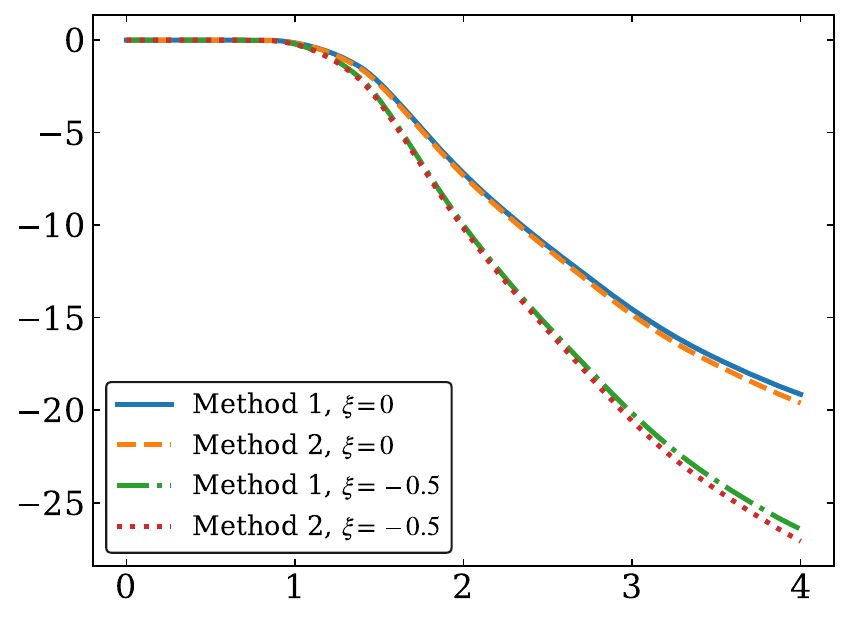}
		\caption{Example~\ref{ex:ot}: density $\rho$ (left) and sonic Mach number (center) computed by Method~1 at $T=4$; entropy increments \eqref{eq:num-global-entropy} for Methods~1 and~2 with $\xi=0,-0.5$ (right).}
		\label{fig:num-ot}
	\end{figure}
\end{numexample}

\begin{numexample}[Yee--Sj\"ogreen four-state Riemann problem]
	\label{ex:ys}
	The Yee--Sj\"ogreen four-state 2D Riemann problem \cite{YeeSjogreen2005,YueWuShu2026} is solved on $\Omega=[-1,1]^2$ with conservative initial states \eqref{eq:intro-state}:
	\begin{equation}\label{eq:num-ys}
		\bm U(x_1,x_2,0)=
		\begin{cases}
			(0.9308,1.4557,-0.4633,0.0575,0.3501,0.9830,0.3050,5.0838)^\top,
			& x_1,x_2>0,\\
			(1.0304,1.5774,-1.0455,-0.1016,0.3501,0.5078,0.1576,5.7813)^\top,
			& x_1<0<x_2,\\
			(1.0000,1.7500,-1.0000,0,0.5642,0.5078,0.2539,6.0000)^\top,
			& x_1,x_2<0,\\
			(1.8887,0.2334,-1.7422,0.0733,0.5642,0.9830,0.4915,12.999)^\top,
			& x_2<0<x_1 .
		\end{cases}
	\end{equation}
	We employ a uniform $256\times256$ mesh aligned with the initial discontinuity lines $x_1=0$ and $x_2=0$. Because the normal magnetic components are continuous across these lines ($B_1$ across $x_1=0$ and $B_2$ across $x_2=0$), the initial jump $\delta B_{\bm n}$ in \eqref{eq:md-corrected-hll-states} vanishes identically, reducing the corrected states $\bm T^\pm$ to the standard HLL state $\bm H$. With zero-gradient boundary extrapolation, the solution is advanced to $T=0.2$.
	
	Figure~\ref{fig:num-ys} shows the density and $B_1$ contours computed by Method~1. The entropy increments for both methods are virtually identical and remain strictly nonpositive at all times.
	
	\begin{figure}[!htbp]
		\centering
		\includegraphics[width=0.29\linewidth]{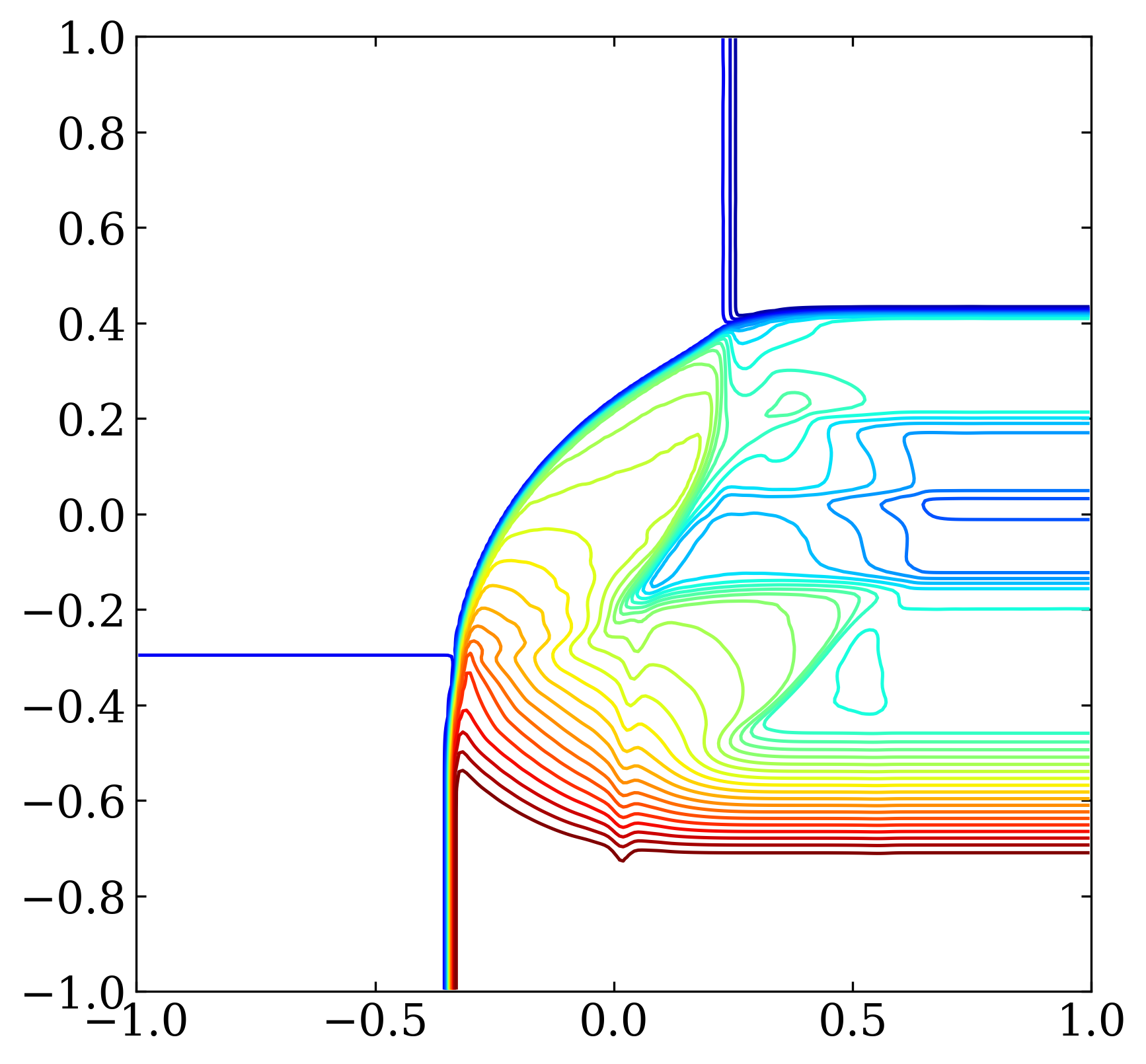}\hfill
		\includegraphics[width=0.29\linewidth]{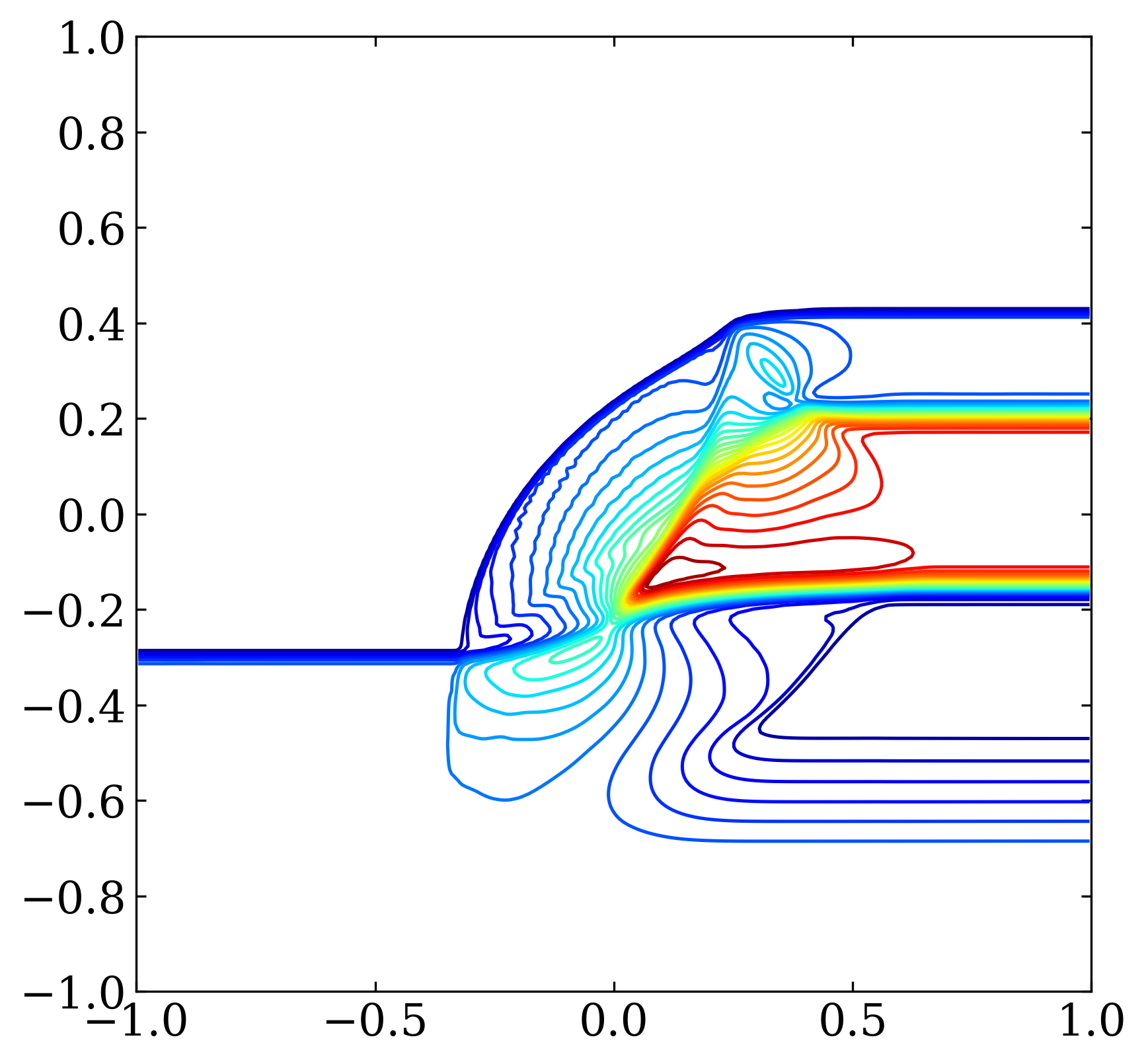}\hfill
		\includegraphics[width=0.39\linewidth]{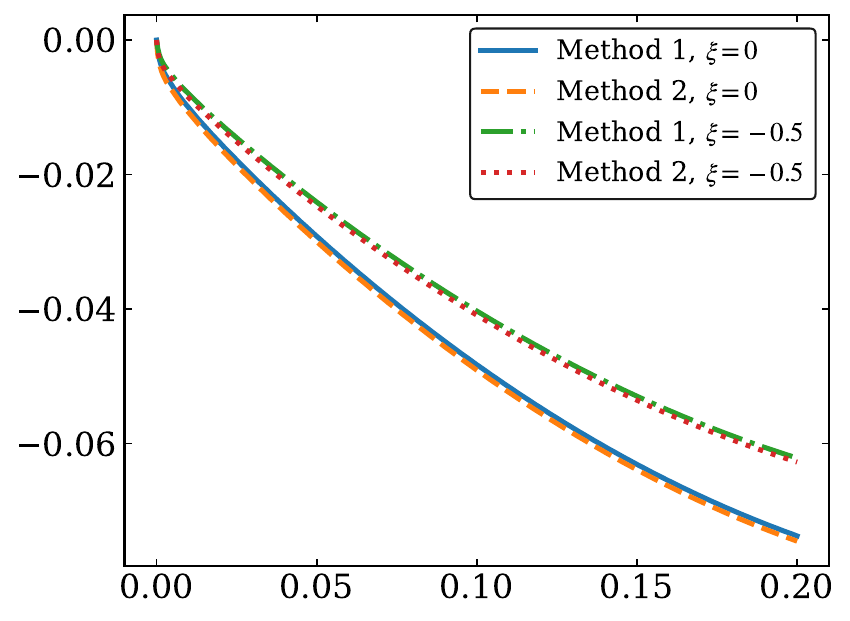}
		\caption{Example~\ref{ex:ys}: density $\rho$ (left) and magnetic-field component $B_1$ (center) computed by Method~1 at $T=0.2$; entropy increments \eqref{eq:num-global-entropy} for Methods~1 and~2 with $\xi=0,-0.5$ (right).}
		\label{fig:num-ys}
	\end{figure}
\end{numexample}

\begin{numexample}[Shock--cloud interaction]
	\label{ex:sc}
	The shock--cloud interaction problem \cite{DaiWoodward1998,Toth2000,WuShu2019} on $\Omega=[0,1]^2$ models a planar shock impacting a dense magnetic cloud. The background states are given by
	\begin{equation}\label{eq:num-sc}
		(\rho,\bm v^\top,p,\bm B^\top)(x_1,x_2,0)=
		\begin{cases}
			(3.86859, 0,0,0,167.345,
			0,2.1826182,-2.1826182), & x_1<0.6,\\
			(1,-11.2536,0,0,1,
			0,0.56418958,0.56418958), & x_1>0.6.
		\end{cases}
	\end{equation}
	A high-density circular cloud ($\rho=10$, radius $0.15$) centered at $(0.8,0.5)$ is embedded in the right pre-shock state. Supersonic inflow is imposed on the right boundary, with outflow on the other boundaries \cite{WuShu2019}. We compute to $T=0.06$ on a uniform $400\times400$ mesh.
	
	Figure~\ref{fig:num-sc} displays the density and pressure contours computed by Method~1, sharply resolving the bow shock, cloud deformation, and downstream vortex wake. The global entropy residual \eqref{eq:num-entropy-residual}, which incorporates boundary entropy fluxes, remains nonpositive for all three entropies under both methods, verifying the global stability bound of Theorem~\ref{thm:time-ssp-multistep}.
	
	\begin{figure}[!htbp]
		\centering
		\includegraphics[width=0.47\linewidth]{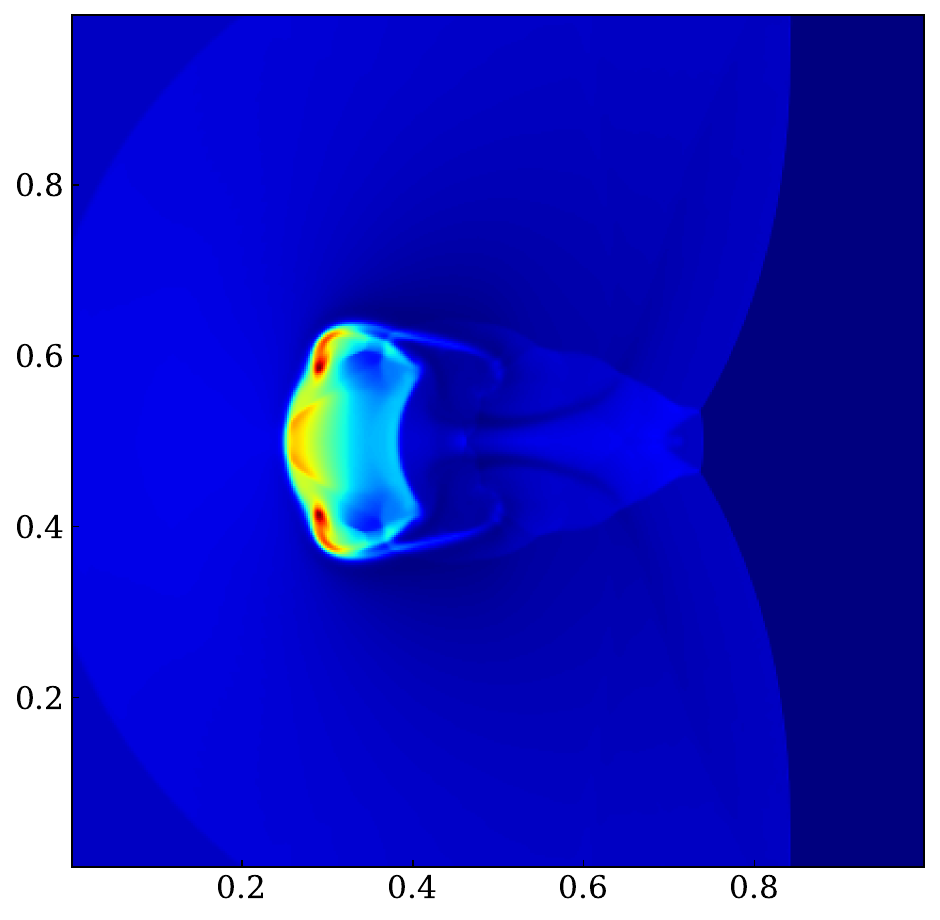}\hfill
		\includegraphics[width=0.47\linewidth]{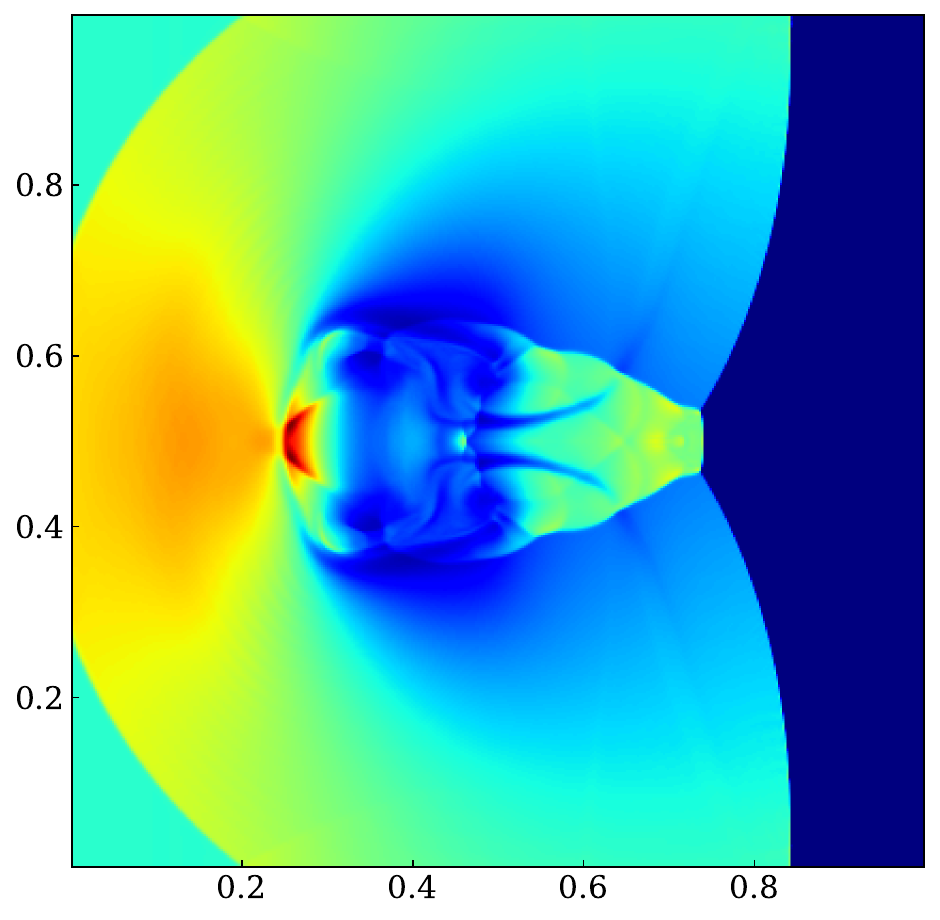}
		
		\medskip
		
		\includegraphics[width=0.47\linewidth]{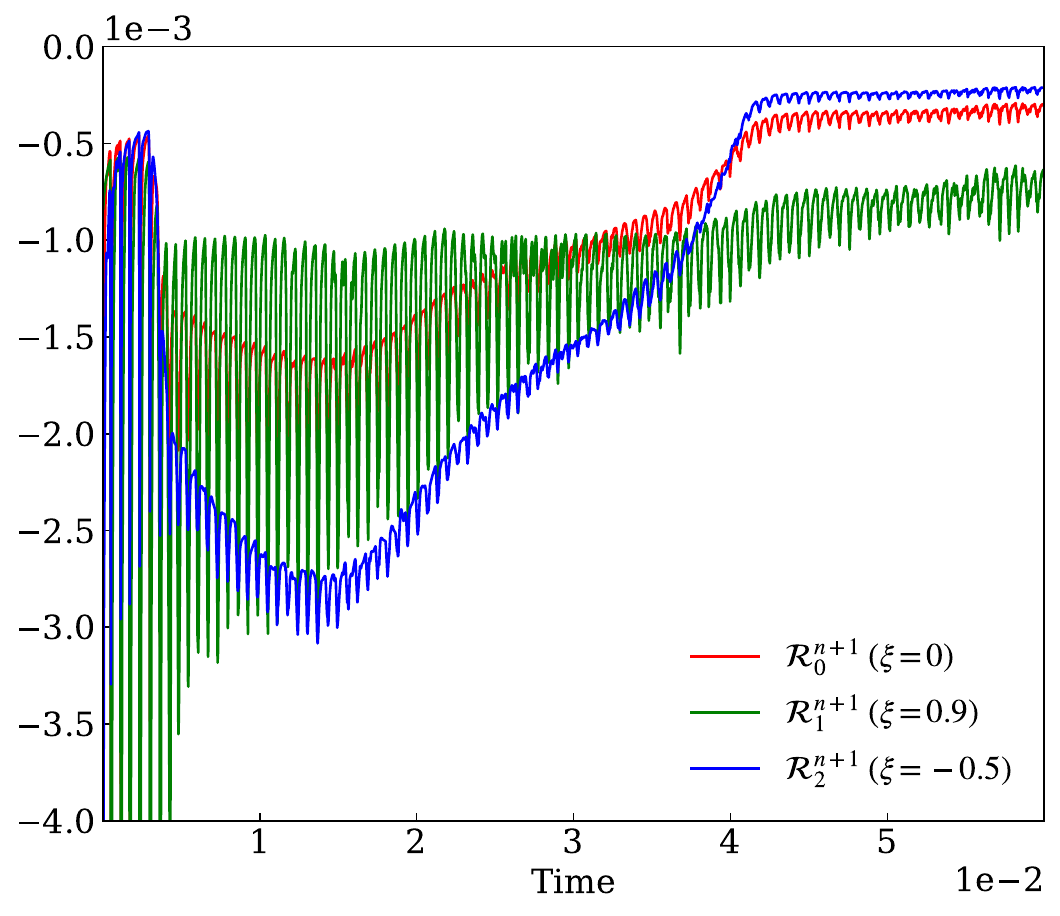}\hfill
		\includegraphics[width=0.47\linewidth]{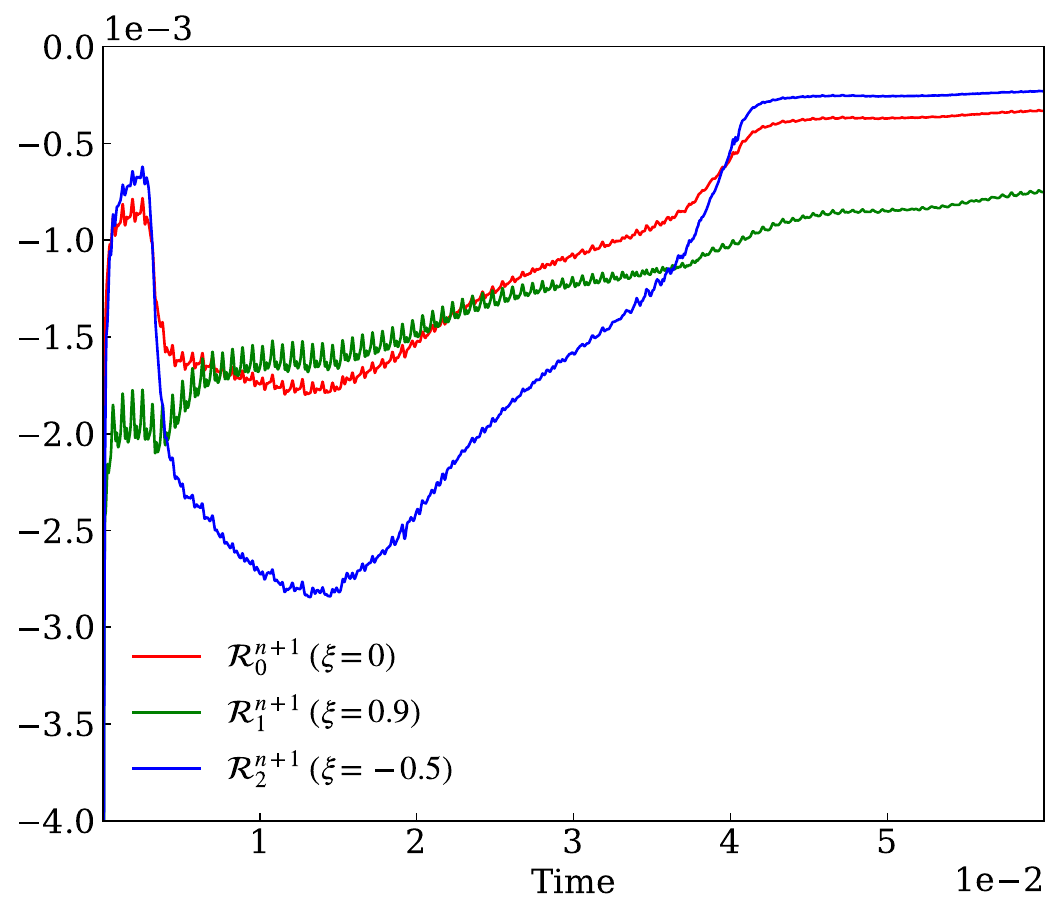}
		
		\caption{Example~\ref{ex:sc}: density $\rho$ (top left) and gas pressure $p$ (top right) computed by Method~1 at $T=0.06$; fully discrete global entropy residuals \eqref{eq:num-entropy-residual} for Method~1 (bottom left) and Method~2 (bottom right), with $\xi=0,0.9,-0.5$. The curves labeled $0,1,2$ correspond to $r=1,2,3$ in \eqref{eq:num-entropy-parameters}.}
		\label{fig:num-sc}
	\end{figure}
\end{numexample}

\begin{numexample}[Magnetized jets]\label{ex:jet}
	We simulate the Mach~800 astrophysical jet problem \cite{WuShu2019} under three magnetic field strengths. The ambient medium is initially at rest with
	\[
	(\rho,\bm v^\top,p,\bm B^\top)
	=
	(0.1\gamma,0,0,0,1,0,B_2,0),
	\qquad \gamma=1.4 .
	\]
	A dense supersonic jet is injected upward through the nozzle $\{(x_1,0):0\le x_1<0.05\}$ with state
	\[
	(\rho,\bm v^\top,p,\bm B^\top)
	=
	(\gamma,0,800,0,1,0,B_2,0).
	\]
	Exploiting symmetry, we compute on the half-domain $\Omega=[0,0.5]\times[0,1.5]$ with a reflecting wall at $x_1=0$ and outflow on other non-inflow boundaries, using a uniform $200\times600$ mesh up to $T=0.002$. Full-domain density profiles are mirrored about $x_1=0$. The transverse fields $B_2=\sqrt{200}$, $\sqrt{2000}$, and $\sqrt{20000}$ correspond to plasma betas $\beta=10^{-2}$, $10^{-3}$, and $10^{-4}$, respectively.
	
	For $\beta=10^{-2}$, Figure~\ref{fig:num-jet1} compares both methods, with Method~1 resolving slightly sharper shear structures and generating less HLL jump dissipation \eqref{eq:num-hll-jump}. Figure~\ref{fig:num-jet23} presents the strongly magnetized cases $\beta=10^{-3}$ and $10^{-4}$, illustrating tighter magnetic confinement along the jet axis. For all three field strengths, the global entropy residuals for both methods remain nonpositive and well controlled throughout the hypersonic evolution.
	
	\begin{figure}[!htbp]
		\centering
		\includegraphics[width=0.32\linewidth]{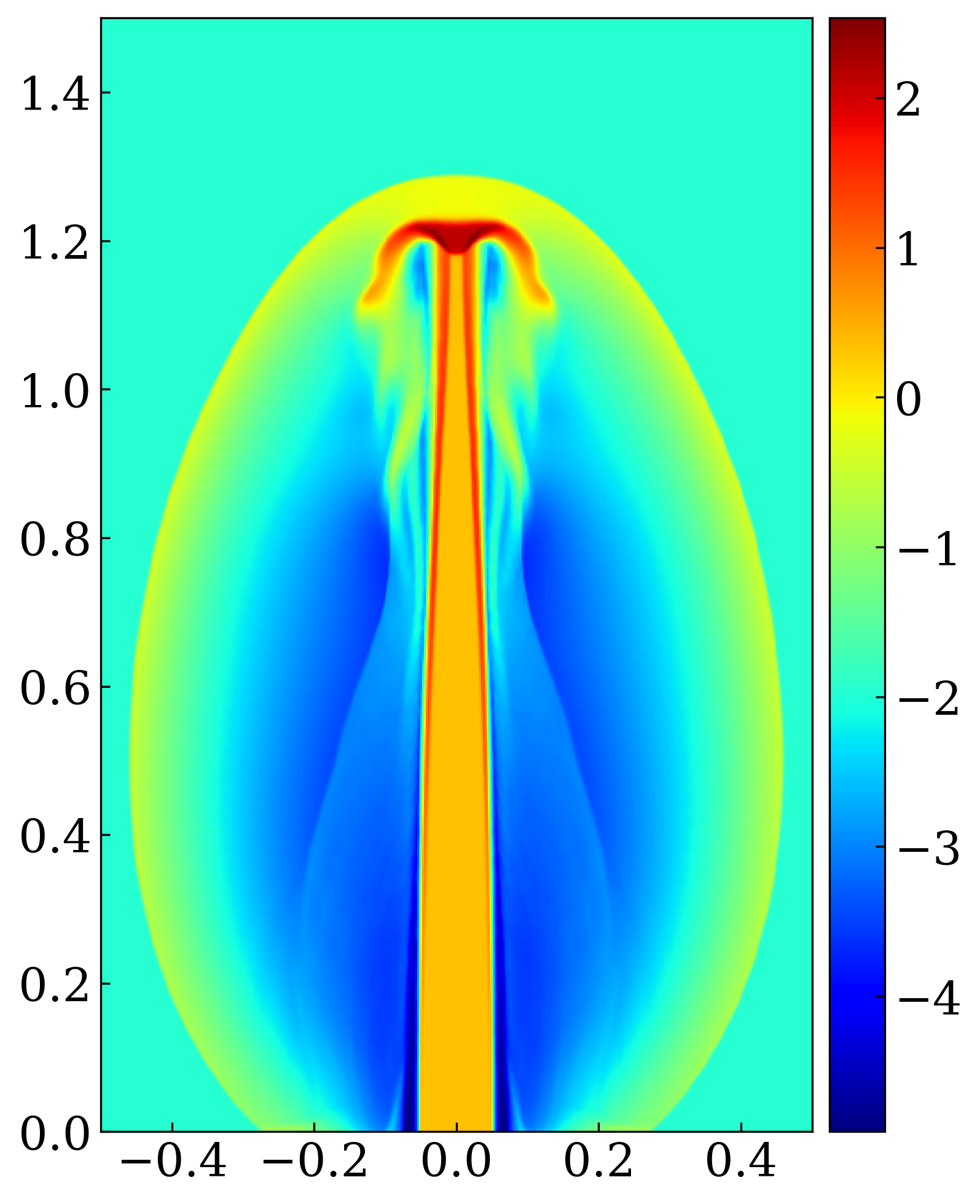}\hspace{0.08\linewidth}
		\includegraphics[width=0.32\linewidth]{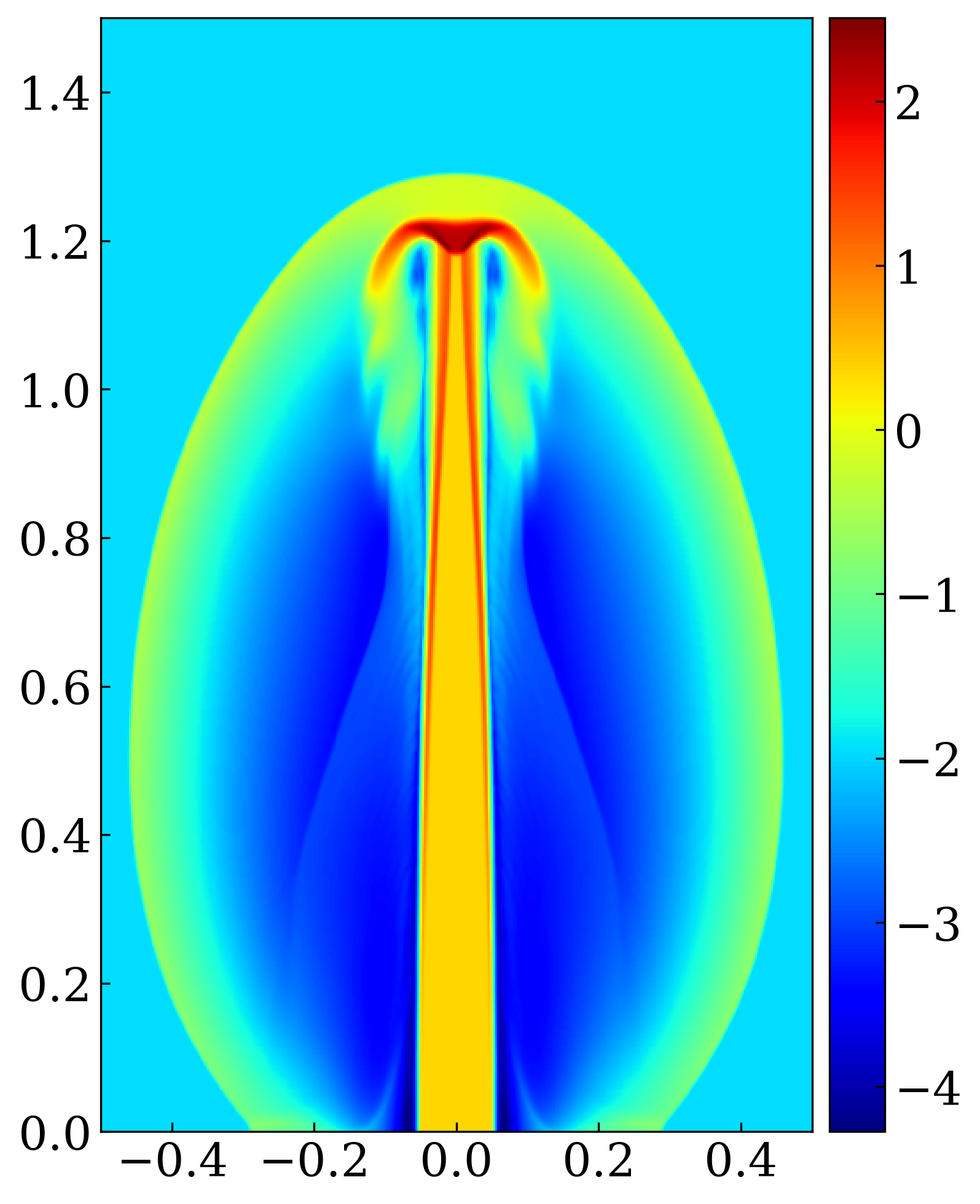}
		
		\medskip
		\includegraphics[width=0.43\linewidth]{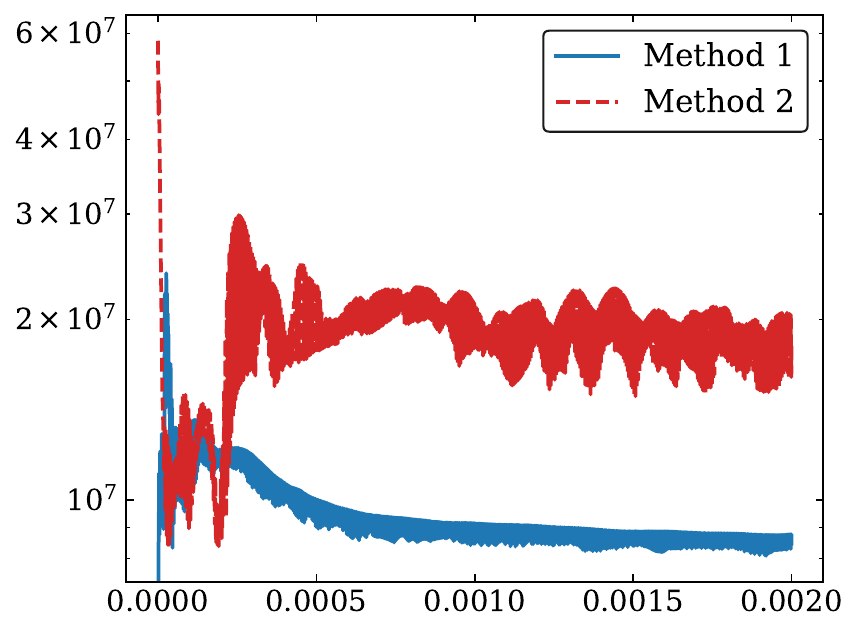}\hfill
		\includegraphics[width=0.5\linewidth]{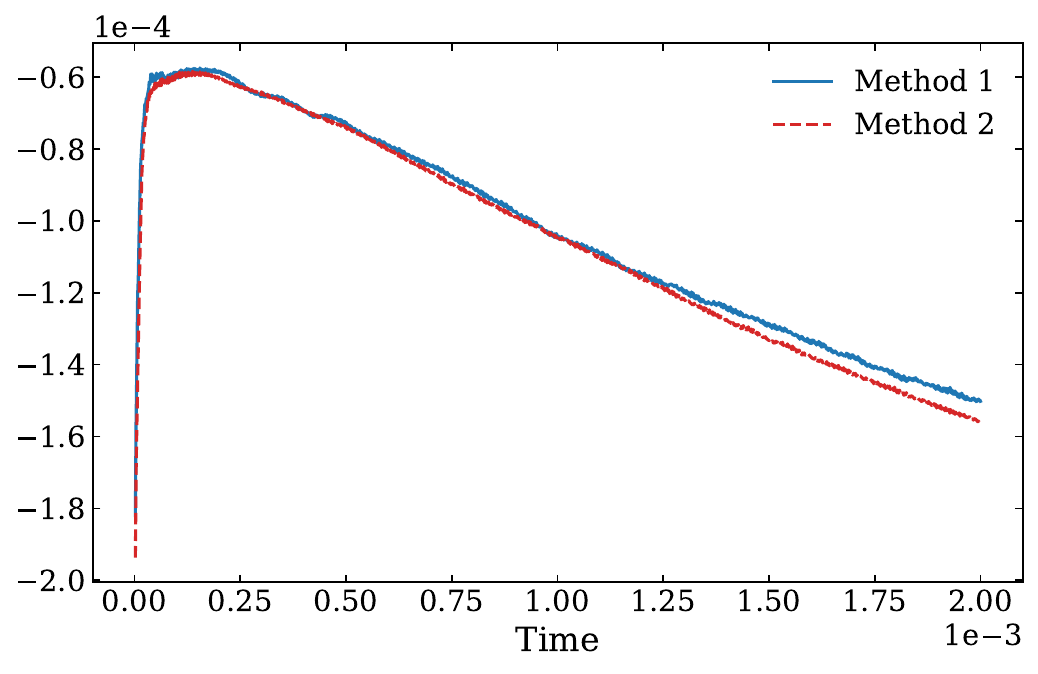}
		\caption{Example~\ref{ex:jet} with $\beta=10^{-2}$: $\log\rho$ at $T=0.002$ computed by Method~1 (top left) and Method~2 (top right); maximum HLL jump dissipation \eqref{eq:num-hll-jump} (bottom left) and fully discrete global entropy residuals \eqref{eq:num-entropy-residual} with $\xi=0$ (bottom right), for Methods~1 and~2.}
		\label{fig:num-jet1}
	\end{figure}
	
	\begin{figure}[!htbp]
		\centering
		\includegraphics[width=0.25\linewidth]{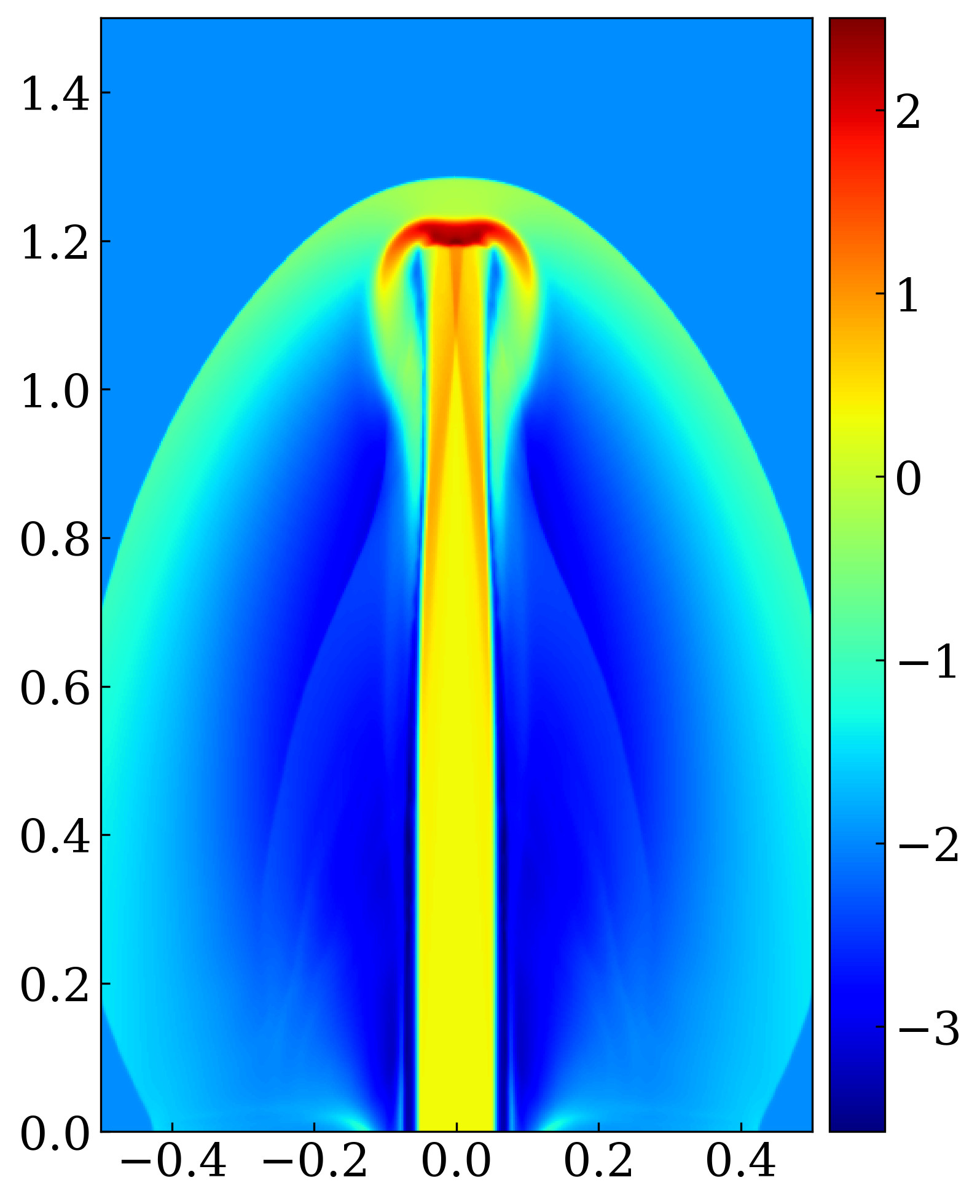}
		\includegraphics[width=0.25\linewidth]{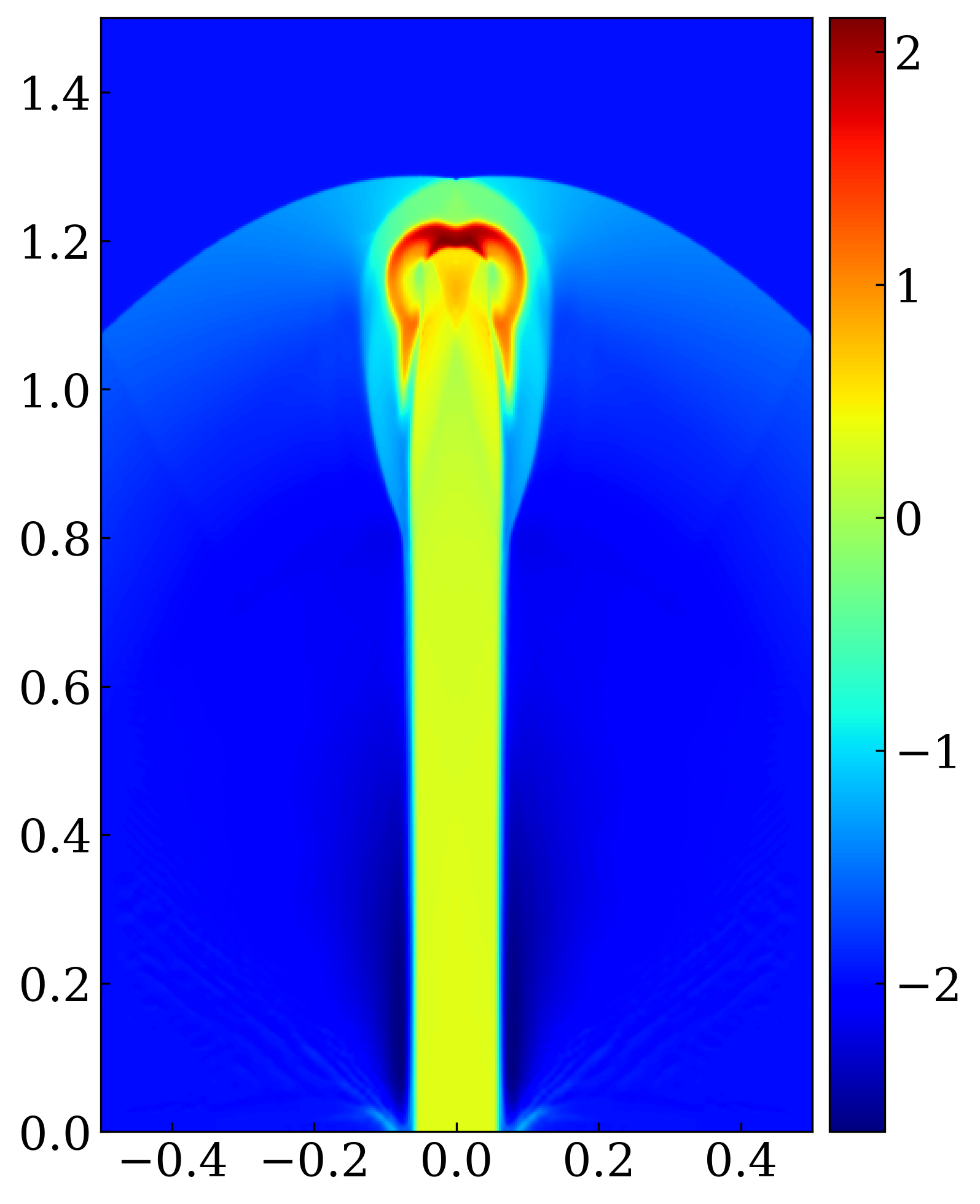}
		\includegraphics[width=0.4\linewidth]{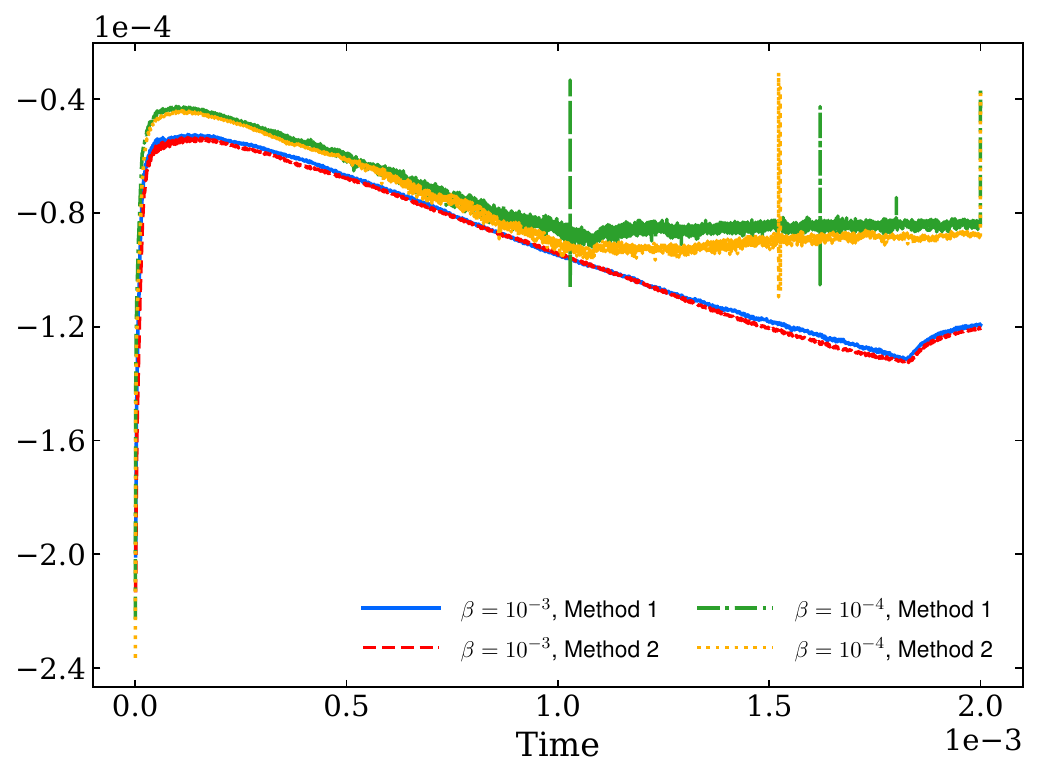}
		\caption{Example~\ref{ex:jet}: $\log\rho$ at $T=0.002$ computed by Method~1 for $\beta=10^{-3}$ (left) and $\beta=10^{-4}$ (center); fully discrete global entropy residuals \eqref{eq:num-entropy-residual} for Methods~1 and~2 in both cases, with $\xi=0$ (right).}
		\label{fig:num-jet23}
	\end{figure}
\end{numexample}

\section{Conclusions}
\label{sec:con}

In this work, we have established a fully discrete weak-to-strong (W2S) multi-entropy stability theory for arbitrarily high-order finite-volume and discontinuous Galerkin schemes approximating ideal compressible MHD on general polytopal meshes. Because thermodynamic entropies and their dual entropy variables are defined only on the admissible state space $\G$, physical admissibility ($\rho>0$, $p>0$) constitutes an indispensable prerequisite for discrete entropy stability. The framework developed here resolves the long-standing difficulty of reconciling discrete entropy dissipation with physical admissibility, the magnetic solenoidal constraint, and the nonconservative Godunov--Powell coupling, ensuring that a single numerical update simultaneously satisfies discrete entropy inequalities for any prescribed finite family of convex Harten entropy pairs.

Our cell-average weak stability analysis establishes two complementary criteria based on convex decompositions and Bregman relative entropies. Crucially, this analysis reveals the exact mechanism by which magnetic divergence affects fully discrete entropy stability: normal magnetic jumps across cell interfaces introduce a leading-order linear defect into the classical relative entropy flux, which the Godunov--Powell correction eliminates, allowing the corrected flux to be bounded by the relative entropy, while locally divergence-free (LDF) approximations cooperatively cancel the boundary corrections and remove an otherwise sign-indefinite entropy residual. Building upon this foundation, the W2S lifting operator unifies physical admissibility and simultaneous multi-entropy bounds into a single cellwise scaling limiter that preserves updated cell averages and the divergence-free magnetic involution, with entropy factors determined by scalar root solves when limiting is active.

High-order temporal accuracy is achieved via variable-step SSP multistep methods, whose convex decomposition into forward Euler steps allows the multi-entropy bounds to propagate under the SSP coefficient and step-size conditions of Theorem~\ref{thm:time-ssp-multistep}. The W2S lifting limiter is executed only once per complete time step, which avoids the overhead of intermediate substage limiting and circumventing temporal order reduction. By establishing the first fully discrete unification of physical admissibility, magnetic divergence control, and multi-entropy stability at arbitrary order, this work provides a rigorous foundation for simulating extreme plasma flows in near-vacuum, hypersonic, and strongly magnetized regimes, while offering a potential framework for broader constrained nonconservative hyperbolic systems.

\appendix
\section{Convexity and compatibility of the Harten entropies}
\label{app:harten}

Throughout this appendix, all derivatives are evaluated with respect to the conservative variable vector $\bm U\in\G$. For any scalar function $g\in C^2(\G)$ and direction $\bm Z\in\R^8$, we denote the first and second Fr\'echet derivatives by
\begin{equation*}
	Dg(\bm U)[\bm Z]=\nabla g(\bm U)^\top\bm Z,
	\qquad
	D^2g(\bm U)[\bm Z,\bm Z]
	=\bm Z^\top\nabla^2g(\bm U)\bm Z,
\end{equation*}
omitting the base state $\bm U$ when clear from context. For a vector-valued mapping $\bm G$, $D\bm G(\bm U)$ denotes its Jacobian matrix acting on $\bm Z$ as $D\bm G(\bm U)[\bm Z]$.

\begin{lemma}[Positive definiteness of the entropy Hessian]
	\label{lem:harten-convexity}
	Let $f\in C^2(\R)$ satisfy Harten's conditions \eqref{eq:intro-harten-conditions}, namely $f'>0$ and $f''<f'/\gamma$. Then the generalized entropy function $\eta_f(\bm U)=-\rho f(s)$ has a positive definite Hessian $\nabla^2\eta_f(\bm U)>0$ on the physical domain $\G$. Consequently, for any compact convex subset $\Kc\subset\G$, there exist positive constants $0<c_{\Kc,f}\le C_{\Kc,f}<\infty$ such that the relative entropy satisfies the two-sided quadratic bounds
	\begin{equation*}
		c_{\Kc,f}\norm{\bm U-\bm W}^2
		\le \eta_f(\bm U)-\eta_f(\bm W)
		-\nabla\eta_f(\bm W)^\top(\bm U-\bm W)
		\le C_{\Kc,f}\norm{\bm U-\bm W}^2,
	\end{equation*}
	for all $\bm U,\bm W\in\Kc$.
\end{lemma}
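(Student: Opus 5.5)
The plan is to reduce the Hessian of $\eta_f=-\rho f(s)$ to that of the Euler part plus a decoupled magnetic contribution, using a change of variables. The entropy $s=\log p-\gamma\log\rho$ depends on $\bm U$ only through $(\rho,\bm m,E-|\bm B|^2/2)$, i.e. through the hydrodynamic conserved variables with internal-plus-kinetic energy $\widetilde E:=E-|\bm B|^2/2$. The map $\bm U\mapsto(\rho,\bm m,\widetilde E,\bm B)$ is a diffeomorphism of $\G$ onto $\{\rho>0,\ \widetilde E-|\bm m|^2/(2\rho)>0\}\times\R^3$, but it is not linear, so convexity does not transfer for free. I would therefore compute the second derivative directly. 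Write $g(\rho,\bm m,\widetilde E):=-\rho f(s)$, the Euler Harten entropy. For a direction $\bm Z=(z_\rho,\bm z_m,\bm z_B,z_E)$, the induced variation is $\widetilde z_E=z_E-\bm B\cdot\bm z_B$, and the second variation of $\widetilde E$ is $-|\bm z_B|^2$. The chain rule then gives
\[
D^2\eta_f[\bm Z,\bm Z]=D^2g[\widetilde{\bm Z},\widetilde{\bm Z}]+\partial_{\widetilde E}g\,\bigl(-|\bm z_B|^2\bigr),
\]
where $\widetilde{\bm Z}=(z_\rho,\bm z_m,\widetilde z_E)$.

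Step one is $\partial_{\widetilde E}g=-\rho f'(s)\,\partial_{\widetilde E}s=-\rho f'(s)(\gamma-1)/p<0$, since $f'>0$. The magnetic term therefore contributes $(\gamma-1)\rho f'(s)|\bm z_B|^2/p$, which is positive whenever $\bm z_B\ne0$.

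Step two is to show $D^2g$ is positive definite on $\R^5$ under $f'>0$, $f''<f'/\gamma$. This is the Harten--Lax--Levermore--Morokoff criterion, and it is the main computational obstacle. I would verify it by passing to primitive-type variables $(\rho,\bm v,s)$ or using $\tau=1/\rho$. Since $g$ is positively homogeneous of degree one in $(\rho,\bm m,\widetilde E)$ (as $s$ is invariant under scaling), its Hessian is degenerate along the radial direction unless one notes that $s$ is homogeneous of degree $0$ only for the scaling $(\rho,\bm m,\widetilde E)\to t(\cdot)$ combined with $\rho^{\gamma}$. Indeed $s(t\cdot)=s+(1-\gamma)\log t$, so $g$ is not homogeneous and no degeneracy arises.

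The cleanest route is the standard one. Write the Hessian quadratic form as a sum of squares:
\[
\frac{\rho f'}{p}\Bigl[\tfrac{\gamma-1}{\rho}\,\rho|\delta\bm v|^2\text{-type term}\Bigr]+\frac{\rho}{\gamma}\bigl(f'-\gamma f''\bigr)(\delta s)^2+\frac{f'}{\gamma}\,(\cdots)^2 .
\]
This is obtained by changing to the variation triple $(\delta\rho,\delta\bm v,\delta s)$, whose map from $\widetilde{\bm Z}$ is invertible. I would cite \cite{HartenLaxLevermoreMorokoff1998} for this identity and only check that each coefficient is positive exactly when $f'>0$ and $f'-\gamma f''>0$. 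Combining both steps, $D^2\eta_f[\bm Z,\bm Z]>0$ for $\bm Z\ne0$: if $\widetilde{\bm Z}\ne0$ the first term is positive and the second nonnegative, and if $\widetilde{\bm Z}=0$ then $\bm z_B\ne0$ (otherwise $z_E=0$ too).

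For the quadratic bounds, on a compact convex $\Kc\subset\G$ the Hessian is continuous, since $f\in C^2$ and $s$ is smooth on $\G$. Its smallest and largest eigenvalues therefore attain a positive minimum $2c_{\Kc,f}$ and a finite maximum $2C_{\Kc,f}$. Taylor's formula with integral remainder along the segment $[\bm W,\bm U]\subset\Kc$ (convexity of $\Kc$) gives
\[
\eta_f(\bm U)-\eta_f(\bm W)-\nabla\eta_f(\bm W)^\top(\bm U-\bm W)=\int_0^1(1-t)\,D^2\eta_f(\bm W+t(\bm U-\bm W))[\bm U-\bm W,\bm U-\bm W]\,\dd t,
\]
which is sandwiched by the eigenvalue bounds.
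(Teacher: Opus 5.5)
Your argument is correct, but it is organized differently from the paper's. The paper differentiates $\eta_f=-\rho f(s)$ twice directly in all eight conserved variables. It completes the square in $a=z_\rho/\rho$ and $b=D\varepsilon[\bm Z]/\varepsilon$ to get the explicit identity \eqref{eq:app-harten-hessian}, and reads off positive definiteness and the kernel argument by hand. You instead factor $\eta_f$ through $(\rho,\bm m,\widetilde E)$ with $\widetilde E=E-|\bm B|^2/2$. The chain rule then gives the Euler Harten Hessian at $\widetilde{\bm Z}$ plus $\frac{(\gamma-1)\rho f'}{p}|\bm z_B|^2$, and you import positive definiteness of the Euler block from \cite{HartenLaxLevermoreMorokoff1998}.

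The two agree term by term. The paper's $b$ already contains $\widetilde z_E=z_E-\bm B\cdot\bm z_B$, and its last term $\rho f'|\bm z_B|^2/\varepsilon$ is exactly your magnetic term. Hence \eqref{eq:app-harten-hessian} with $\bm z_B=\bm 0$ is precisely the Euler identity you cite. Your route is more modular: it shows the magnetic field enters only through a term needing $f'>0$, so MHD convexity is inherited from Euler convexity. The paper's route is self-contained and shows explicitly where $f'-\gamma f''>0$ and $\gamma>1$ are used.

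Two cautions. First, your displayed sum of squares is only schematic. The velocity coefficient should be $(\gamma-1)\rho^2f'/p$, and the unspecified square is $\frac{(\gamma-1)\rho f'}{\gamma}b^2$ with $b=\delta p/p=\delta s+\gamma\,\delta\rho/\rho$. So that step genuinely rests on the citation. You should also check that the cited Euler result uses the same normalization $s=\log p-\gamma\log\rho$, since rescaling $s$ changes the condition $f''<f'/\gamma$.

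Second, drop the homogeneity digression, because failure of degree-one homogeneity does not exclude degeneracy. The borderline choice $f(s)=e^{s/\gamma}$, where $f'-\gamma f''=0$, gives $g=-p^{1/\gamma}$. This $g$ is not homogeneous of degree one, yet its Hessian is degenerate along directions with $\delta\bm v=\bm 0$, $\delta p=0$, $\delta\rho\neq0$.
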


\begin{proof}
	Fix $\bm U\in\G$ and let $\varepsilon:=\rho e=p/(\gamma-1)>0$ denote the internal energy density. For an arbitrary perturbation direction $\bm Z=(z_\rho,\bm z_m^\top,\bm z_B^\top,z_E)^\top\in\R^8$, we introduce the dimensionless differentials
	\begin{equation*}
		a:=\frac{z_\rho}{\rho},\qquad
		b:=\frac{D\varepsilon[\bm Z]}{\varepsilon}
		=\frac{z_E-\bm v\cdot\bm z_m+
			\tfrac12|\bm v|^2z_\rho-\bm B\cdot\bm z_B}
		{\varepsilon}.
	\end{equation*}
	From the thermodynamic relation $s=\log((\gamma-1)\varepsilon)-\gamma\log\rho$, we have $Ds[\bm Z]=b-\gamma a$. Direct differentiation of the internal energy density $\varepsilon = E - \frac{|\bm m|^2}{2\rho} - \frac{|\bm B|^2}{2}$ gives
	\begin{equation*}
		D^2\varepsilon[\bm Z,\bm Z]
		=-\frac{|\bm z_m-\bm v z_\rho|^2}{\rho}-|\bm z_B|^2,
		\qquad
		D^2s[\bm Z,\bm Z]
		=\frac{D^2\varepsilon[\bm Z,\bm Z]}{\varepsilon}
		-b^2+\gamma a^2.
	\end{equation*}
	Differentiating $\eta_f(\bm U)=-\rho f(s)$ twice, substituting $D^2s[\bm Z,\bm Z]$, and completing the square in $a$ and $b$, we obtain the quadratic form decomposition
	\begin{equation}\label{eq:app-harten-hessian}
		\begin{aligned}
			D^2\eta_f[\bm Z,\bm Z]
			={}&\frac{\rho}{\gamma}
			\bigl[(f'-\gamma f'')(b-\gamma a)^2
			+(\gamma-1)f'b^2\bigr]\\
			&+\frac{f'}{\varepsilon}|\bm z_m-\bm v z_\rho|^2
			+\frac{\rho f'}{\varepsilon}|\bm z_B|^2,
		\end{aligned}
	\end{equation}
	where $f'$ and $f''$ are evaluated at $s$. Under Harten's conditions \eqref{eq:intro-harten-conditions} ($f'>0$ and $f'-\gamma f''>0$), together with $\rho>0$, $\varepsilon>0$, and $\gamma>1$, every coefficient in \eqref{eq:app-harten-hessian} is strictly positive. If $D^2\eta_f[\bm Z,\bm Z]=0$, the non-negativity of each squared term forces
	\begin{equation*}
		\bm z_B=\bm 0, \qquad \bm z_m-\bm v z_\rho=\bm 0, \qquad b=0, \qquad b-\gamma a=0.
	\end{equation*}
	Because $\gamma>0$, the relations $b=0$ and $b-\gamma a=0$ imply $a=z_\rho/\rho=0$, whence $z_\rho=0$. This in turn yields $\bm z_m=\bm v z_\rho=\bm 0$. Finally, the condition $b\varepsilon = z_E - \bm v\cdot\bm z_m + \frac{1}{2}|\bm v|^2z_\rho - \bm B\cdot\bm z_B = 0$ collapses to $z_E=0$. Thus $\bm Z=\bm 0$, proving that $\nabla^2\eta_f(\bm U)$ is strictly positive definite on $\G$.
	
	On any compact convex subset $\Kc\subset\G$, the extreme eigenvalues of $\nabla^2\eta_f(\bm U)$ are continuous in $\bm U$ and strictly positive, hence bounded from above and below by constants $0<c_{\Kc,f}\le C_{\Kc,f}<\infty$. The quadratic relative-entropy bounds then follow by Taylor's theorem in integral form,
	\begin{equation*}
		\eta_f(\bm U)-\eta_f(\bm W)-\nabla\eta_f(\bm W)^\top(\bm U-\bm W)
		=\int_0^1 (1-t) D^2\eta_f\bigl(\bm W+t(\bm U-\bm W)\bigr)[\bm U-\bm W,\bm U-\bm W]\,\mathrm{d}t,
	\end{equation*}
	which completes the proof.
\end{proof}

Direct differentiation of $\eta_f(\bm U)=-\rho f(s)$ with respect to the conservative variables $\bm U=(\rho,\bm m^\top,\bm B^\top,E)^\top$ yields the entropy variables $\bm V_f:=\nabla\eta_f(\bm U)$ and their contraction with the Godunov--Powell source vector $\bm S(\bm U)$:
\begin{equation}\label{eq:app-entropy-variables}
	\begin{aligned}
		\kappa_f&:=\frac{(\gamma-1)\rho f'(s)}p,\\
		\bm V_f
		&=\left(\gamma f'-f-\tfrac12\kappa_f|\bm v|^2,\,
		\kappa_f\bm v^\top,\,
		\kappa_f\bm B^\top,\,-\kappa_f\right)^\top,\\
		\bm V_f^\top\bm S&=\kappa_f\,\bm v\cdot\bm B.
	\end{aligned}
\end{equation}
For any spatial unit normal vector $\bm n$, adopting the directional embedding convention of Section~\ref{sec:md}, differentiating the directional entropy flux $q_{f,\bm n}=\eta_f\bm v\cdot\bm n$ and the directional physical flux $\bm F_{\bm n}$ along the increment $\bm Z$ yields the identity
\begin{equation*}
	Dq_{f,\bm n}(\bm U)[\bm Z]
	=\bm V_f(\bm U)^\top D\bm F_{\bm n}(\bm U)[\bm Z]
	+\kappa_f(\bm v\cdot\bm B)(\bm z_B\cdot\bm n),
	\qquad \forall\,\bm Z\in\R^8.
\end{equation*}
Since $DB_{\bm n}(\bm U)[\bm Z]=\bm z_B\cdot\bm n$ and $\phi_f(\bm U) = \bm V_f^\top\bm S = \kappa_f(\bm v\cdot\bm B)$, this relation establishes the multidimensional Godunov--Powell compatibility identity \eqref{eq:md-powell-compatibility}. Restricting this identity to the coordinate direction $\bm n=\bm e_1$ and increments with $z_{B_1}=0$ immediately recovers the classical one-dimensional compatibility relation \eqref{eq:1d-compatibility}. Combined with Taylor's theorem on compact convex subsets of $\G$, these compatibility relations furnish the uniform quadratic relative-flux bounds used throughout Sections~\ref{sec:1d} and~\ref{sec:md}.

\end{document}